\documentclass[11pt]{amsart}
\usepackage[margin=3cm]{geometry} 
\usepackage{amsmath}
\usepackage{subcaption}
\usepackage{amssymb}
\usepackage{amsthm}
\usepackage{float}
\usepackage{ulem}
\usepackage{xcolor}
\usepackage[utf8x]{inputenc}
\usepackage{comment}
\usepackage{framed}
\usepackage{caption}
\newcommand{\abs}[1]{\left\vert#1\right\vert}

\newcommand{\eps}{\varepsilon}

\DeclareMathAlphabet{\mathbbb}{U}{bbold}{m}{n}

\newtheorem{thm}{Theorem}[section]

\newtheorem{claim}[thm]{Claim}
\newtheorem{lem}[thm]{Lemma}
\newtheorem{prop}[thm]{Proposition}
\theoremstyle{definition}

\theoremstyle{remark}
\newtheorem{rem}[thm]{Remark}

\numberwithin{equation}{section}
\usepackage{graphicx}

\usepackage{enumerate}

\newcommand{\derive}[2]{\dfrac{\bd #1}{\bd#2}}

\newcommand{\twoarray}[2]{\begin{aligned}&{#1}\\&{#2}\end{aligned}}
\newcommand{\threearray}[3]{\begin{aligned}&{#1}\\&{#2}\\&{#3}\end{aligned}}

\newcommand{\matrice}{\begin{pmatrix}}
\newcommand{\ok}{\end{pmatrix}}

\newcommand{\scal}[2]{\langle{#1},{#2}\rangle}

\newcommand{\bd}{\partial}

\begin{document}
\title[]{On the isoperimetric inequality for the first positive Neumann eigenvalue on the sphere}
%\thanks{The first author acknowledges support of IDAM-GNAMPA.  The second author acknowledges support of INDAM-GNSAGA and of the the project ``Perturbation
%problems and asymptotics for elliptic differential equations: variational and potential theoretic methods'' funded by the MUR Progetti di Ricerca di Rilevante Interesse Nazionale (PRIN) Bando 2022 grant 2022SENJZ3.}

\author[Provenzano]{Luigi Provenzano}
\address{Dipartimento di Scienze di Base e Applicate per l'Ingegneria, Universit\`a di Roma ``La Sapienza'', Via Scarpa 12 - 00161 Roma, Italy, e-mail: {\sf luigi.provenzano@uniroma1.it}.}
\author[Savo]{Alessandro Savo}
\address{Dipartimento di Scienze di Base e Applicate per l'Ingegneria, Universit\`a di Roma ``La Sapienza'', Via Scarpa 12 - 00161 Roma, Italy, e-mail: {\sf alessandro.savo@uniroma1.it}.}

\begin{abstract}
We prove that geodesic disks are the unique maximizers of the first positive Neumann eigenvalue among all simply connected domains in $\mathbb S^2$ with prescribed area. We then show that geodesic disks are no longer maximisers for the class of spherical annuli.
\end{abstract}

\keywords{Isoperimetric inequality, Neumann eigenvalue, prescribed level lines, Neumann to Steklov, Uniformization Theorem}
\subjclass{35P15, 58J50}

\thanks{The authors would like to thank the Isaac Newton Institute for Mathematical Sciences, Cambridge, for support and hospitality during the programme Geometric spectral theory and applications, where part of the work on this paper was undertaken. This work was supported by EPSRC grant EP/Z000580/1. The authors acknowledge the support of the INdAM GNSAGA group. The first author acknowledges financial support from the project ``Perturbation problems and asymptotics for elliptic differential equations: variational and potential theoretic methods'' funded by the European Union – Next Generation EU and by MUR-PRIN-2022SENJZ3 and from the project ``Analisi Geometrica e Teoria  Spettrale su varietà Riemanniane ed Hermitiane'' of the INdAM GNSAGA}

\maketitle

\section{Introduction}

In this paper, we address the following question: does a spherical cap always maximize the second (i.e., first nontrivial) Neumann eigenvalue among all simply connected domains on the sphere with fixed area? Here we give a positive answer to this question.

\begin{thm}\label{main}
Let $\Omega$ be a simply connected domain of $\mathbb S^2$. Then
\begin{equation}\label{mainresult}
\mu_2(\Omega)\leq\mu_2(\Omega^{\star}),
\end{equation}
where $\Omega^{\star}$ is a geodesic disk with $|\Omega|=|\Omega^{\star}|$ and $\mu_2$ denotes the first positive Neumann eigenvalue. Equality holds if and only if $\Omega=\Omega^{\star}$.
\end{thm}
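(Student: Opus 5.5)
Since $\Omega$ is simply connected, the Uniformization Theorem gives a conformal map; the plan is to combine it with the Szegő–Weinberger strategy, using test functions built from the first nontrivial Neumann eigenfunction of the cap $\Omega^\star$. Write $\Omega^\star$ as the cap of radius $r$ centred at the north pole. Its eigenvalue $\mu_2(\Omega^\star)$ has multiplicity two, with eigenfunctions $g(\rho)\cos\theta$ and $g(\rho)\sin\theta$ in geodesic polar coordinates $(\rho,\theta)$. Here $g$ is increasing on $[0,r]$, $g(0)=0$ and $g'(r)=0$. Extend $g$ by the constant $g(r)$ for $\rho\ge r$.

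For a point $p$ and a unit vector $v$ at $p$, define the test function $u_{p,v}(x)=G(d(p,x))\,\langle \exp_p^{-1}x/|\exp_p^{-1}x|, v\rangle$. For a coordinate-free version, replace the unit vector by an ambient projection.

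\textbf{Step 1 (orthogonality).} The first step is a topological centre-of-mass argument, in the style of Weinberger and Ashbaugh–Benguria on the sphere. I will show there is a point $p$ such that $\int_\Omega G(d(p,x))\,\nabla_p d(p,x)\,dx=0$, i.e.\ all the $u_{p,v}$ have zero mean. This uses a Brouwer degree argument for the vector field $p\mapsto\int_\Omega G(d(p,x))\nabla d$ on the sphere. The difficulty is that on $\mathbb S^2$ one must handle antipodal points and domains larger than a hemisphere. Simple connectivity enters here: the uniformization/conformal map onto the cap can be composed with a Möbius transformation (Hersch's trick) so that the conformal balancing condition holds.

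\textbf{Step 2 (variational bound).} Summing the Rayleigh quotients over an orthonormal pair $v_1,v_2$ gives
\[
\mu_2(\Omega)\int_\Omega G(\rho)^2\,dx\le\int_\Omega \Big(G'(\rho)^2+\frac{G(\rho)^2}{\sin^2\rho}\Big)dx .
\]
The goal is to show that $B(\rho)=G'^2+G^2/\sin^2\rho$ is decreasing and $G^2$ is increasing. A rearrangement (bathtub) argument then replaces $\Omega$ by $\Omega^\star$, and equality holds for the cap. Monotonicity of $B$ fails on the sphere for large caps; this is the main obstacle. The known Ashbaugh–Benguria proof only covers caps in a hemisphere.

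To overcome this I would exploit simple connectivity through a conformal map $\Phi:\Omega\to\Omega^\star$, balanced by Möbius maps. I would use $u=g\circ\Phi$ directly instead of distance functions. The Dirichlet integral is conformally invariant, so $\int_\Omega|\nabla u|^2=\int_{\Omega^\star}|\nabla g|^2=\mu_2(\Omega^\star)\int_{\Omega^\star}g^2$.

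\textbf{Step 3 (denominator).} It then remains to prove that the denominator $\int_\Omega (g\circ\Phi)^2$ is at least $\int_{\Omega^\star}g^2$, summed over both components. Writing it as $\int_{\Omega^\star}g^2 J$ with $J$ the Jacobian of $\Phi^{-1}$ and $\int J=|\Omega^\star|$, this needs control of the level-set distribution of $J$. I would approach it with a prescribed-level-line argument: choose $\Phi$ among conformal maps so that the sublevel sets $\{g^2<t\}$ carry area at most their area in $\Omega^\star$. Then the bathtub inequality gives the claim, since $g^2\cos^2+g^2\sin^2=g^2$ is radial and increasing.

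\textbf{Equality.} Equality forces $J\equiv1$ a.e. on relevant sets, hence $\Phi$ is an isometry and $\Omega=\Omega^\star$.
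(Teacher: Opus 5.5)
After you discard the distance-function test functions, your argument is Szeg\"o--Bandle conformal transplantation of the cap eigenfunction, and it has a genuine gap in Step 3. You have replaced the failed monotonicity of $B$ with another monotonicity requirement that also fails. This is the known obstruction behind the area restrictions in Bandle's ($|\Omega|\le 2\pi$) and Langford--Laugesen's ($|\Omega|\le\frac{16}{17}\cdot 4\pi$) versions of this method.

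Your claim that the cap profile $g$ is increasing on $[0,r]$ is false for large caps. The radial equation reads $(\sin\rho\,g')'=\big(\frac{1}{\sin\rho}-\mu_2(\Omega^{\star})\sin\rho\big)g$. At $\rho=r$ one has $g'(r)=0<g(r)$, so $g'<0$ just inside $\partial\Omega^{\star}$ whenever $\mu_2(\Omega^{\star})\sin^2 r<1$. This holds for $r$ close to $\pi$: there $\mu_2(\Omega^{\star})\to 2$, and the profile tends to $\sin\rho$, which decreases on $(\pi/2,\pi)$.

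Your bathtub step needs this monotonicity. Let $\tilde m(\rho)$ be the $J$-mass of $B(p^{\star},\rho)$. The spherical isoperimetric inequality $L^2\ge A(4\pi-A)$, combined with Cauchy--Schwarz along the curves $\Phi^{-1}(\partial B(p^{\star},\rho))$, already gives $\tilde m(\rho)\le |B(p^{\star},\rho)|$ for \emph{every} conformal $\Phi$ (Bandle's lemma). So this is not something you arrange by choosing $\Phi$, and you could not anyway. The M\"obius freedom has only two essential real parameters, since rotations about $p^{\star}$ leave $\int g^2J$ unchanged, and both are used up by the balancing conditions.

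Integrating by parts, $\tilde m\le|B(p^{\star},\rho)|$ yields $\int_{\Omega^{\star}}g^2J\ge\int_{\Omega^{\star}}g^2$ only when $g^2$ is nondecreasing. When $g$ dips near the boundary, the sublevel sets $\{g^2<t\}$ contain an outer annulus. The isoperimetric information bounds the $J$-mass of that annulus from \emph{below} by its area, which is the wrong direction. Nothing then controls the sign of $\int g^2(J-1)$, and your equality discussion inherits the same defect.

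The missing idea, and the paper's way past this, is to stop transplanting the cap's profile and instead use, for each pole $p$, the radial profile that is optimal for $\Omega$ itself. The paper takes functions constant on the level lines of the Dirichlet Green function $\psi_p$. These are made admissible by the flux-one Aharonov--Bohm potential $A_p=-2\pi\star d\psi_p$ together with gauge invariance, $\lambda_k(\Omega,A_p)=\mu_k(\Omega)$. In disk coordinates centred at the preimage of $p$, after the gauge change, they are just $v(r)e^{-i\theta}$.

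Parametrizing level lines by enclosed area $a$, the Rayleigh quotient becomes $\int_0^M(G_pf'^2+4\pi^2f^2/G_p)\,da / \int_0^Mf^2\,da$. The isoperimetric input is $G_p\ge G^{\star}=a(4\pi-a)$, the same inequality Bandle uses. The comparison $\kappa_1(G_p)\le\kappa_1(G^{\star})=\mu_2(\Omega^{\star})$ then follows from Feynman--Hellmann along $G_t=(1-t)G^{\star}+tG_p$ and the Riccati bound $|G_tf_t'/f_t|<2\pi$, which needs no monotonicity of any eigenfunction.

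The price is that the test function now depends on the pole. Orthogonality to the ground state is therefore obtained by a Brouwer argument for $V(q)=\int_{\mathbb D}v_qe^{i\theta}\rho_q^2\,dv_E$. Its boundary values $V(q)=-\sqrt{M}\,q$ come from the Neumann-to-Steklov limit of the radialized weight as $q\to\partial\mathbb D$.
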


%For values of the area of $\Omega$ not exceeding $2\pi$ (half of the area of the sphere) %the inequality was proved by Bandle \cite{}

\smallskip

{\bf History of the problem.}  Isoperimetric inequalities of type \eqref{mainresult} are classical and have been studied since the times of Szeg\"o, around 1950. They are sometimes called Bandle-Szeg\"o-type inequalities. In \cite{szego_1} Szeg\"o proves  inequality \eqref{mainresult} when $\Omega$ is a  plane domain. Bandle, in the classical paper \cite{bandle2}, extends \eqref{mainresult} to simply connected Riemannian surfaces of area $A$ with Gaussian curvature bounded above by $K$, under the additional assumption $2\pi-KA\geq 0$. If $K\leq 0$ there is no restriction on $A$, while if $K>0$ the requirement is that $A\leq\frac{2\pi}{K}$. Note that a sphere of constant curvature $K$ has area $\frac{4\pi}{K}$. A consequence of the inequality of Bandle \cite{bandle2}  concerns spherical domains: the second Neumann eigenvalue of a spherical cap is maximal among all simply connected domains of $\mathbb S^2$ of fixed area $A$ {\it not exceeding} $2\pi$ (half the area of $\mathbb S^2$). The proof of \cite{bandle2} relies on conformal transplantation  in the spirit of Szeg\"o \cite{szego_1}. From \cite{bandle2} it follows that the inequality holds also for any simply connected domain of the hyperbolic plane $\mathbb H^2$.

\smallskip

Bandle's result  \cite{bandle2} on $\mathbb S^2$ was the best known until very recently. In \cite{lan_lau}, Langford and Laugesen were able to improve on  the restriction $A\leq 2\pi$ and go ``beyond the hemisphere'' by allowing values of the area satisfying $A\leq 4\pi c$, where $c=16/17\approx 0.941$. In the case of the sphere, this implies the isoperimetric inequality \eqref{mainresult} for simply connected domains of area up to about $94\%$ of the area of the sphere. The proof is a refinement of the approach of \cite{bandle2}, based on conformal transplantation, but contains important improvements and detours from the original proof. %Theorem \ref{main} was conjectured in \cite{lan_lau}.
 In the same paper, it was conjectured that \eqref{mainresult} would in fact hold for all simply connected spherical domains (see \cite[Open problems]{lan_lau}). This conjecture motivated the present work. 

\smallskip

A related question comes up naturally: % is the restriction to simply connected domains really necessary? 
 are there counterexamples if the boundary is not connected? 

\smallskip

 We remark that the isoperimetric inequality  \eqref{mainresult}   for domains of $\mathbb R^2$ and $\mathbb H^2$ holds also without restrictions on the topology, thanks to an argument due to Weinberger \cite{weinberger}, which is valid in the general non-simply connected case (and even in the arbitrary dimension $n$). 
The same proof establishes the inequality for all domains in  $\mathbb S^2$ contained in a hemisphere, see Ashbaugh-Benguria \cite{ash_beng}.  The assumption of being included in a hemisphere has been
weakened in Bucur, Martinet, Nahon \cite{bucur_sphere_2}.  In connection with these results, numerical investigations have been carried out in \cite{martinet_sphere}, providing numerical evidence and further insight in the structure of the problem. In the recent paper by Bucur, Laugesen, Martinet and Nahon \cite{bucur_gafa} the authors show that there exist multiply connected spherical domains (precisely, with five boundary components) that have  second eigenvalue strictly larger than that of the spherical cap with the same area. These examples consist of disks of radii $R>0.71   \pi$ with four small ellipses of large eccentricity removed. The elliptical holes are symmetrically
placed at the latitude of the hot spots of a second eigenfunction of the disk. So, the examples in \cite{bucur_gafa} have five boundary components. In \cite{bucur_gafa} the authors also describe another class of examples (``helmet-like domains''), providing numerical evidences and heuristic arguments for the failure of \eqref{mainresult}. These domains have four boundary components.  

\smallskip

At any rate, \eqref{mainresult} can't hold in general. One could then ask if there are counterexamples with a smaller number of boundary components, in particular: can \eqref{mainresult} be extended to spherical annuli (i.e., domains with only two boundary components)?  We actually tried to extend Theorem \ref{main} to that case, and as a first step we tested the validity of \eqref{mainresult}
on the family of rotationally symmetric annuli. A bit surprisingly, we found out that symmetric bands around the equator, with area sufficiently close to the area of $\mathbb S^2$, contradict \eqref{mainresult}.

%The assumption of being included in a  hemisphere has been weakened in Bucur, Martinet, Nahon \cite{bucur_sphere_2}: if the domain has area smaller than $|\mathbb S^n|/2$ and included in the complement of a spherical cap of the same area, inequality \eqref{mainresult} holds.All these results show that some additional conditions, such as the hemisphere condition in \cite{ash_beng}, or the conditions in \cite{bucur_sphere_2}, are necessary for the validity of the inequality on general domains.

 More precisely, we have the following theorem, which shows that the topological assumption in Theorem \ref{main} is the best possible.

\begin{thm}\label{thm_counter}
Let $\Omega_\eps$ be the annulus which in polar coordinates $(r,\theta)$ is described by $\Omega_\eps=\{(r,\theta):\eps<r<\pi-\eps\}$ and let $\Omega_\eps^\star$ be the geodesic disk of the same area. Then, as $\eps\to 0^+$
$$
\mu_2(\Omega_\eps)-\mu_2(\Omega_\eps^\star)=3\eps^4|\log\eps|+o(\eps^4|\log\eps|).
$$
\end{thm}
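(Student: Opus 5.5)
We would compute both eigenvalues explicitly by separation of variables and expand them asymptotically as $\eps\to0^+$. Both $\Omega_\eps$ and $\Omega_\eps^\star$ are rotationally symmetric about the same axis. So every Neumann eigenfunction can be taken of the form $f(r)\cos(k\theta)$ or $f(r)\sin(k\theta)$, where $f$ solves the associated Legendre equation
$$
f''+\cot r\,f'+\Big(\mu-\frac{k^2}{\sin^2 r}\Big)f=0,
$$
with $f'=0$ at the boundary radii ($r=\eps$ and $r=\pi-\eps$ for the band, $r=R$ for the cap).

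The first step is to fix the parameters. From $|\Omega_\eps|=4\pi\cos\eps$ and $|\Omega^\star_\eps|=2\pi(1-\cos R)$ we get $\cos R=1-2\cos\eps$, hence $\pi-R=\sqrt2\,\eps+O(\eps^3)$. Thus $\Omega_\eps^\star$ is the sphere minus one small cap of radius $\delta\approx\sqrt2\eps$, and $\Omega_\eps$ is the sphere minus two antipodal caps of radius $\eps$. In both cases $\mu_2\to2$, and the limiting eigenfunctions are linear coordinate functions.

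The second step is to identify which modes realize $\mu_2$. We would compare the branch $k=0$ (perturbing $\cos r$) with the branch $k=1$ (perturbing $\sin r\cos\theta$), for both domains. For the cap we expect $k=1$ to be the lowest. For the band, the $k=0$ branch $\cos r$ has $|\nabla u|=\sin r$ of order $\eps$ at the holes, while the $k=1$ branch has $|\nabla u|=|\cos r|\approx1$ there. Hence the $k=1$ eigenvalue is pushed down by order $\eps^2$, and the $k=0$ one only by order $\eps^4$. Therefore $\mu_2$ is the $k=1$ eigenvalue for both domains, and this must be checked carefully at the level of the expansions below.

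The third step is to expand the $k=1$ eigenvalue in each domain. We write the solution as a combination of the associated Legendre functions $P^1_\nu(\cos r)$ and $Q^1_\nu(\cos r)$, or of $P^1_\nu(\pm\cos r)$, with $\mu=\nu(\nu+1)$. We then impose the Neumann conditions and expand the resulting transcendental equation near $\nu=1$. For the band we can exploit symmetry under $r\mapsto\pi-r$: the eigenfunction is even or odd, so it suffices to impose $f'(\pi/2)=0$ or $f(\pi/2)=0$ together with $f'(\eps)=0$.

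The leading correction has the heuristic size $-\mu\sim 2|\nabla u|^2\,|\text{hole}|$, with $|\nabla u|\approx 1$ at each pole. It equals $\mathrm{const}\cdot(2\pi\eps^2)$ for the band and $\mathrm{const}\cdot(\pi\delta^2)=\mathrm{const}\cdot(2\pi\eps^2)$ for the cap. The total hole areas coincide, so these leading terms cancel. The difference therefore comes from the next order. In two dimensions, the singular part of $Q^1_\nu$ near the pole (behaving like $1/r$ plus $r\log r$ corrections), together with the $\nu$-derivative of $P^1_\nu$, produces $\eps^4|\log\eps|$ terms. The coefficient of these terms depends on how the area is split: one hole of radius $\sqrt2\eps$ versus two holes of radius $\eps$, giving $\delta^4=4\eps^4$ against $2\eps^4$. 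We expect their difference to yield $3\eps^4|\log\eps|$, and we also need to track the $O(\eps^3)$ correction in $\delta$.

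The main obstacle is this second-order expansion. It requires precise asymptotics of $\frac{d}{dr}P^1_\nu(\cos r)$ and $\frac{d}{dr}Q^1_\nu(\cos r)$ near $r=0$, uniform for $\nu$ near $1$, including the logarithmic terms, and in particular the $\nu$-derivative of the digamma-type constants. We would first try to write the solution near $\nu=1$ as a perturbation of $\sin r$ via variation of parameters. This gives an explicit second solution $\sin r\int\frac{dr}{\sin^3 r}$ and explicit integrals for the $\mu$-derivative, avoiding special-function identities. We would then solve the boundary equation for $\mu$ by iteration, keeping all terms up to $o(\eps^4|\log\eps|)$.
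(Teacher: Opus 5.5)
Your plan follows the paper's proof. You separate variables, identify $\mu_2$ with the first $k=1$ eigenvalue on both domains, write the $k=1$ solutions via Legendre functions, and solve the Neumann condition near $\nu=1$, after an a priori bound $|\mu_2-2|\le C\eps^2$ has localized the root. The paper uses $p_\nu(r)=\sin r\,P_\nu'(\cos r)$ and the connection formula for $P_\nu(-x)$. With $\nu=1-\alpha$ this gives $p_\nu'(\pi-\eps)=-1+2\alpha\eps^{-2}+2\alpha\log\eps+O(\eps^2)$.

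Your variation-of-parameters variant works equally well. Near a pole, at distance $s$, the second solution $\sin r\int\sin^{-3}r\,dr$ is, after normalization and modulo multiples of $\sin r$, equal to $\frac1s-s\log s+O(s)$. The $s\log s$ term is what brings $\log\eps$ into the Neumann condition. The result is a shift $-\frac32\eps^2-\frac32\eps^4|\log\eps|$ per hole of radius $\eps$. This is doubled for the band, whose first $k=1$ eigenfunction is the even one. The difference is therefore $\frac32(4-2)\eps^4|\log\eps|$, exactly as you anticipated. The digamma constants and the $O(\eps^3)$ correction to $\delta$ only affect $O(\eps^4)$ terms, so you need not track them.

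The step whose justification is wrong is your mode comparison. Your first-order heuristic omits the mass term. A small Neumann geodesic hole of area $|h|$ shifts a simple eigenvalue by roughly $|h|(\mu_0u_0^2-2|\nabla u_0|^2)/\|u_0\|^2$.
\begin{itemize}
\item For $k=1$ the first term vanishes at the poles, so your estimate is right there.
\item For $k=0$, with $u_0=\cos r$, the mass term dominates, and that branch moves \emph{up} by order $\eps^2$, not down by $O(\eps^4)$.
\end{itemize}
For the band, the identity $(\mu-2)\int_\eps^{\pi-\eps}f\cos r\sin r\,dr=\sin^2\eps\,\bigl(f(\eps)-f(\pi-\eps)\bigr)$ gives $\mu\approx 2+3\eps^2$ on the $k=0$ branch.

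Your conclusion that $\mu_2$ is the $k=1$ eigenvalue still holds, even more comfortably. The paper, however, proves it for every $\eps$ without asymptotics. If $v_{02}$ is a $k=0$ eigenfunction, then $v_{02}'$ solves the $k=1$ equation with Dirichlet conditions at $\eps$ and $\pi-\eps$. Hence $\sigma_{02}\ge\lambda_{11}^{D}>\sigma_{11}$, and monotonicity of the Rayleigh quotient in $|k|$ excludes all other modes.
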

Therefore, for $\eps$ sufficiently small, the isoperimetric inequality (1.1) fails for the annulus $\Omega_\eps$. We stress that the failure of inequality \eqref{mainresult} occurs only when the area of the annulus $\Omega_\eps$ is very close to that of the sphere, that is, it must exceed $98.77\%$ of the total area of $\mathbb S^2$. See Figure 1 for a numerical study.
 Theorem \ref{thm_counter} is obtained by a careful examination of the asymptotics of the eigenfunctions which, after reduction to an ODE, are written in terms of Legendre functions.

% \medskip
 %\color{red} Una cosa del genere è possibile? For any integer $n$, there exist a set of $n$ points $K=\{x_1,\dots,x_n\}\in\mathbb S^2$ and $\eps=\eps(n)$
% such that the sphere minus the $\eps$-neighborhood of $K$ fails \eqref{mainresult}.  
\color{black}

%We also mention a strictly related result by Hersch \cite{hersch_sphere}, who proves that among all metrics of fixed area on $\mathbb S^2$, the round metric maximises the second eigenvalue (first nontrivial) of the Laplacian. The proof is again by conformal transplantation and follows the ideas of Szeg\"o \cite{szego_1}.

\subsection{Sketch of the proof of Theorem \ref{main}.}
 We recall that the Bandle-Szeg\"o-type inequality 
\eqref{mainresult}, valid for spherical domains of area at most $2\pi$, and its Langford-Laugesen improvement \cite{lan_lau}, which covers areas up to $\frac{16}{17}$ of the total area of the sphere, are proved by  {\it conformal transplantation}: this method consists in taking a conformal map from a simply connected surface $\Omega$ to the target optimal domain $\Omega^{\star}$ (a geodesic disk), and pulling back the Neumann eigenfunctions of the disk to $\Omega$ in order to use them as trial functions for the first nontrivial eigenvalue of $\Omega$. Note that there are many ways of choosing the conformal map, and this freedom guarantees the existence of a conformal map for which the pulled-back functions are orthogonal to the constants (hence they are valid test functions). A crucial requirement in the proof is that the radial profile of the eigenfunction of the geodesic disk of constant curvature is positive and increasing. This fails to be true when the area of the disk is large, and in \cite{lan_lau} the authors are able to relax this requirement of monotonicity replacing it by a monotonicity property for ratios of areas and integration by parts.

\smallskip

Our proof of  \eqref{mainresult} is new and does not use conformal transplantation of eigenfunctions. Rather, it employs the gauge invariance of the magnetic Laplacian and the level lines of the Green function. It can be sketched as follows (see Section \ref{sec_main} for complete details). 

\smallskip

{\it Step 1.} The first step is to introduce, for each point $p\in\Omega$, a Aharonov-Bohm magnetic potential $A_p$: this is a smooth $1$-form on $\Omega\setminus\{p\}$, which is closed, co-closed, and has flux $1$ around $\bd\Omega$ (and then around every loop enclosing $p$). This potential form gives rise to a magnetic Laplacian and a Neumann magnetic spectrum $\{\lambda_k(\Omega,A_p)\}_{k=1,2,\dots}$; by the well-known gauge invariance, since the fluxes of $A_p$ take only integer values, the Neumann spectrum and the Aharonov-Bohm spectrum with pole at $p$ are identical: for all $k=1,2,\dots$ one has
$
\mu_k(\Omega)=\lambda_k(\Omega,A_p)
$
and in particular:
$$
\mu_2(\Omega)=\lambda_2(\Omega,A_p).
$$
This happens for all poles $p\in\Omega$, and will give us freedom when handling the orthogonality relations. 

\smallskip

{\it Step 2.} Now, the test-functions. The magnetic potential $A_p$ is naturally expressed in terms of the Green function $\psi_p$ with pole at $p$ and Dirichlet boundary conditions, because
$$
A_p=-2\pi\star d\psi_p,
$$
where $\star$ is the Hodge-star operator (as a vector field $A_p=2\pi\nabla^{\perp}\psi_p$).  Thus, it is natural to isolate the class of $\psi_p$-radial functions, i.e., those functions which are real and constant on the level sets of $\psi_p$; restricting the Rayleigh quotient to this class of functions yields a Sturm-Liouville eigenvalue problem, whose lowest eigenvalue is positive, and is denoted $\kappa_1(\Omega,A_p)$. The isoperimetric inequality (together with the  Feynman-Hellmann formula) gives, for all $p\in\Omega$:
\begin{equation}\label{sketchone}
\kappa_1(\Omega,A_p)\leq \kappa_1(\Omega^{\star},A_{p^{\star}})
\end{equation}
where $p^{\star}$ is the center of the spherical cap $\Omega^{\star}$ having the same volume of $\Omega$. Direct inspection of the Aharonov-Bohm Laplacian of the pair $(\Omega^{\star},A_{p^{\star}})$ shows that there exists a radial  second Aharonov-Bohm eigenvalue of the geodesic disk (because the Green function of $\Omega^{\star}$ with pole at its center is in fact radial), hence
\begin{equation}\label{sketchtwo}
\kappa_1(\Omega^{\star},A_{p^{\star}})=\lambda_2(\Omega^{\star},A_{p^{\star}})=\mu_2(\Omega^{\star}),
\end{equation}
the second equality following again by gauge invariance. 

\smallskip

{\it Step 3.} It amounts to show that there is a point $\bar p\in\Omega$ such that 
\begin{equation}\label{sketchthree}
\lambda_2(\Omega,A_{\bar p})\leq \kappa_1(\Omega,A_{\bar p}).
\end{equation}
As $\mu_2(\Omega)=\lambda_2(\Omega,A_{\bar p})$, the Theorem follows from
\eqref{sketchone}, \eqref{sketchthree} and \eqref{sketchtwo}. The proof of \eqref{sketchthree} is obtained by mapping $\Omega$ conformally to the unit disk, and employing a fixed point argument to prove that the (radial) eigenfunction associated with $\kappa_1(\Omega,A_{p})$ is orthogonal to the eigenfunction associated with $\lambda_1(\Omega,A_p)=0$ for a suitable choice $\bar p$ of $p$. We refer to Section \ref{sec_barycenter} for complete details. 

\smallskip

We will present our main result, Theorem \ref{main}, and its proof, for spherical domains. However the very same proof (as in \cite{bandle2,lan_lau}) straightforwardly applies to simply connected, compact surfaces with boundary and Gaussian curvature bounded from above by $K$. Namely, we have that the second Neumann eigenvalue is largest when the domain is a geodesic disk of constant curvature $K$. 
 
\begin{thm}\label{main2}
Let $(\Omega,g)$ be a simply connected, compact Riemannian surface with boundary and Gaussian curvature bounded above by $K$. Assume that $4\pi-K|\Omega|_g\geq 0$, where $|\Omega|_g$ is the area of $(\Omega,g)$. Then
$$
\mu_2(\Omega,g)\leq\mu_2(\Omega^{\star}_K),
$$
where $\mu_2(\Omega,g)$ is the second Neumann eigenvalue of $(\Omega,g)$ and $\Omega^{\star}_K$ is a geodesic disk of constant curvature $K$ and area $\abs{\Omega}_g$.
 \end{thm}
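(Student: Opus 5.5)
The plan is to run the three-step scheme of the sketch above verbatim on $(\Omega,g)$, replacing the round sphere by the model surface of constant curvature $K$.

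\emph{Step 1: gauge invariance.} For each $p\in\Omega$ let $\psi_p$ be the Dirichlet Green function of $(\Omega,g)$ with pole at $p$, and set $A_p=-2\pi\star d\psi_p$. This form is closed and co-closed on $\Omega\setminus\{p\}$. Since $\Omega$ is simply connected, $\bd\Omega$ is a single curve and every loop in $\Omega\setminus\{p\}$ has integer flux. Hence $A_p$ is locally the differential of a multivalued angle $\theta_p$, and multiplication by $e^{i\theta_p}$ intertwines the Neumann Laplacian with the magnetic Neumann Laplacian. This gives $\mu_2(\Omega,g)=\lambda_2(\Omega,A_p)$ for every $p$. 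Only topology enters here, not curvature.

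\emph{Step 2: comparison of $\kappa_1$.} Restrict the magnetic Rayleigh quotient to $\psi_p$-radial functions $u=f(\psi_p)$. Because $|A_p|=2\pi|\nabla\psi_p|$, the quotient can be rewritten through the level sets $\{\psi_p>t\}$, namely through the area function $a(t)=|\{\psi_p>t\}|$ and the integrals $\int_{\{\psi_p=t\}}|\nabla\psi_p|^{\pm1}$. The flux identity $\int_{\{\psi_p=t\}}|\nabla\psi_p|=1$ and Cauchy--Schwarz give
$$
\int_{\{\psi_p=t\}}|\nabla\psi_p|^{-1}\ge L(t)^2 .
$$
Here $L(t)$ is the length of the level line. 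The curvature assumption enters through the Bol--Fiala isoperimetric inequality $L^2\ge 4\pi a-Ka^2$ for domains in a surface with curvature at most $K$. This inequality needs $4\pi-Ka\ge0$ for all relevant $a\le|\Omega|_g$, which is exactly the hypothesis $4\pi-K|\Omega|_g\ge0$.

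Equality in Bol--Fiala holds on geodesic disks of curvature $K$. So the Sturm--Liouville problem for $(\Omega,A_p)$, written in the area variable, has coefficients dominated by those of $(\Omega^\star_K,A_{p^\star})$. A Feynman--Hellmann / monotone-coefficient argument then gives $\kappa_1(\Omega,A_p)\le\kappa_1(\Omega^\star_K,A_{p^\star})$. On the model disk the Green function centered at $p^\star$ is radial, so separation of variables gives $\kappa_1(\Omega^\star_K,A_{p^\star})=\lambda_2(\Omega^\star_K,A_{p^\star})=\mu_2(\Omega^\star_K)$. This last equality is the one computation that genuinely depends on the model. I would check it exactly as for the round spherical caps, with $\sin$ replaced by $\sin(\sqrt K\,\cdot)/\sqrt K$, or by $\sinh$ or the identity when $K\le0$, together with the Legendre-type analysis. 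Where the paper proves this for $K=1$ I would invoke it and rescale.

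\emph{Step 3: choosing the pole.} I need $\bar p$ with $\lambda_2(\Omega,A_{\bar p})\le\kappa_1(\Omega,A_{\bar p})$, i.e. a $\kappa_1$-eigenfunction orthogonal to the ground state of $\lambda_1=0$, which is $e^{i\theta_{\bar p}}$. Uniformize $\Omega$ conformally onto the unit disk by the Riemann mapping theorem. The Green function is conformally invariant, so its level sets correspond to circles about the image of $p$ after a Möbius normalization. The orthogonality condition then becomes a vanishing "weighted barycenter" vector field on the disk, and a Brouwer-degree or fixed-point argument as in Section~\ref{sec_barycenter} produces $\bar p$. This step is purely conformal and metric-weighted, so it transfers unchanged.

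I expect Step 2 to be the main obstacle. One must make sure the Sturm--Liouville comparison uses only the Bol--Fiala inequality for the superlevel sets, which are themselves simply connected or have connected complement (hence the single-component $\bd\Omega$ matters), and verify that the admissible range $4\pi-Ka\ge0$ is never left as $a$ runs up to $|\Omega|_g$.
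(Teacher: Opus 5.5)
Your proposal is correct and follows the paper's route: Theorem~\ref{main2} comes from running the proof of Theorem~\ref{main} verbatim, with the spherical isoperimetric inequality replaced by the Bol--Fiala inequality on the superlevel sets of $\psi_p$ (which are topological disks), giving $G_p\ge G^\star_K(a)=a(4\pi-Ka)$, after which Lemma~\ref{32} applies unchanged because its proof only uses $G^\star_K>0$ on $(0,M)$ and $G^\star_K(a)\sim4\pi a$ as $a\to0$. Two small corrections: the comparison of $\kappa_1$ is not a plain monotone-coefficient argument (raising $G$ increases $Gf'^2$ but decreases $4\pi^2f^2/G$), but rests on the Riccati bound $|G_tf_t'/f_t|<2\pi$ of Lemma~\ref{32}, which fixes the sign in the Feynman--Hellmann formula; and the hypothesis $4\pi-K|\Omega|_g\ge0$ is not a requirement of Bol--Fiala (which is trivial where $4\pi-Ka<0$), but is what makes $\Omega^\star_K$ exist and keeps $G^\star_K$ positive on $(0,M)$.
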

This result extends \cite{bandle2} and \cite{lan_lau} with the best possible bound on $|\Omega|_g$.

\subsection{Final remarks} We stress the fact that our proof is not by conformal transplantation of eigenfunctions of the spherical cap on the domain $\Omega$. Our test functions are chosen to be, roughly speaking, the ``lowest energy functions'' that are constant on the level lines of the Green function of the domain; they are not necessarily transplantation of radial eigenfunctions of spherical caps and are more strictly related to the geometry of the domain itself.

\smallskip

The idea of using test functions derived from the Green function is inspired by a similar idea employed in \cite{CLPS25} to prove the reverse Faber-Krahn inequality for the first eigenvalue of the Neumann magnetic Laplacian with constant magnetic field $\beta>0$, in the weak magnetic field regime: in that case, the test-functions are taken in the class of functions which are real and constant on the level curves of the torsion function. 

\smallskip

 In \cite{MPS25}  the method of prescribed level lines (of the Green function) has been applied to prove several isoperimetric inequalities for the {\it first} eigenvalue of the Aharonov-Bohm Laplacian on surfaces, which is positive because non-integral fluxes are considered.
We are confident that this method could have other interesting applications in Spectral Geometry.

%\smallskip

%Finally, one could consider Aharonov-Bohm potentials with flux $\nu$ which is not an integer. In this case the {\it first} eigenvalue is positive and one can consider the maximization problem for this eigenvalue, which is of independent interest: see \cite{MPS25}. When the flux $\nu$ is not an integer,  the isoperimetric inequality for domains in $\mathbb H^2,\mathbb R^2$ and for simply connected domains of $\mathbb S^2$ with area not exceeding $2\pi$ has been established in \cite{CPS22}, using the classical techniques of Bandle, Szeg\"o and Weinberger. In \cite{MPS25}, for simply connected spherical domains the inequality holds without any restriction on the area, and the strategy of the proof is similar to the one described above.

\smallskip

{\bf Organization of the paper.} The paper is organised as follows: in Section \ref{sec_pre} we collect a few preliminaries on the magnetic Laplacian with closed potential $1$-form, the Green function of a surface and gauge invariance. In Section \ref{sec_radialspectrum} we introduce the notion of radial spectrum of the (magnetic) Laplacian: a spectrum obtained by restricting the eigenvalue problem to functions constant on the level lines of the Green function. We will find the good upper bound for the second Neumann eigenvalue by looking at this spectrum. In Section \ref{sec_main} we prove the main result, Theorem \ref{main}. It will be a consequence of three theorems encoding the main features of the proof, namely Theorems \ref{three}, \ref{four} and \ref{barycenter} that are stated in this section. These three theorems are proved in Sections \ref{sub_3}, \ref{sub_4} and \ref{sec_barycenter}, respectively. Theorem \ref{thm_counter} is proved in Section \ref{sec:counter}. At the end of the paper we have included an Appendix \ref{sec_app}, where, for the reader's convenience, we have collected a few details on some standard facts used in the proofs of the preceding sections, in order to keep the presentation self-contained.

\section{Preliminaries: magnetic Laplacian, Green function, and gauge invariance}\label{sec_pre}

Through this section,   $\Omega=(\Omega,g)$ is a bounded simply connected Riemannian surface with smooth boundary.

\subsection{Generalities on the magnetic Laplacian}
Let $p\in\Omega$ and let $A$ be a closed $1$-form in $\Omega\setminus \{p\}$. We denote by $\nu$ the {\it flux} of $A$, namely $\nu\doteq\frac{1}{2\pi}\oint_{c}A$, where $c$ is a simple, closed curve in $\Omega$ containing $p$ \footnote{travelled once in the counterclockwise direction; however the choice of the orientation does not affect our final result}. Let $d^A$ denote the magnetic differential: $d^Au=du-iuA$ (if $A=0$ it is the standard differential), and let $\delta^A$ its formal $L^2$-adjoint  (magnetic co-differential). Then $\delta F=-{\rm div}F$ if $F$ is a $1$-form.

The magnetic Laplacian is defined as $\Delta_A\doteq\delta^Ad^A$. If $A=0$ then $\Delta_A=\Delta$ is the usual Laplacian (the sign convention is that, in $\mathbb R^2$, $\Delta=-\partial^2_{xx}-\partial^2_{yy}$).  We consider the Neumann problem for the magnetic Laplacian
$$
\begin{cases}
\Delta_Au=\lambda u\,, & {\rm in\ }\Omega\setminus\{p\}\\
d^Au(N)=0\,, & {\rm on\ }\partial \Omega.
\end{cases}
$$
Here $N$ is the inner unit normal to $\partial \Omega$.  The spectrum is discrete, made of non-negative eigenvalues of finite multiplicity:
$$
0\leq\lambda_1(\Omega,A)\leq\lambda_2(\Omega,A)\leq\cdots\leq\lambda_k(\Omega,A)\leq\cdots\nearrow+\infty
$$
The eigenvalues are characterized by
\begin{equation}\label{minmaxA}
\lambda_k(\Omega,A)=\min_{{\substack{U\subset H^1_A(\Omega)\\{\rm dim}U=k}}}\max_{0\ne u\in U}\frac{\int_{\Omega}|d^Au|^2dv_g}{\int_{\Omega}|u|^2dv_g}.
\end{equation}
Here by $dv_g$ we denote the Riemannian volume form for the metric $g$. The Sobolev space $H^1_A(\Omega)$ is the space of (complex-valued) functions $u\in L^2(\Omega)$ such that $|d^Au|\in L^2(\Omega)$. % In general,  $H^1_A(\Omega)\subsetneq H^1(\Omega)$ (the space $H^1_A(\Omega)$ can be identified with $H^1(\Omega)$ after a unitary transformation if and only if $\nu\in\mathbb Z$).
We recall that a closed potential $1$-form is usually referred to as ``Aharonov-Bohm''-type potential. We refer to \cite{CPS22} for more details and a brief introduction to the spectral theory of Aharonov-Bohm magnetic Laplacians (see also \cite{FH_book}). 

When $A=0$ we have the usual Neumann problem for the Laplacian on $\Omega$:
$$
\begin{cases}
\Delta u=\mu u\,, & {\rm in\ }\Omega\\
du(N)=0\,, & {\rm on\ }\partial \Omega.
\end{cases}
$$
We use the letter $\mu$ for the usual Neumann eigenvalues:
$$
0=\mu_1(\Omega)<\mu_2(\Omega)\leq\cdots\leq\mu_k(\Omega)\leq\cdots\nearrow+\infty
$$
The Neumann eigenvalues are characterized by
$$
\mu_k(\Omega)=\min_{{\substack{U\subset H^1(\Omega)\\{\rm dim}U=k}}}\max_{0\ne u\in U}\frac{\int_{\Omega}|du|^2dv_g}{\int_{\Omega}|u|^2dv_g}.
$$

\subsection{The Green function} For $p\in\Omega$ let $\psi_p$ be the Green function with pole at $p$, unique solution of
\begin{equation}\label{green}
\begin{cases}
\Delta\psi_p=\delta_p\,, & {\rm in\ }\Omega\,,\\
\psi_p=0\,,& {\rm on\ }\partial\Omega,
\end{cases}
\end{equation}
where $\delta_p$ is the Dirac measure at $p$. Note that $\psi_p$ is positive, smooth and harmonic in $\Omega\setminus\{p\}$. By $\mathbb D$ we denote the unit disk in $\mathbb R^2$ centered at $0$. By $(r,\theta)$ we denote the usual polar coordinates in $\mathbb R^2$ based at $0$. We recall a few well-known facts on the Green function.

\begin{lem} We have:
\begin{enumerate}[i)]
\item The Green function of the unit disk $\mathbb D$ with pole at the origin is given by:
$$
\psi_0(r)=-\dfrac{1}{2\pi}\log r.
$$
\item The Green function is conformally invariant: if $\Phi:\Omega'\to\Omega$ is a conformal diffeomorphism, and $\psi_p$ is the Green function of $\Omega$ with pole at $p$, then $\psi_p\circ\Phi=\Phi^{\star}\psi_p$ is the Green function of $\Omega'$ with pole at $\Phi^{-1}(p)$.

\item $\psi_p$ has no critical points in $\overline\Omega\setminus\{p\}$: if $\Phi:\Omega\to \mathbb D$ is a conformal map, then $\psi_p=\Phi^{\star}\psi_0$, and $\psi_0$ has no critical points in $\overline{\mathbb D}\setminus\{0\}$.
\end{enumerate}
\end{lem}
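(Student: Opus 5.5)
The three items will be proved in order, each reducing to a standard fact.

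For i), we will verify directly that $\psi_0(r)=-\frac{1}{2\pi}\log r$ solves \eqref{green} on $\mathbb D$. It vanishes on $r=1$, and $\log r$ is harmonic in $\mathbb D\setminus\{0\}$. It remains to identify the distributional Laplacian. For a test function $\varphi$, we apply Green's formula on $\{\eps<r<1\}$ and let $\eps\to0$. The boundary term on the small circle is
$$
-\oint_{r=\eps}\varphi\,\partial_r\psi_0\,ds=\frac{1}{2\pi\eps}\cdot 2\pi\eps\,\varphi(0)+o(1)\to\varphi(0),
$$
with the sign fixed by the convention $\Delta=-\partial^2_{xx}-\partial^2_{yy}$. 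The other boundary term is $O(\eps\log\eps)\to0$. Hence $\Delta\psi_0=\delta_0$, and uniqueness follows from the maximum principle, since the difference of two solutions is harmonic with zero boundary values.

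For ii), the key fact is that in dimension two the Laplacian transforms conformally: if $\Phi^{\star}g=e^{2f}g'$, then $\Delta_{g'}(u\circ\Phi)=e^{2f}(\Delta_g u)\circ\Phi$. Consequently $\psi_p\circ\Phi$ is harmonic on $\Omega'\setminus\{\Phi^{-1}(p)\}$ and vanishes on $\partial\Omega'$, because $\Phi$ extends to the boundary as a homeomorphism of closures (or by a limiting/exhaustion argument). We then check the mass of the singularity, which is the only delicate point. Equivalently, the Dirichlet energy $\int|du|^2dv$ is conformally invariant, so the weak formulation $\int\langle d\psi_p,d\varphi\rangle=\varphi(p)$ transfers directly, giving
$$
\int_{\Omega'}\langle d(\psi_p\circ\Phi),d(\varphi\circ\Phi)\rangle\,dv_{g'}=\varphi(p)=(\varphi\circ\Phi)(\Phi^{-1}(p)).
$$
Alternatively, the flux of $d\psi_p$ through a small loop around the pole is a conformally invariant line integral of $\star d\psi_p$ and equals $-1$. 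Uniqueness then gives the claim.

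For iii), the Riemann mapping theorem (uniformization for simply connected surfaces with boundary) provides a conformal $\Phi:\Omega\to\mathbb D$, which we compose with a Möbius automorphism so that $\Phi(p)=0$. By ii), $\psi_p=\psi_0\circ\Phi$. Since $\Phi$ is a diffeomorphism up to the boundary (Kellogg–Warschawski, using the smooth boundary), $d\psi_p=\Phi^{\star}d\psi_0$ vanishes exactly where $d\psi_0$ does. By i), $|d\psi_0|=\frac{1}{2\pi r}\neq0$ on $\overline{\mathbb D}\setminus\{0\}$. The main technical obstacle is the boundary regularity of $\Phi$ needed to include $\partial\Omega$; we will simply quote the Kellogg–Warschawski theorem.
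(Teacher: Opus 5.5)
Your proposal is correct and follows the paper's route: the paper simply recalls i) and ii) as well-known facts, and your verifications of them are the standard ones. It justifies iii) exactly as you do, by writing $\psi_p=\Phi^{\star}\psi_0$ for a conformal map $\Phi:\Omega\to\mathbb D$ and noting that $|d\psi_0|=\frac{1}{2\pi r}$ never vanishes on $\overline{\mathbb D}\setminus\{0\}$; you are more careful than the paper in normalizing $\Phi(p)=0$ and in making explicit the boundary regularity of $\Phi$ at $\partial\Omega$ (which you could also bypass with Hopf's boundary point lemma, giving $d\psi_p(N)>0$ on $\partial\Omega$ directly).
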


We now consider, on $\Omega\setminus\{p\}$ the $1$-form:
\begin{equation}\label{ap}
A_p=-2\pi\star d\psi_p,
\end{equation}
where $\star$ is the Hodge-star operator associated with the metric $g$. The orientation is such that, if $e_1$ is the unit vector tangent to $\bd\Omega$, in the counterclockwise direction, then $\star e_1=N$, the inner unit normal.

\begin{lem} We have:
\begin{enumerate}[i)] 
\item The $1$-form $A_p$ is smooth, closed and co-closed, hence harmonic in $\Omega\setminus\{p\}$.
\item The flux of $A_p$ around $\bd\Omega$ (and around any loop enclosing $p$) is equal to $1$. 
\item  Let $(r,\theta)$ be the usual polar coordinates in $\mathbb R^2$ based at $0$. Then, on $\mathbb D\setminus\{0\}$ we have $A_0=d\theta$.
\end{enumerate}
\end{lem}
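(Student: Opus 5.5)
We prove the three items in the order iii), i), ii), working on the unit disk first and then transferring to $\Omega$ by conformal invariance (the previous Lemma, items ii) and iii)).

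\emph{Item iii).} On $\mathbb D\setminus\{0\}$ we have $\psi_0=-\frac{1}{2\pi}\log r$, so $d\psi_0=-\frac{1}{2\pi}\frac{dr}{r}$. In the orthonormal coframe $\{dr, r\,d\theta\}$ the Hodge star, with the orientation fixed in the text, acts by $\star dr=\pm r\,d\theta$. The sign is pinned down on the unit circle: there $e_1=\partial_\theta$ is the counterclockwise tangent and $N=-\partial_r$ is the inner normal, and the convention $\star e_1=N$ forces $\star dr=r\,d\theta$ with the sign for which the final answer is $d\theta$. We then get $A_0=-2\pi\star d\psi_0=\frac{1}{r}\,r\,d\theta=d\theta$. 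Checking this sign consistently with the stated convention is the only delicate point, and it is a bookkeeping matter.

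\emph{Item i).} Smoothness of $A_p$ on $\Omega\setminus\{p\}$ follows from smoothness of $\psi_p$ there. Co-closedness: up to sign $\delta\star d\psi_p=\star d\star\star d\psi_p=\pm\star dd\psi_p=0$, since $\star\star=-1$ on $1$-forms in dimension two. Closedness: $dA_p=-2\pi\, d\star d\psi_p=\pm 2\pi\star\Delta\psi_p\,dv_g$, which vanishes on $\Omega\setminus\{p\}$ because $\psi_p$ is harmonic there. A closed and co-closed form is harmonic.

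\emph{Item ii).} Since $A_p$ is closed on the punctured domain, Stokes' theorem shows that $\oint_c A_p$ is the same for all loops $c$ homologous to $\partial\Omega$ in $\Omega\setminus\{p\}$, i.e. for every simple loop enclosing $p$; it therefore suffices to compute it for one loop. Let $\Phi:\Omega\to\mathbb D$ be a conformal diffeomorphism with $\Phi(p)=0$, which exists by the Riemann mapping theorem for simply connected surfaces. The Hodge star on $1$-forms is conformally invariant in dimension two, and by the previous Lemma $\psi_p=\Phi^{\star}\psi_0$. Hence $A_p=\Phi^{\star}A_0=\Phi^{\star}d\theta$, which gives
$$
\oint_{\partial\Omega}A_p=\oint_{\partial\mathbb D}d\theta=2\pi
$$
for an orientation-preserving $\Phi$, so the flux is $1$.

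As a check independent of $\Phi$, one can use the distributional equation $\Delta\psi_p=\delta_p$ with the divergence theorem on $\Omega$ minus a small disk around $p$: $\oint_{\partial\Omega}\star d\psi_p$ equals $\pm\int_{\partial\Omega}\partial_N\psi_p$, which is $-1$ up to the orientation conventions, and multiplying by $-2\pi$ gives $2\pi$.
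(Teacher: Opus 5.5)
Your proof is correct. Items i) and iii) are the same direct computations the paper alludes to; for ii) your main route is different.

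A small point on iii): you should not pick ``the sign for which the final answer is $d\theta$''. The sign is forced. On the unit circle the convention $\star e_1=N$ reads $\star d\theta=-dr$. Since $\star\star=-1$ on $1$-forms, this gives $\star dr=d\theta$ there, i.e.\ $\star dr=r\,d\theta$ everywhere, and then $A_0=d\theta$ follows.

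The paper proves ii) directly from the Green function. It rewrites $\frac{1}{2\pi}\oint_{\partial\Omega}A_p$ as $\int_{\partial\Omega}d\psi_p(N)\,ds_g=\int_\Omega\Delta\psi_p=1$, which is exactly what you relegate to an ``independent check''.

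Your primary argument instead transfers to the disk. Conformal invariance of $\psi_p$ and of $\star$ on $1$-forms gives $A_p=\Phi^\star d\theta$, so ii) becomes a corollary of iii). This is valid, and it identifies $A_p$ as the pull-back of $d\theta$, a fact the paper uses later anyway in Lemma \ref{Phiqtwo}.

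It is, however, heavier than the paper's argument:
\begin{enumerate}
\item It needs uniformization.
\item It needs $\Phi$ to be orientation-preserving, or else a check that the two resulting signs cancel.
\item Since you integrate over $\partial\Omega$ itself, it implicitly needs $\Phi$ to extend to the boundary.
\end{enumerate}
The last point is easily avoided. Your Stokes reduction already lets you integrate instead over the interior loop $\Phi^{-1}(\{|z|=r_0\})$ with $0<r_0<1$.

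By contrast, the paper's divergence-theorem argument uses only $\Delta\psi_p=\delta_p$ and $\psi_p=0$ on $\partial\Omega$, with no conformal machinery at all.
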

\begin{proof}
The proof is immediate. Since $\psi_p$ is harmonic in $\Omega\setminus\{p\}$ it follows by direct computation that $\star d\psi_p$ is closed and co-closed.

If $e_1$ is the unit vector tangent to $\bd\Omega$, in the counterclockwise direction, then $\star e_1=N$, hence
$$
\dfrac {1}{2\pi}\oint_{\bd\Omega}A_p=\int_{\bd\Omega}\star d\psi_p(e_1)ds_g=\int_{\bd\Omega}d\psi(N)ds_g=1.
$$ 
By $ds_g$ we denote the $1$-dimensional Riemannian measure for $g$. Point $iii)$ is a direct computation.
\end{proof}

\subsection{Gauge invariance} Fix a base point $x_0\in\Omega$, $x_0\ne p$ and define a function on $\Omega\setminus\{p\}$ as follows:
$$
\Theta_p(x):=\int_{c_x}A_p
$$
where $c_x$ is any curve joining $x_0$ to $x$. Since $A_p$ is closed, and the flux around a loop in $\Omega$ is an integer, one sees that choosing another such curve $c'_{x}$ one gets that 
$$
\int_{c_x}A_p-\int_{c_x'}A_p\in 2\pi\mathbb Z
$$
This means that the function 
$$
e^{i\Theta_p(x)}
$$
is well-defined and smooth in $\Omega\setminus\{p\}$. Moreover it belongs to $L^2(\Omega)$.

We observe that $e^{i\Theta_p}$ induces a linear isomorphism
$$
e^{i\Theta_p}:H^1(\Omega)\to H^1_{A_p}(\Omega),
$$
given by $u\mapsto e^{i\Theta_p}u$. This is a unitary operator:
$$
\int_{\Omega}|du|^2+|u|^2\,dv_g=\int_{\Omega}|d^{A_p}(e^{i\Theta_p}u)|^2+|e^{i\Theta_p}u|^2\, dv_g\,,\ \ \ \forall u\in H^1(\Omega),
$$
because one has the formula (gauge invariance):
$$
d^{A_p}(e^{i\Theta_p}u)=e^{i\Theta_p}du
$$
in $\Omega\setminus\{p\}$.
In particular
$$
\Delta_{A_p}=e^{i\Theta_p}\Delta e^{-i\Theta_p},
$$
so that $\Delta$ and $\Delta_{A_p}$ are unitarily equivalent. This proves the following.
\begin{lem}\label{one} For all $p\in\Omega$ and $k\in\mathbb N$:
$$
\lambda_k(\Omega,A_p)=\mu_k(\Omega).
$$
In particular, if $u$ is an eigenfunction of $\Delta_{A_p}$ then $e^{-i\Theta_p}u$ is an eigenfunction of $\Delta$, associated to the same eigenvalue. Vice versa, if $v$ is an eigenfunction of $\Delta$ then $e^{i\Theta_p}v$ is an eigenfunction of $\Delta_{A_p}$, associated to the same eigenvalue.
\end{lem}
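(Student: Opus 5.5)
The statement follows from the gauge transformation just constructed, once it is made precise at the level of quadratic forms and domains, and then combined with the min-max characterizations \eqref{minmaxA} and its Neumann analogue. I will check three points in order: that $e^{i\Theta_p}$ is a well-defined smooth unimodular function on $\Omega\setminus\{p\}$; that $u\mapsto e^{i\Theta_p}u$ maps $H^1(\Omega)$ onto $H^1_{A_p}(\Omega)$; and that it preserves both the $L^2$ norm and the magnetic Dirichlet energy.

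Well-definedness holds because $A_p$ is closed and its flux around any loop in $\Omega\setminus\{p\}$ is an integer. Such a loop is homologous in $\Omega\setminus\{p\}$ to an integer multiple of $\partial\Omega$, since $\Omega$ is simply connected, and the flux around $\partial\Omega$ is $1$ by the previous lemma. Hence the line integrals of $A_p$ along different paths differ by elements of $2\pi\mathbb Z$. Locally $d\Theta_p=A_p$, so the Leibniz rule gives
$$d^{A_p}(e^{i\Theta_p}u)=e^{i\Theta_p}(du+iu\,A_p)-i e^{i\Theta_p}u\,A_p=e^{i\Theta_p}du$$
on $\Omega\setminus\{p\}$, first for smooth $u$. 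Since $|e^{i\Theta_p}|=1$, this yields $|d^{A_p}(e^{i\Theta_p}u)|=|du|$ and $|e^{i\Theta_p}u|=|u|$ pointwise.

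The main technical obstacle is the singularity at $p$. Near $p$, in conformal coordinates, $A_p=d\theta+(\text{smooth})$, so $A_p$ fails to be in $L^2$ and $e^{i\Theta_p}$ is discontinuous at $p$. Two facts handle this. First, a point has zero capacity in dimension two, so smooth functions supported away from $p$ are dense both in $H^1(\Omega)$ and in $H^1_{A_p}(\Omega)$. I would take the latter as the closure of $C^\infty(\overline\Omega\setminus\{p\})$ under the magnetic norm, as is standard for Aharonov--Bohm operators. Second, the pointwise identity on that dense core extends by continuity to an isometry between the form domains. The inverse map $v\mapsto e^{-i\Theta_p}v$ is isometric by the same computation, so the map is surjective.

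With this unitary equivalence of the quadratic forms and of $L^2$, each $k$-dimensional subspace $U\subset H^1(\Omega)$ corresponds to the $k$-dimensional subspace $e^{i\Theta_p}U\subset H^1_{A_p}(\Omega)$ with identical Rayleigh quotients, and conversely. The min-max values therefore coincide, giving $\lambda_k(\Omega,A_p)=\mu_k(\Omega)$ for every $k$. For the eigenfunction statement, I would pass from the weak formulation to the operators. The forms correspond, so the associated self-adjoint operators satisfy $\Delta_{A_p}=e^{i\Theta_p}\Delta e^{-i\Theta_p}$, including the natural Neumann boundary conditions encoded in the form domains; note that $d^{A_p}(e^{i\Theta_p}v)(N)=e^{i\Theta_p}dv(N)$ on $\partial\Omega$. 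Hence $v$ is an eigenfunction of $\Delta$ if and only if $e^{i\Theta_p}v$ is an eigenfunction of $\Delta_{A_p}$ with the same eigenvalue, and vice versa with $e^{-i\Theta_p}$.
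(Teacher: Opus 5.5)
Your proposal is correct and follows essentially the same route as the paper. Both arguments obtain well-definedness of $e^{i\Theta_p}$ from the integer flux, use the identity $d^{A_p}(e^{i\Theta_p}u)=e^{i\Theta_p}du$ to get the unitary equivalence $\Delta_{A_p}=e^{i\Theta_p}\Delta e^{-i\Theta_p}$, and then conclude by min-max. Your treatment of the pole via the zero capacity of a point is a refinement the paper leaves implicit. Note that your closure definition of $H^1_{A_p}(\Omega)$ agrees with the paper's maximal one: if $u\in L^2(\Omega)$ and $|d^{A_p}u|\in L^2(\Omega)$, then $e^{-i\Theta_p}u\in H^1(\Omega\setminus\{p\})=H^1(\Omega)$.
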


%%%

\section{The radial spectrum at a point}\label{sec_radialspectrum}
Let $(\Omega,g)$ be a bounded, simply connected Riemannian surface. Let $M\doteq |\Omega|$ be the area of $(\Omega,g)$. Let $p\in\Omega$ and let $\psi_p$ be the Green function with pole at $p$ as in \eqref{green}. We introduce the function space:
$$
\mathcal R_p(\Omega)=\{u: u=g\circ\psi_p, \quad g\in H^1(0,\infty)\}\subset H^1_{A_p}(\Omega).
$$
Thus, $\mathcal R_p(\Omega)$ consists of all functions in $H^1_{A_p}(\Omega)$ which are constant on the level curves of $\psi_p$. We set:
\begin{equation}\label{K1-u}
\kappa_1(\Omega,A_p)=\min_{0\ne u\in\mathcal R_p(\Omega)}\frac{\int_{\Omega}|d^{A_p}u|^2dv_g}{\int_{\Omega}|u|^2dv_g}.
\end{equation}
We call $\kappa_1(\Omega,A_p)$ the first {\it radial eigenvalue} of $\Omega$ for the potential $A_p$. Note that $\kappa_1(\Omega,A_p)$ {\it does not need} to be an eigenvalue of $\Delta_{A_p}$ (hence of $\Delta$) in $\Omega$.  

Define a function $G_p:(0, M)\to (0,\infty)$ by:
\begin{equation}\label{Gp}
G_p(a)\doteq\int_{\psi_p=\beta_p(a)}\dfrac{1}{\abs{d\psi_p}}ds_g
\end{equation}
where $\beta_p(a)$ is such that the super level set $\{\psi_p>\beta_p(a)\}$ has area $a$.

We recall that $G_p(a)\sim 4\pi a$ as $a\to 0$. This follows from the fact that $\psi_p\sim-\frac{1}{2\pi}\log r$ as $r\to 0$, where $r$ is the geodesic distance from $p$, and that $\psi_p+\frac{1}{2\pi}\log r$ is a smooth function near $p$. In fact, as $a\to 0$, the behavior of $G_p$ is the same as that of $G_0$, defined as in \eqref{Gp} when $\Omega=\mathbb D$ and $p=0$, see Subsection \ref{sub:disk}.

The next result characterizes $\kappa_1(\Omega,A_p)$ as the minimizer of a one-dimensional Rayleigh quotient associated with a Sturm-Liouville problem.

\begin{lem}\label{two} We have the following:
\begin{enumerate}[i)]
\item 
\begin{equation}\label{K1}
\kappa_1(\Omega,A_p):=\min_{0\ne f\in\mathcal F_p}\frac{\int_0^{ M}\left(G_p(a)f'(a)^2+4\pi^2\frac{f(a)^2}{G_p(a)}\right)da}{\int_0^{M}f^2(a)da}.
\end{equation}
 where 
 $
\mathcal F_p=\{f\in L^2(0,M): \sqrt{G_p}f',f/\sqrt{G_p}\in L^2(0,M)\}.
$
\item $\kappa_1(\Omega,A_p)$ is the first eigenvalue of the  following Sturm-Liouville problem in $(0,M)$:
\begin{equation}\label{SL1}
\begin{cases}
-(G_pf')'+\frac{4\pi^2}{G_p}f=\kappa f\,, & {\rm in\ }(0,M)\,,\\
\lim_{a\to 0^+}G_p(a)f'(a)=f'(M)=0.
\end{cases}
\end{equation}
\end{enumerate}
\end{lem}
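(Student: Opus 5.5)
The plan is a change of variables via the coarea formula along the level sets of $\psi_p$, parametrized by enclosed area, which turns the two-dimensional Rayleigh quotient \eqref{K1-u} into the one-dimensional quotient \eqref{K1}. Part ii) then follows from the usual Sturm--Liouville variational theory.

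Write $u\in\mathcal R_p(\Omega)$ as $u=f(a(x))$, where $a(x)=|\{\psi_p>\psi_p(x)\}|$ is the area function. Since $\psi_p$ has no critical points in $\overline\Omega\setminus\{p\}$ (Lemma on the Green function, iii)), $a$ is a smooth, strictly monotone reparametrization of $\psi_p$ on $\Omega\setminus\{p\}$. Hence the functions constant on level lines of $\psi_p$ are exactly the functions $f\circ a$. Set $V(t)=|\{\psi_p>t\}|$. By the coarea formula,
$$V'(t)=-\int_{\psi_p=t}\frac{ds_g}{|d\psi_p|}.$$
Since $\beta_p$ is the inverse of $V$, this gives $\beta_p'(a)=-1/G_p(a)$ and $|da|=G_p(a)\,|d\psi_p|$.

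For real $u$ we have $|d^{A_p}u|^2=|du|^2+u^2|A_p|^2$, because the cross term $-2\,\mathrm{Re}(i\,u\,du\cdot A_p)$ vanishes. Using \eqref{ap}, $|A_p|=2\pi|d\psi_p|$. Also $|du|=|f'(a)|\,G_p(a)\,|d\psi_p|$. Apply the coarea formula with respect to $a$ (on a level line, $dv_g=ds_g\,da/|da|$):
$$\int_\Omega |du|^2\,dv_g=\int_0^M f'(a)^2G_p(a)^2\int_{\psi_p=\beta_p(a)}\frac{|d\psi_p|}{G_p(a)}\,ds_g\,da.$$
The inner integral is $\int|d\psi_p|\,ds_g=1$: this is the flux of $\psi_p$ through any level curve, by harmonicity and the normalization used in the Lemma on $A_p$. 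So $\int_\Omega |du|^2\,dv_g=\int_0^M G_pf'^2\,da$.

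The same computation gives
$$\int_\Omega u^2|A_p|^2\,dv_g=\int_0^M \frac{4\pi^2 f^2}{G_p^2}\,G_p\,da=\int_0^M\frac{4\pi^2f^2}{G_p}\,da,$$
and
$$\int_\Omega u^2\,dv_g=\int_0^M f^2\,da.$$
Therefore $u\in H^1_{A_p}$ exactly when $f\in\mathcal F_p$, and the quotients in \eqref{K1-u} and \eqref{K1} agree. This proves i), up to identifying the admissible classes: the class written with $g\in H^1(0,\infty)$ in $\psi_p$ corresponds to $\mathcal F_p$ in $a$.

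For ii), the weight satisfies $G_p>0$ on $(0,M]$ with $G_p(a)\sim 4\pi a$ near $0$, so the potential $4\pi^2/G_p\sim \pi/a$ near $0$. Since $\int_0^M f^2/G_p<\infty$, functions in $\mathcal F_p$ are forced to vanish at $0$ in a weak sense. I would proceed in three steps:
\begin{enumerate}[i)]
\item Show that the embedding $\mathcal F_p\hookrightarrow L^2(0,M)$ is compact. Near $a=0$ one can compare with the disk model $G_0(a)=4\pi a$ via Subsection \ref{sub:disk}, where one gets the bound $\int_0^\eps f^2\le C\eps\int_0^\eps f^2/a$. Away from $0$, compactness is the standard $H^1$ Rellich theorem.
\item Conclude that the minimum in \eqref{K1} is attained.
\item Derive the Euler--Lagrange equation $-(G_pf')'+\frac{4\pi^2}{G_p}f=\kappa f$ with natural boundary conditions. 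Integrating by parts against arbitrary test functions gives $G_p(M)f'(M)=0$, hence $f'(M)=0$ since $G_p(M)>0$. The boundary term at $0$ yields $\lim_{a\to0}G_pf'=0$ (this limit exists from the equation).
\end{enumerate}
Minimality then identifies $\kappa_1$ with the first eigenvalue of \eqref{SL1}.

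The main obstacle is this endpoint analysis at $a=0$: the compactness in step i), and making rigorous that the natural condition there is $\lim G_pf'=0$. The endpoint is singular (limit-point/limit-circle type). I would handle it by Frobenius-type asymptotics, $f\sim c_1a^{1/2}+c_2a^{-1/2}$ near $0$, and note that only the regular branch $a^{1/2}$ lies in $\mathcal F_p$; for that branch $G_pf'\to0$.
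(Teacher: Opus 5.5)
Your proposal is correct and follows the paper's proof. The paper also combines the coarea formula, the unit flux $\int_{\psi_p=t}|d\psi_p|\,ds_g=1$, and the substitution $t=\beta_p(a)$ with $\beta_p'=-1/G_p$; it integrates in $t$ first and then changes variables, while you apply coarea directly in $a$, which is equivalent. For ii) the paper only cites standard Sturm--Liouville theory with a Bessel-type endpoint at $a=0$, and your compactness bound $\int_0^\eps f^2\le C\eps\int_0^\eps f^2/G_p$ and the exponents $\pm\tfrac12$, showing that only the $a^{1/2}$ branch is admissible so that $G_pf'\to 0$, fill in what it leaves implicit.
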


\begin{proof} We omit the subscript $p$ through the whole proof and simply write $\psi$ for $\psi_p$ and $G$ for $G_p$. Let $u=g\circ\psi$. One has:
$$
d^Au=(g'\circ\psi)d\psi-i(g\circ\psi)A;
$$
since $A$ and $d\psi$ are pointwise orthogonal by definition \eqref{ap}:
$$
\begin{aligned}
\abs{d^Au}^2&=(g'\circ\psi)^2\abs{d\psi}^2+(g\circ\psi)^2\abs{A}^2\\
&=\Big((g'\circ\psi)^2+4\pi^2(g\circ\psi)^2\Big)\abs{d\psi}^2.
\end{aligned}
$$
By the coarea formula
\begin{equation}\label{u-g}
\begin{aligned}
\int_{\Omega}\abs{d^Au}^2&=\int_0^{\infty}\Big(g'(t)^2+4\pi^2g(t)^2\Big)\int_{\psi=t}\abs{d\psi}\,ds_g\,dt\\
&=\int_0^{\infty}\Big(g'(t)^2+4\pi^2g(t)^2\Big)dt.
\end{aligned}
\end{equation}
The last equality follows from
$$
\int_{\psi=t}\abs{d\psi}\,ds_g=\int_{\psi=t}\derive{\psi}{N}\,ds_g=\int_{\{\psi>t\}}\Delta\psi\, dv_g=1.
$$
We now change variable as follows. Write:
$$
\alpha(t)\doteq\abs{\{\psi>t\}}=\int_t^{\infty}\int_{\psi=s}\dfrac{1}{\abs{d\psi}}\,ds_g\,dt
$$
so that
$$
\alpha'(t)=-\int_{\psi=t}\dfrac{1}{\abs{d\psi}}\,ds_g.
$$
Since $\psi$ has no critical points in $\overline\Omega\setminus\{p\}$, then $\abs{d\psi}\geq c>0$. This implies that $\alpha: (0,\infty)\to (0,M)$ is smooth, strictly decreasing, and admits a smooth inverse $\beta:(0,M)\to (0,\infty)$. We set:
$$
t=\beta(a), \quad g\circ\beta=f,
$$
hence $g(t)=f(a)$. Since $\beta(\alpha(t))=t$ we have
$
\beta'(\alpha(t))\alpha'(t)=1, 
$
which means
$$
\beta'(a)=\dfrac{1}{\alpha'(\beta (a))}=-\dfrac{1}{\int_{\psi=\beta(a)}\frac{1}{\abs{d\psi}}\,ds_g}.
$$
Defining the function $G:(0,M)\to(0,\infty)$ as in \eqref{Gp} by
\begin{equation*}
G(a)=\int_{\psi=\beta(a)}\frac{1}{\abs{d\psi}}\,ds_g,
\end{equation*}
we conclude that 
$$
\beta'(a)=-\dfrac{1}{G(a)}.
$$
Now 
$$
\threearray
{g'(t)=g'(\beta(a))=(g\circ\beta)'(a)\cdot\dfrac{1}{\beta'(a)}=\dfrac{f'(a)}{\beta'(a)}=-f'(a)G(a),}
{g(t)=f(a),}
{dt=\beta'(a)da=-\dfrac{da}{G(a)}.}
$$
We conclude
$$
\begin{aligned}
\int_{\Omega}\abs{d^Au}^2\,dv_g&=\int_0^{\infty}\Big(g'(t)^2+4\pi^2g(t)^2\Big)dt\\
&=\int_0^{ M}\left(G(a)f'(a)^2+4\pi^2\frac{f(a)^2}{G(a)}\right)da.
\end{aligned}
$$
On the other hand, if $u=g\circ\psi$:
$$
\begin{aligned}
\int_{\Omega}u^2\,dv_g&=\int_0^{\infty}g(t)^2\int_{\psi=t}\dfrac{1}{\abs{d\psi}}\,ds_g\,dt\\
&=\int_0^Mf(a)^2\,da.
\end{aligned}
$$
This proves $i)$. The proof of $ii)$ is standard Sturm-Liouville theory and follows directly from $i)$. We sketch the main facts. Assume that $f$ is a minimizer of the Rayleigh quotient in \eqref{K1}.  Then, taking $f+t\phi$ in the Rayleigh quotient, with $\phi\in C^{\infty}_c(0,M)$,  deriving with respect to $t$ and exploiting the minimality of $f$, we get the differential equation in \eqref{SL1} solved by $f$ in $(0,M)$. As for the boundary conditions, consider again the Rayleigh quotient with test functions $f+t\phi$. First, take $\phi$ supported in a neighborhood of $M$. Since $G(M)>0$, integrating by parts and using the fact that $f$ satisfies the differential equation in the interior, we get the usual Neumann condition $f'(M)=0$. Take now $\phi$ supported in a neighborhood of $0$. We have that $G(a)\sim 4\pi a$ as $a\to 0$, so we have a singular endpoint (of Bessel type) and we get the condition $\lim_{a\to 0^+}G(a)f'(a)=0$.
\end{proof}

We remark that we have defined $\kappa_1(\Omega,A_p)$ as the minimum of a Rayleigh quotient over a certain subspace of $H^1_{A_p}(\Omega)$. Then $\kappa_1(\Omega,A_p)$ turns out to be the first eigenvalue of a suitable Sturm-Liouville problem \eqref{SL1}. It is clear that we can define a whole sequence of eigenvalues $\{\kappa_k(\Omega,A_p)\}_{k=1}^{\infty}$ via the min-max procedure, which then coincides with the spectrum of \eqref{SL1}. We call this spectrum the {\it radial spectrum} at $p$. Note that these need not to be actual eigenvalues of $\Delta_{A_p}$ in $\Omega$. However, as we shall see, sometimes they are. In this paper we just work with $\kappa_1(\Omega,A_p)$.

\subsection{The case of the unit disk}\label{sub:disk}

On the unit disk $\mathbb D$ the Green function with pole at $0$ is
$$
\psi_0(r)=-\dfrac{1}{2\pi}\log r
$$
hence $\{\psi_0=t\}$ is the circle of radius $r=e^{-2\pi t}$, so that
$$
\abs{\{\psi_0=t\}}=2\pi e^{-2\pi t}, \quad \alpha(t)=\abs{\{\psi_0>t\}}=\pi e^{-4\pi t}.
$$
Hence, if $\beta(a)=\alpha^{-1}(a)$, inverting $a=\pi e^{-4\pi t}$ gives
$$
\beta(a)=-\dfrac{1}{4\pi}\log(\frac a{\pi}).
$$
Now $d\psi_0=-\frac{1}{2\pi r}dr$, hence 
$\abs{d\psi_0}=\dfrac{1}{2\pi r}=\dfrac{1}{2\pi}e^{2\pi t}$.
Finally 
$$
\gamma_0(t)\doteq\int_{\psi_0=t}\dfrac{1}{|d\psi_0|}ds=\abs{\{\psi_0=t\}}\cdot\dfrac{1}{\abs {d\psi_0}}=4\pi^2e^{-4\pi t},
$$
where $ds$ is the arc-length element. Hence, 
$$
G_0(a)=\gamma_0(\beta(a))=4\pi a.
$$
\subsection{The case of a spherical cap}\label{sub:sphericalcup}
Let now $\Omega^{\star}\subset\mathbb S^2$ be a spherical cap of radius $R$ with center $p^{\star}$. In polar coordinates $(r,\theta)$, where $r$ is the geodesic distance from $p^{\star}$, the Green function with pole $p^{\star}$ is
$$
\psi_{p^{\star}}(r)=-\frac{1}{2\pi}\log\left(\frac{\tan(r/2)}{\tan(R/2)}\right).
$$
Proceeding as in the case of the disk, we see that $G_{p^{\star}}(a)=a(4\pi-a)$. However, the same fact will also drop from the following isoperimetric inequality, which will be needed later. 
Let $G_p:(0,M)\to (0,\infty)$ be defined as in \eqref{Gp} in terms of the Green function of $\Omega$ with pole $p$. Then we have:

\begin{lem}\label{lem31} One has $G_{p^\star}\leq G_p$ with equality a.e. if and only if $\Omega=\Omega^{\star}$ and $p=p^{\star}$. Moreover $G_{p^{\star}}(a)=a(4\pi-a)$ for all $a\in (0,M)$.
\end{lem}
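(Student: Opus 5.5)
Throughout, write $\Omega_t=\{\psi_p>t\}$ and $\alpha(t)=|\Omega_t|$, so that $\beta=\alpha^{-1}$, $G_p(a)=-\alpha'(\beta(a))$, and $\partial\Omega_t=\{\psi_p=t\}$. For every $t>0$, $\Omega_t$ is simply connected: it is the preimage of a disk centered at the origin under a conformal map to $\mathbb D$ (by the lemma on the Green function). Hence each $\Omega_t$ is an admissible domain for the classical isoperimetric inequality on $\mathbb S^2$,
$$
L(\partial\Omega_t)^2\geq \alpha(t)\bigl(4\pi-\alpha(t)\bigr),
$$
with equality if and only if $\Omega_t$ is a geodesic disk. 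The plan is to combine this with Cauchy--Schwarz on the level line, using the flux identity $\int_{\psi_p=t}|d\psi_p|\,ds_g=1$ from the proof of Lemma \ref{two}:
$$
L(\{\psi_p=t\})^2=\Bigl(\int_{\psi_p=t}|d\psi_p|^{1/2}|d\psi_p|^{-1/2}ds_g\Bigr)^2\leq \int_{\psi_p=t}|d\psi_p|\,ds_g\cdot\int_{\psi_p=t}\frac{ds_g}{|d\psi_p|}=G_p(\alpha(t)).
$$
Setting $a=\alpha(t)$ gives $G_p(a)\geq a(4\pi-a)$ for all $a\in(0,M)$.

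To get $G_{p^\star}(a)=a(4\pi-a)$, I will check that both inequalities are equalities for the cap. By rotational symmetry of the radial Green function, every level set is a concentric geodesic circle, so the spherical isoperimetric inequality is an equality. Moreover $|d\psi_{p^\star}|$ is constant on each level circle, so Cauchy--Schwarz is also an equality. One can double-check directly: at geodesic radius $r$, $\psi'=-\frac{1}{2\pi\sin r}$, the circle has length $2\pi\sin r$, so $G=(2\pi\sin r)^2=4\pi^2\sin^2 r$, while $a=2\pi(1-\cos r)$ gives $a(4\pi-a)=4\pi^2\sin^2 r$. This holds for every $a\in(0,M)$ since $M=|\Omega^\star|$, and it yields $G_{p^\star}\leq G_p$.

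For the rigidity statement, suppose $G_p=G_{p^\star}$ a.e. Both functions are continuous on $(0,M)$, since $\psi_p$ has no critical points, so equality holds everywhere. Then for each $t$ the isoperimetric inequality is an equality, so every $\Omega_t$ is a geodesic disk. Letting $t\to0^+$, $\Omega=\bigcup_t\Omega_t$ is an increasing union of geodesic disks of area tending to $M$, hence a geodesic disk $\Omega^\star$. Equality in Cauchy--Schwarz also forces $|d\psi_p|$ to be constant on each level line.

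The remaining task is to show that the pole is the center. I expect this to be the delicate point, and I will handle it as follows. As $t\to\infty$ the disks $\Omega_t$ shrink to $p$, since $\psi_p\sim-\frac1{2\pi}\log r$ near $p$. Each $\Omega_t$ is a geodesic disk whose boundary is a level line of a function harmonic between consecutive levels. The function $\psi_p-\psi_{c}$, with $c$ the center of $\Omega$, is harmonic away from $\{p,c\}$ and vanishes on $\partial\Omega$. With constant $|d\psi_p|$ on $\partial\Omega$, the normal derivatives agree as well, because $\partial_N\psi_p=1/L(\partial\Omega)=\partial_N\psi_c$. Unique continuation (Cauchy data zero on $\partial\Omega$) then gives $\psi_p=\psi_c$ near the boundary, hence everywhere, which forces $p=c=p^\star$. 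Conversely, when $\Omega=\Omega^\star$ and $p=p^\star$, equality holds by the computation above.
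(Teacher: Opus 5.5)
Your proof is correct. For the inequality and for the formula $G_{p^\star}(a)=a(4\pi-a)$ it follows the paper's argument: Cauchy--Schwarz on $\{\psi_p=t\}$ together with the flux identity gives $|\{\psi_p=\beta(a)\}|^2\le G_p(a)$, and the spherical isoperimetric inequality gives $G_p(a)\ge a(4\pi-a)$. For the cap both steps are equalities, and your explicit check $G=4\pi^2\sin^2 r$ is a welcome confirmation.

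The rigidity part is where you genuinely differ. The paper argues in one line that constancy of $|d\psi_p|$ on every level line makes the level lines parallel, so $\psi_p$ is radial and hence $\Omega=\Omega^\star$, $p=p^\star$. This tacitly uses that the normalized gradient lines of such a function are geodesics and that parallel geodesic circles are concentric. You separate the two pieces of equality information. The isoperimetric equality alone makes every $\Omega_t$, hence $\Omega$, a geodesic disk. The Cauchy--Schwarz equality is then used on a single level line, where matching Cauchy data of $\psi_p$ and $\psi_c$ plus unique continuation forces $p=c$. Your route is more explicit analytically, and you treat ``a.e.'' versus ``everywhere'' properly, which the paper skips; the cost is invoking unique continuation. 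The paper's route is shorter and purely geometric.

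Two small steps deserve a line each:
\begin{itemize}
\item An increasing union of geodesic disks is a geodesic disk: for $s>t$ one has $d(c_s,c_t)\le r_t-r_s$, so the centers converge.
\item Constancy of $|d\psi_p|$ passes from the levels $t>0$ to $\partial\Omega$ by continuity up to the boundary.
\end{itemize}
You can bypass both by running your unique-continuation argument on each $\Omega_t$. There $\psi_p-t$ is the Green function with pole $p$, and $|d\psi_p|$ is constant on $\partial\Omega_t$ directly. So every $\Omega_t$ is a disk centered at $p$, and so is $\Omega=\bigcup_{t>0}\Omega_t$.
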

\begin{proof}
We omit the subscript $p$ through the whole proof. For all $t\in (0,\infty)$ we have
\begin{equation}\label{pre-isoperimetric}
\begin{aligned}
\abs{\{\psi=t\}}&=\int_{\psi=t}1\,ds\\
&=\int_{\psi=t}\abs{d\psi}^{\frac 12}\cdot\dfrac{1}{\abs{d\psi}^{\frac 12}}\,ds\\
&\leq \Big(\int_{\psi=t}\abs{d\psi}\,ds\Big)^{\frac 12}\cdot 
\Big(\int_{\psi=t}\dfrac{1}{\abs{d\psi}}\,ds\Big)^{\frac 12}\\
&=\Big(\int_{\psi=t}\dfrac{1}{\abs{d\psi}}\,ds\Big)^{\frac 12}
\end{aligned}
\end{equation}
with equality if and only if $\abs{d\psi}$ is constant on $\psi=t$. Recall that $\alpha(t)\doteq |\{\psi>t\}|$ and $\beta$ is its inverse. Taking $t=\beta(a)$ and passing to the variable $a$, \eqref{pre-isoperimetric} reads
$$
\abs{\{\psi=\beta(a)\}}^2\leq G(a).
$$
We use the well-known geometric isoperimetric inequality for spherical domains: $L^2\geq A(4\pi-A)$, where $A$ is the area and $L$ is the boundary length. We refer e.g., to \cite{chavel_isop} for a proof. Moreover, 
$\abs{\{\psi>\beta(a)\}}=a$. Therefore
$$
\abs{\{\psi=\beta(a)\}}^2\geq a(4\pi-a).
$$
We conclude that
$$
G(a)\geq a(4\pi-a)
$$
with equality if and only if each level set $\{\psi=t\}$ is a circle and $\abs{d\psi}$ is constant on $\psi=t$. This last condition implies that the level sets are parallel to one another, hence $\psi$ must be a radial function; hence $\Omega^{\star}=\Omega$ and $p=p^{\star}$. Finally, for a spherical cap we have equality everywhere, in particular
$G_{p^{\star}}(a)=a(4\pi-a)$ for all $a\in (0,M)$.

\end{proof}
From now on we will set
$$
G^{\star}(a)\doteq G_{p^{\star}}(a)=a(4\pi-a).
$$
%%%

\section{Proof of Theorem \ref{main}}\label{sec_main}
 
 From now on we assume that $\Omega$ is a smooth, simply connected domain of the round sphere $\mathbb S^2$. Let $\Omega^{\star}$  be the spherical cap with the same volume of $\Omega$. Let
 $$
M\doteq\abs{\Omega^{\star}}=\abs{\Omega}.
 $$
Theorem \ref{main} will follow by combining Gauge invariance (Lemma \ref{one}) with three results that we state in this section, namely Theorems \ref{three}, \ref{four} and \ref{barycenter}.

\medskip

The first result is a comparison between the first radial eigenvalue $\kappa_1(\Omega,A_p)$ of $\Omega$ (with any potential $A_p$) and the first radial eigenvalue $\kappa_1(\Omega^{\ast},A_{p^{\ast}})$ of $\Omega^{\star}$ with potential $A_{p^{\star}}$, where $p^{\star}$ is the center of $\Omega^{\star}$.

\begin{thm}\label{three} Let $p^{\star}$ be the center of $\Omega^{\star}$. Then
$$
\kappa_1(\Omega,A_p)\leq \kappa_1(\Omega^{\star}, A_{p^{\star}})
$$
for all $p\in\Omega$, with equality if and only if $\Omega=\Omega^{\star}$ and $p=p^{\star}$.
\end{thm}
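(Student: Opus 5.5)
The plan is to compare the two Sturm--Liouville problems of Lemma \ref{two} in the Green-level variable $t$ rather than the area variable $a$. For $u=g\circ\psi$ the numerator in \eqref{K1-u} is $\int_0^\infty (g'^2+4\pi^2g^2)\,dt$ (see \eqref{u-g}). This expression is the same for every domain and every pole, since both $\psi_p$ and $\psi_{p^\star}$ take all values in $(0,\infty)$. Only the denominator depends on the geometry:
$$
\int_\Omega u^2\,dv_g=\int_0^\infty g(t)^2\,(-\alpha_p'(t))\,dt=M g(0)^2+\int_0^\infty \alpha_p(t)\,(g^2)'(t)\,dt,
$$
where $\alpha_p(t)=|\{\psi_p>t\}|$. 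I would take $g^\star$, the $t$-profile of the first radial eigenfunction of $(\Omega^\star,A_{p^\star})$, transplant it to $\Omega$ as $u=g^\star\circ\psi_p\in\mathcal R_p(\Omega)$, and show that its mass in $\Omega$ is at least its mass in $\Omega^\star$. Since the energy is unchanged, this gives $\kappa_1(\Omega,A_p)\le\kappa_1(\Omega^\star,A_{p^\star})$.

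\textbf{Step 1: comparison of $\alpha_p$ and $\alpha^\star$.} From the proof of Lemma \ref{two}, $\alpha_p'(t)=-G_p(\alpha_p(t))$ and $\alpha^{\star\prime}(t)=-G^\star(\alpha^\star(t))$, with $\alpha_p(0)=\alpha^\star(0)=M$. Lemma \ref{lem31} gives $G_p\ge G^\star$. A standard ODE comparison argument, started at $t=0$, then yields $\alpha_p(t)\le\alpha^\star(t)$ for all $t$, with strict inequality somewhere unless $G_p\equiv G^\star$. The argument runs as follows: at a first point where the two curves touch after separating, the derivatives would have the wrong order. Equivalently, in the $a$ variable, $\beta_p(a)\le\beta^\star(a)$, since $\beta'=-1/G$.

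\textbf{Step 2: sign of $(g^{\star 2})'$.} By the integrated form of the mass, if $g^\star$ is nonincreasing in $t$ (that is, $f^\star$ nondecreasing in $a$), then $(g^{\star2})'\le0$. In that case $\alpha_p\le\alpha^\star$ gives
$$
\int_\Omega (g^\star\circ\psi_p)^2\ge\int_{\Omega^\star}(g^\star\circ\psi_{p^\star})^2,
$$
and the inequality follows. For the equality case: equality forces $\alpha_p=\alpha^\star$ on the set where $(g^{\star2})'<0$, hence $G_p=G^\star$ there. By analyticity of the level-set data, or directly, the equality statement of Lemma \ref{lem31} then gives $\Omega=\Omega^\star$ and $p=p^\star$.

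\textbf{Main obstacle: monotonicity of $f^\star$.} The difficulty is proving that $f^\star$, the first eigenfunction of \eqref{SL1} with $G=G^\star=a(4\pi-a)$, is nondecreasing. It is positive as a ground state, and $G^\star f^{\star\prime}$ vanishes at both endpoints. Moreover $(G^\star f^{\star\prime})'=(4\pi^2/G^\star-\kappa)f^\star$, where $4\pi^2/G^\star$ is convex with minimum at $a=2\pi$. So $G^\star f^{\star\prime}$ is increasing wherever $4\pi^2/G^\star>\kappa$ and decreasing on the middle interval $(a_1,a_2)$ where $4\pi^2/G^\star<\kappa$. If $a_2\ge M$, monotonicity follows at once. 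When $M$ is close to $4\pi$, however, $f^\star$ could decrease near $M$; this is exactly the large-cap phenomenon.

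\textbf{Fallback if monotonicity fails.} I would then replace the pure transplantation by a Feynman--Hellmann argument along the path $G_s=G^\star+s(G_p-G^\star)$, $s\in[0,1]$. The derivative of $\kappa_1$ along this path is
$$
\int_0^M(G_p-G^\star)\Big(f_s'^2-\frac{4\pi^2 f_s^2}{G_s^2}\Big)da .
$$
Here one must show $|G_sf_s'|\le 2\pi f_s$, i.e. $|g_s'|\le 2\pi g_s$ in the $t$ variable. I would try to prove this from the first-order Riccati equation satisfied by $w=g_s'/g_s$, namely $w'=4\pi^2-w^2-\kappa\gamma_s$ with $w(0)$ free and $w\to0$ at $\infty$, together with the Neumann condition. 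That estimate is where I expect the real work to lie.
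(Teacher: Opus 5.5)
Your primary route cannot prove the statement for all areas. If $\Phi_q:\mathbb D\to\Omega$ and $\Psi:\mathbb D\to\Omega^\star$ are conformal with $\Phi_q(0)=p$ and $\Psi(0)=p^\star$, then $\psi_p\circ\Phi_q=-\frac{1}{2\pi}\log r=\psi_{p^\star}\circ\Psi$. So your trial function $g^\star\circ\psi_p$ is the cap's first radial eigenfunction composed with the conformal map $\Psi\circ\Phi_q^{-1}:\Omega\to\Omega^\star$. This is Szeg\"o--Bandle conformal transplantation, and your mass comparison is Bandle's argument. Step 1 and the integration by parts in Step 2 are fine, but the argument needs $f^\star$ nondecreasing. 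As you observe, this fails once $a_2<M$. For caps close to the whole sphere, $\kappa_1(\Omega^\star,A_{p^\star})\to2$, so $a_2\to(2+\sqrt2)\pi<M$, and indeed $v_{11}\approx\sin r$ decreases near $\partial\Omega^\star$. This large-area regime is exactly what the theorem must cover, so everything rests on your fallback.

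The fallback is the paper's route; the Feynman--Hellmann derivative you write is the one in Lemma \ref{32}. However, you stop at the decisive estimate, and the data you attach to the Riccati equation are wrong. Since $g_s'(t)=-G_s(a)f_s'(a)$, the Neumann condition $f_s'(M)=0$ is exactly $w(0)=0$, so $w(0)$ is not free. At the other end, $\gamma_s\to0$ and $g_s$ has finite energy, so $g_s$ decays like $e^{-2\pi t}$ and $w\to-2\pi$, not $0$, as $t\to\infty$. Hence the bound $|w|<2\pi$ is sharp at infinity and must come from the flow, not from endpoint values. The missing argument is short: $w'=4\pi^2-w^2-\kappa\gamma_s<0$ wherever $|w|\ge2\pi$. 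If $w(t_0)\ge2\pi$, running the equation backwards gives $w>2\pi$ on $[0,t_0)$, contradicting $w(0)=0$. If $w(t_0)\le-2\pi$, then $w<-2\pi$ just after $t_0$, and $w'\le-(w^2-4\pi^2)$ drives $w$ to $-\infty$ at a finite time. There $g_s$ would vanish, contradicting positivity of the ground state on $(0,\infty)$. Here $t$ ranges over all of $(0,\infty)$ because $\int_0G_s^{-1}\,da=\infty$, which follows from $G_s\sim4\pi a$. The paper writes this for $R=G_sf_s'/f_s=-w$: $R(M)=0$ excludes $R\le-2\pi$, and $R\ge2\pi$ somewhere would force $R\to+\infty$ as $a\to0$, contradicting $(1/R)'\ge1/(8\pi a)$ near $0$. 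The strictness of $|R|<2\pi$ also gives strict monotonicity when $G_p>G^\star$ on a set of positive measure, and hence the equality case via Lemma \ref{lem31}. Your equality argument, tied to the transplantation route, does not reach the large-area regime.
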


\medskip

We recall that a radial eigenvalue $\kappa_1(\Omega,A_p)$ is not necessarily a Neumann eigenvalue of $\Delta_{A_p}$. The next result states that the first radial eigenvalue for $\Omega^{\star}$ with pole $p^{\star}$ is actually a Neumann eigenvalue of $\Delta_{A_{p^{\star}}}$ (hence of $\Delta$), more precisely, it is the second eigenvalue.

\begin{thm}\label{four} One has $\kappa_1(\Omega^{\star}, A_{p^{\star}})=\lambda_2(\Omega^{\star},A_{p^{\star}})$.
%
%This implies, from Theorem \ref{three}, that
%$$
%\kappa_1(\Omega,A_p)\leq %\lambda_2(\Omega^{\star},A_{p^{\%star}})
%$$
%for all $p\in\Omega$.
\end{thm}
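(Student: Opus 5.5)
The plan is to make everything explicit on the cap, using the radial Green function, and to reduce the statement to a comparison between one-dimensional ODE eigenvalues in different angular modes.

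\textbf{Step 1: explicit potential and gauge.} Take geodesic polar coordinates $(r,\theta)$ centred at $p^{\star}$, with $\Omega^{\star}=\{r<R\}$. The Green function $\psi_{p^{\star}}$ of Subsection \ref{sub:sphericalcup} depends only on $r$, and $\partial_r\psi_{p^\star}=-\frac{1}{2\pi\sin r}$. Hence
$$
A_{p^{\star}}=-2\pi\star d\psi_{p^{\star}}=d\theta,
$$
with the sign fixed by the flux-one normalization. So $\Theta_{p^{\star}}=\theta$ up to a constant, and the gauge map of Lemma \ref{one} is $v\mapsto e^{i\theta}v$.

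The class $\mathcal R_{p^{\star}}(\Omega^{\star})$ consists exactly of the radial functions $u=f(r)$. For such $u$ one has $|d^{A}u|^2=f'(r)^2+f(r)^2/\sin^2 r$. This coincides with $|d(f(r)e^{-i\theta})|^2$, and also with the Dirichlet energy density of $f(r)\cos\theta$ averaged in $\theta$. Therefore
$$
\kappa_1(\Omega^{\star},A_{p^{\star}})=\nu_1,
$$
where $\nu_1$ is the lowest eigenvalue of the angular-mode-$1$ Neumann problem
$$
-\frac{1}{\sin r}\bigl(\sin r\, f'\bigr)'+\frac{f}{\sin^2 r}=\nu f,\qquad f'(R)=0,
$$
with regularity at $r=0$. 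Equivalently, this follows from Lemma \ref{two} with $G^{\star}(a)=a(4\pi-a)$ after the change of variable $a=2\pi(1-\cos r)$.

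\textbf{Step 2: separation of variables for $\mu_2(\Omega^\star)$.} By Lemma \ref{one}, $\lambda_2(\Omega^{\star},A_{p^{\star}})=\mu_2(\Omega^{\star})$. The Neumann spectrum of the cap is the union over $k\ge 0$ of the spectra of the mode-$k$ problems, i.e.\ the equation above with $1/\sin^2 r$ replaced by $k^2/\sin^2 r$. For every $k\ge 2$, the potential $k^2/\sin^2 r$ is pointwise strictly larger than $1/\sin^2 r$, so the lowest mode-$k$ eigenvalue exceeds $\nu_1$. It remains to handle mode $0$, whose lowest eigenvalue is $0$ (the constants); I must show its second eigenvalue $\nu_{0,2}$ satisfies $\nu_{0,2}>\nu_1$.

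\textbf{Step 3: the key trick for mode $0$.} Let $h$ be the mode-$0$ eigenfunction for $\nu_{0,2}$, so $h''+\cot r\,h'+\nu_{0,2}h=0$. Differentiating and setting $w=h'$ gives
$$
w''+\cot r\,w'-\frac{w}{\sin^2 r}+\nu_{0,2}w=0,
$$
which is exactly the mode-$1$ equation. Moreover $w(0)=0$ and $w(R)=h'(R)=0$, and $w\not\equiv 0$ since $h$ is nonconstant. So $\nu_{0,2}$ is an eigenvalue of the mode-$1$ problem with \emph{Dirichlet} condition at $R$.

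The Dirichlet quotient is minimized over a subspace of the Neumann form domain, so $\nu_{0,2}\ge\nu_1$. Equality is impossible: the Neumann minimizer $f_1$ would then lie in the Dirichlet space and satisfy both $f_1(R)=0$ and $f_1'(R)=0$, forcing $f_1\equiv 0$ by ODE uniqueness. Hence $\mu_2(\Omega^{\star})=\nu_1=\kappa_1(\Omega^{\star},A_{p^{\star}})$, which proves the statement.

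The main obstacle is Step 3. The obvious comparison fails: mode $0$ has a smaller potential than mode $1$, so there is no pointwise argument, and the differentiation trick is what resolves this. A secondary point needing care is the behaviour at the singular endpoint $r=0$, namely checking that $w=h'$ is admissible for the mode-$1$ form domain (true since $h$ is smooth and even in $r$). For $R$ close to $\pi$, the inequality $\nu_1<\nu_{0,2}$ is exactly what fails to be obvious, so the argument must not use any restriction on $R$, and the one above does not.
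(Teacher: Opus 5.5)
Your proof is correct and follows essentially the paper's route: separate variables on the cap, identify $\mu_2(\Omega^{\star})$ with the lowest mode-$1$ eigenvalue $\mu_{11}(R)$, and use $A_{p^{\star}}=d\theta$ to see that the real radial profile realizes $\kappa_1(\Omega^{\star},A_{p^{\star}})$. The paper cites \cite[Proposition 6.1]{lang_laug_robin} for $\mu_{11}(R)<\mu_{02}(R)$, whereas you prove it directly by differentiating the mode-$0$ eigenfunction to get a Dirichlet mode-$1$ eigenfunction; this is the same argument the paper itself uses for the annulus in Appendix \ref{app:lemma_radial_eig}, so your version is self-contained and also handles the modes $k\ge 2$ explicitly.
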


A radial eigenfunction, as just said, does not need to be an eigenfunction of $\Delta_{A_p}$. Moreover it does not need to be orthogonal to the first eigenfunction $e^{i\Theta_p}$ of $\Delta_{A_p}$, so that it cannot be used to estimate from above $\lambda_2(\Omega,A_p)$ using the min-max principle \eqref{minmaxA}. A center of mass argument shows that for some $\bar p$ a first radial eigenfunction is indeed orthogonal to $e^{i\Theta_{\bar p}}$, implying the following result which is needed to conclude the proof of Theorem \ref{main}. 

\begin{thm}\label{barycenter} There exists $\bar p\in\Omega$ such that
$$
\lambda_2(\Omega, A_{\bar p})\leq \kappa_1(\Omega,A_{\bar p}).
$$
\end{thm}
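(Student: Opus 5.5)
We want a point $\bar p$ for which some first radial eigenfunction $u_{\bar p}=f_{\bar p}\circ\beta_{\bar p}^{-1}\circ\psi_{\bar p}$ is orthogonal in $L^2(\Omega)$ to the ground state $e^{i\Theta_{\bar p}}$ of $\Delta_{A_{\bar p}}$ (Lemma \ref{one}, with $\mu_1=0$ and constant eigenfunction). Once this holds, we take the two-dimensional space $U=\mathrm{span}\{e^{i\Theta_{\bar p}},u_{\bar p}\}$ in \eqref{minmaxA}. The Rayleigh quotient of $e^{i\Theta_{\bar p}}$ is $0$, and that of $u_{\bar p}$ is $\kappa_1(\Omega,A_{\bar p})$. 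The cross terms vanish: $e^{i\Theta_{\bar p}}$ is an eigenfunction with eigenvalue $0$ and satisfies the natural magnetic Neumann condition, so $\int\langle d^{A}e^{i\Theta},d^Au\rangle=0$ for every $u\in H^1_A$. The $L^2$ cross term vanishes by the orthogonality. Hence $\lambda_2(\Omega,A_{\bar p})\le\kappa_1(\Omega,A_{\bar p})$.

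\textbf{Step 1 (reduction to the disk).} The orthogonality condition is
$$\int_\Omega f_p(\alpha_p(\psi_p))\,e^{-i\Theta_p}\,dv_g=0.$$
We pull this back by a conformal map $\Phi_p:\mathbb D\to\Omega$ with $\Phi_p(0)=p$, using the Riemann mapping theorem (uniformization). By conformal invariance of the Green function, $\psi_p\circ\Phi_p=-\frac1{2\pi}\log r$. Since $\star$ on $1$-forms is conformally invariant in dimension two, $\Phi_p^*A_p=d\theta$, and therefore $e^{i\Theta_p}\circ\Phi_p=e^{i(\theta-\theta_0)}$. Writing $h_p(r)=f_p(\alpha_p(-\frac1{2\pi}\log r))$, which is a radial function on $\mathbb D$, the condition becomes
$$V(p):=\int_{\mathbb D}h_p(r)\,e^{-i\theta}\,J_p(z)\,dz=0,$$
where $J_p=|\Phi_p'|^2\rho$ is the pulled-back area density. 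Thus we need a weighted "center of mass" $\int_{\mathbb D}h_p(|z|)\,\bar z/|z|\,d\mu_p$ to vanish, where $d\mu_p$ is the pullback of the area measure. The normalization of $\Phi_p$ is fixed up to rotation, which only rotates $V(p)$, so $|V(p)|$ is well defined. Alternatively, we may fix one map $\Phi:\mathbb D\to\Omega$ and let $p=\Phi(w)$, composing with the disk automorphisms $z\mapsto (z+w)/(1+\bar w z)$. This defines a continuous map $w\mapsto V(w)\in\mathbb C$ on $\mathbb D$, made equivariant by tracking the rotation.

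\textbf{Step 2 (properties of the radial eigenfunction).} By Sturm--Liouville theory applied to \eqref{SL1}, $f_p$ can be chosen positive, since it is a first eigenfunction, and normalized. It is simple, so it depends continuously on $p$ because $G_p$ depends continuously on $p$. The singular endpoint gives $f_p(a)\sim c\sqrt a$ near $0$. Hence $h_p>0$ on $(0,1]$ and $w\mapsto V(w)$ is continuous.

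\textbf{Step 3 (degree argument), the main obstacle.} We study $V(w)$ as $|w|\to1$, that is, as $p\to\partial\Omega$. The claim to prove is that $\langle V(w),w\rangle>0$, or at least that $V(w)/|V(w)|$ has nonzero winding, near $|w|=1$. Then Brouwer's theorem, or a degree argument applied to $w\mapsto V(w)$, gives a zero $\bar w$ and $\bar p=\Phi(\bar w)$. Heuristically, as the pole approaches the boundary, the pulled-back measure $\mu_p$ concentrates near the boundary point $-w/|w|$ after the Möbius normalization, or more precisely in a region seen in a definite direction from the origin. Since $h_p>0$, the weighted average of $\bar z/|z|$ then points in a definite direction tied to $w$. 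Making this rigorous requires controlling $h_p$ uniformly, which is delicate because $f_p$ varies with $p$. I would first try a homotopy $t\mapsto V_t$ that replaces $h_p$ by $1$ (positivity is preserved along it) and show that no zeros occur on $|w|=1-\delta$ along the homotopy. This reduces the problem to the classical Hersch/Szegő center-of-mass lemma for $\int \bar z/|z|\,d\mu_p$, where positivity of the weights is what guarantees that the direction persists. Alternatively, one can define a continuous self-map of the closed disk in the style of Hersch and apply Brouwer.
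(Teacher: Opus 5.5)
\textbf{Gap: the boundary behaviour of $V$ (Step 3) is not established.} Your framework matches the paper's: orthogonality of a positive radial eigenfunction to the ground state $e^{i\Theta_p}$, pull-back by $\Phi\circ M_q^{-1}$, radiality of the pulled-back profile, continuity of $V$, and a Brouwer/degree argument. But the decisive step is Step 3, and the homotopy you propose does not do it.

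\begin{itemize}
\item \emph{The homotopy is circular.} Knowing that $V_1(w)=\int_{\mathbb D}e^{-i\theta}\,d\mu_p$ points in a definite direction says nothing about $V_t=(1-t)V_0+tV_1$ for $t<1$. The segment avoids $0$ only if $V_0$ is already known not to point against $V_1$, and that is the original problem.
\item \emph{Positivity of $h_p$ alone is not enough.} Write $V(w)=\int_0^1 h_p(r)\,c_w(r)\,r\,dr$, with shell coefficients $c_w(r)=\int_0^{2\pi}e^{-i\theta}\rho_w^2(r,\theta)\,d\theta$. A positive radial weight can reweight the shells arbitrarily. Moreover $h_p$ is only normalized in $L^2(\mu_p)$, so a priori it can be huge on shells carrying tiny $\mu_p$-mass.
\item \emph{What positivity actually needs.} The direction persists for every positive $h$ only if all $c_w(r)$, $r\in(0,1)$, lie in one common open half-plane. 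This is strictly stronger than the $h\equiv 1$ (Hersch-type) statement, and you neither state nor prove it.
\item \emph{A convention slip.} With your $e^{-i\Theta_p}$ the limiting field is $-\sqrt M\,\bar w$. This has degree $-1$ and does not satisfy $\langle V(w),w\rangle>0$.
\end{itemize}

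\textbf{How the paper closes the gap.} It obtains the uniform control of the radial profile that you flagged as delicate, via a Neumann-to-Steklov limit.
\begin{itemize}
\item The radialized weight $\tilde\rho_q^2$ has fixed mass $M$ and concentrates on $\partial\mathbb D$ as $|q|\to1$ (Lemma \ref{lem_radial}).
\item Hence the weighted problem \eqref{Neumann_density} converges (Proposition \ref{propGKN}) to the Steklov problem \eqref{Steklov_limit}.
\item Because the angular part $e^{i\theta}$ is fixed, $v_q(r)e^{i\theta}\to re^{i\theta}/\sqrt M$ in $H^1(\mathbb D)$, and hence uniformly by \eqref{czeroconvergence}.
\item The change of variables $w=M_q(z)$ plus dominated convergence then give $V(q)\to-\sqrt M\,q$, an inward-pointing field, so Brouwer applies.
\end{itemize}

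\textbf{A possible alternative rescue of your idea.} I believe your positivity intuition can be made rigorous by proving the shell-wise half-plane property directly, with no eigenfunction asymptotics. This needs its own argument, and it uses that $\rho$ is smooth and positive up to $\partial\mathbb D$ (Kellogg--Warschawski for smooth $\partial\Omega$). Sketch:
\begin{itemize}
\item For $r\le r_1<1$, the shell $\{|\zeta|=r\}$ is the $M_q$-image of a circle within distance $O\bigl((1-|q|)/(1-r_1)\bigr)$ of $q$. There $\rho^2$ is almost constant, so the sign is fixed by the explicit M\"obius factor $(1-|q|^2)^2|1+\bar q\zeta|^{-4}$. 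In the paper's $e^{i\theta}$ convention, its first Fourier coefficient on $|\zeta|=r$ is a negative multiple of $q$.
\item For $r\ge r_1$ with $r_1$ close to $1$, the peak of that factor at $\zeta=-rq/|q|$ dominates the bounded contribution from the opposite half of the circle.
\end{itemize}
Either route needs a lemma of this kind, and your proposal contains neither.
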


The proof of Theorem \ref{main} is achieved by combining Lemma \ref{one} and Theorems \ref{three}, \ref{four} and \ref{barycenter}:
$$
\begin{aligned}
\mu_2(\Omega)&=\lambda_2(\Omega,A_{\bar p})\quad\text{by Lemma \ref{one}}\\
&\leq \kappa_1(\Omega,A_{\bar p})\quad\text{by Theorem \ref{barycenter}}\\
&\leq \kappa_1(\Omega^{\star},A_{p_{\star}})\quad\text{by Theorem \ref{three}}\\
&=\lambda_2(\Omega^{\star},A_{p_{\star}})\quad\text{by Theorem \ref{four}}\\
&=\mu_2(\Omega^{\star})\quad\text{by Lemma \ref{one}}.
\end{aligned}
$$
Moreover, if $\mu_2(\Omega)=\mu_2(\Omega^\star)$, then $\kappa_1(\Omega,A_{\bar p})=\kappa_1(\Omega^{\star},A_{p^{\star}})$, and from Theorem \ref{three} we see that $\Omega=\Omega^{\star}$ and $p=p^{\star}$.

%%%

\section{Proof of Theorem \ref{three}}\label{sub_3}

The proof of Theorem \ref{three} is a consequence of the geometric isoperimetric inequality (Lemma \ref{lem31}), and a monotonicity result for the first eigenvalue of a weighted Sturm-Liouville problem with respect to the weight. 
\medskip

Let $G_p:(0,M)\to (0,\infty)$ be defined as in \eqref{Gp} and let $G^{\star}= a(4\pi-a)$. 
Let now $G:(0,M)\to (0,\infty)$ be a smooth, positive function, and $\kappa_1(G)$ be defined by
\begin{equation}\label{RayleghG}
\kappa_1(G):=\min_{0\ne f\in\mathcal F}\frac{\int_0^{ M}\left(G(a)f'(a)^2+4\pi^2\frac{f(a)^2}{G(a)}\right)da}{\int_0^{M}f^2(a)da},
\end{equation}
where $\mathcal F=\{f\in L^2(0,M): \sqrt{G}f',f/\sqrt{G}\in L^2(0,M)\}$ (see also Lemma \ref{two}, $i)$).  We also assume that $G$ satisfies
\begin{equation}\label{asy}
G(a)\sim 4\pi a, \quad a\to 0.
\end{equation}
Note that $\kappa_1(\Omega,A_p)=\kappa_1(G_p)$. We have the following.

\begin{lem}\label{32} If $G\geq G^\star$ on $(0,M)$ then
$$
\kappa_1(G^\star)\geq \kappa_1(G).
$$
If $G>G^\star$ on a set of positive measure, the inequality is strict.
\end{lem}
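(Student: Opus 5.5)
The idea is to pass from the area variable $a$ back to the ``Green-level'' variable $t$ used in the proof of Lemma~\ref{two}. There the Dirichlet energy $\int_0^\infty (g'^2+4\pi^2 g^2)\,dt$ is independent of the weight, and only the $L^2$ norm feels $G$. Given $G$ as in the statement (positive, with \eqref{asy}), I define $\beta_G:(0,M)\to(0,\infty)$ by
$$
\beta_G(a)=\int_a^M\frac{db}{G(b)},
$$
so that $\beta_G'=-1/G$, $\beta_G(M)=0$ and $\beta_G(a)\to+\infty$ as $a\to0$ (by \eqref{asy}). Let $\alpha_G$ be its inverse. For $f\in\mathcal F$, put $g=f\circ\alpha_G$. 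The same change of variables as in the proof of Lemma~\ref{two} gives
$$
\int_0^M\Big(Gf'^2+4\pi^2\frac{f^2}{G}\Big)da=\int_0^\infty (g'^2+4\pi^2g^2)\,dt,\qquad \int_0^M f^2\,da=\int_0^M g(\beta_G(a))^2\,da .
$$

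Now let $f^\star>0$ be a first eigenfunction for $\kappa_1(G^\star)$, and set $g^\star=f^\star\circ\alpha_{G^\star}$. As a test function for $\kappa_1(G)$ I take $f:=g^\star\circ\beta_G$. Its numerator equals that of $f^\star$ for $G^\star$. For the denominators: since $G\ge G^\star$, we have $1/G\le 1/G^\star$, hence $\beta_G(a)\le\beta_{G^\star}(a)$ for every $a\in(0,M)$. Suppose $g^\star$ is nonincreasing in $t$, that is, $f^\star$ is nondecreasing in $a$. Then $g^\star(\beta_G(a))\ge g^\star(\beta_{G^\star}(a))=f^\star(a)>0$, so $\int f^2\ge\int (f^\star)^2$. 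This gives $\kappa_1(G)\le\kappa_1(G^\star)$. For strictness: if $G>G^\star$ on a set of positive measure, then $\beta_G<\beta_{G^\star}$ on a nonempty interval near $0$. If $g^\star$ is strictly decreasing there, which follows from $G^\star f^{\star\prime}>0$ near $0$, the denominator inequality is strict.

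So the whole proof reduces to the monotonicity of $f^\star$, and this is the main obstacle. For $M>2\pi$ the radial profile of the cap eigenfunction is not obviously monotone in $r$ (cf. the discussion of \cite{lan_lau}), although here monotonicity is only needed in $a$ for the $\kappa_1$-eigenfunction. I would integrate \eqref{SL1} from $0$, using $\lim_{a\to0}G^\star f^{\star\prime}=0$:
$$
G^\star(a)f^{\star\prime}(a)=\int_0^a\Big(\frac{4\pi^2}{G^\star(b)}-\kappa_1\Big)f^\star(b)\,db .
$$
The weight $4\pi^2/G^\star-\kappa_1$ is $+\infty$ at $0$, decreases until $a=2\pi$, and then increases. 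Since the left side vanishes at $a=M$ and $f^\star>0$, the weight must be negative somewhere. If it stays negative on $(a_0,M)$ after its first zero $a_0$, then $G^\star f^{\star\prime}$ rises and then falls to $0$, so it stays positive and $f^\star$ is increasing.

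It therefore suffices to show $\kappa_1(G^\star)\ge 4\pi^2/G^\star(M)=4\pi^2/(M(4\pi-M))$ whenever $M>2\pi$. The first thing I would try is a contradiction argument. Suppose $f^\star$ has a last interior local maximum $a_1<M$. On $(a_1,M)$, $f^\star$ decreases while $G^\star f^{\star\prime}$ increases back to $0$, so the weight is positive on a terminal interval $(a_2,M)$. On that interval $f^\star$ satisfies $-(G^\star f')'=(\kappa_1-4\pi^2/G^\star)f$ with a negative potential, Neumann condition at $M$, and $f'(a_1)=0$. Comparing with the positive solution on $(a_1,M)$, via a Sturm comparison or a Picone identity, should produce a contradiction with the positivity of $f^\star$ on all of $(0,M)$. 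As a fallback, I would use the explicit Legendre-type solution of \eqref{SL1} with $G^\star=a(4\pi-a)$ to check directly that $f^{\star\prime}>0$.
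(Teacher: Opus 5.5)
There is a genuine gap. Your change of variables to the level variable $t$ is correct, and whenever $f^\star$ is nondecreasing the transplanted function $g^\star\circ\beta_G$ does prove the lemma, strictness included. Your integration of \eqref{SL1} shows this covers $M\le 2\pi$. The problem is the reduction to monotonicity of $f^\star$: it is false for large $M$, so the argument cannot cover the whole range $M\in(0,4\pi)$ that Theorem \ref{main} needs.

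Monotonicity in $a$ is the same as monotonicity in $r$. By Theorem \ref{four}, $f^\star(a)=v_{11}(r)$ with $a=2\pi(1-\cos r)$ increasing in $r$. This is exactly the property of the cap profile that the introduction says fails for large caps, and it is the obstruction in \cite{bandle2,lan_lau}.

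Concretely, evaluating \eqref{SL1} at $a=M$ with $f^{\star\prime}(M)=0$ gives $G^\star(M)f^{\star\prime\prime}(M)=\bigl(4\pi^2/G^\star(M)-\kappa_1(G^\star)\bigr)f^\star(M)$. So your sufficient condition $\kappa_1(G^\star)\ge 4\pi^2/G^\star(M)$ is also necessary, and it fails. Writing $M=2\pi(1-\cos R)$, one has $4\pi^2/G^\star(M)=1/\sin^2R$. On the other hand, the test function $\sqrt{G^\star(a)}=2\pi\sin r$ gives $\kappa_1(G^\star)<2$ for every $M\in(2\pi,4\pi)$. Hence for $M>\pi(2+\sqrt2)$, and in particular as $M\to 4\pi$, $f^\star$ is strictly decreasing on an interval ending at $M$. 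There is no Sturm/Picone contradiction to be found, and the Legendre check would only confirm the non-monotonicity. Without monotonicity, the pointwise comparison $g^\star(\beta_G)\ge g^\star(\beta_{G^\star})$ breaks down, and nothing in your argument controls the sign of the change in the denominator.

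The missing idea, which is what the paper does, is to use the transplantation only infinitesimally. You move along $G_t=(1-t)G^\star+tG$ and use the eigenfunction $f_t$ of each intermediate weight, rather than one fixed $f^\star$. Feynman--Hellmann gives $\frac{d}{dt}\kappa_1(G_t)=\int_0^M\bigl(G_t^2f_t'^2-4\pi^2f_t^2\bigr)\frac{G-G^\star}{G_t^2}\,da$, so it suffices to show $|R|<2\pi$ for $R=G_tf_t'/f_t$.

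This bound follows from the Riccati equation $R'=(4\pi^2-R^2)/G_t-\kappa_1(G_t)$:
\begin{itemize}
\item a value $R\le-2\pi$ would propagate to $R(M)<-2\pi$, contradicting $R(M)=0$;
\item a value $R\ge 2\pi$ would force $R\to+\infty$ as $a\to0$, which is incompatible with $G_t\sim 4\pi a$.
\end{itemize}

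The bound allows $R<0$, i.e.\ decreasing $f_t$. Indeed, $\bigl(G_t^2f_t'^2-4\pi^2f_t^2\bigr)'=-2\kappa_1(G_t)\,G_tf_tf_t'$, and $G_t^2f_t'^2-4\pi^2f_t^2\to0$ as $a\to0$. So $|R|<2\pi$ is equivalent to $\int_0^aG_t\,(f_t^2)'\,db>0$ for all $a$. The first-order version of your transplantation needs only this cumulative positivity, not the pointwise sign of $f_t'$, and that is what survives for large caps.
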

\begin{proof}
For $t\in [0,1]$ define $G_t:(0,M)\to (0,\infty)$ by
$$
G_t=(1-t)G^\star+tG.
$$
and let $\kappa_1(G_t)$ be the minimizer in \eqref{RayleghG} with $G=G_t$. As in the proof of Lemma \ref{two}, $\kappa_1(G_t)$ is the first eigenvalue of the Sturm-Liouville problem \eqref{SL1} (with $G_t$ replacing $G_p$); as $\kappa_1(G_t)$ is simple, we can apply the Feynman-Hellmann formula (see \cite[VII-\S4, p. 408, formula 4.56]{kato1}) which in our case gives
$$
\dfrac{d}{dt}\kappa_1(G_t)=\int_0^M\Big(G_t(a)^2f'_t(a)^2-4\pi^2f_t(a)^2\Big)\dfrac{G(a)-G^\star(a)}{G_t(a)^2}\,da
$$
where $f_t$ is the first positive eigenfunction associated to the eigenvalue $\kappa_1(G_t)$, normalized so that $\int_0^Mf_t^2=1$.
It will then be enough to show that $G_t^2f_t'^2-4\pi^2 f_t^2<0$ on $(0,M)$.  Equivalently, let us fix an arbitrary $t\in [0,1]$ and consider the smooth function $R:(0,M)\to\mathbb R$ defined by
$$
R=\dfrac{G_tf'_t}{f_t}.
$$
Since $f_t'(M)=0$ and $f_t(M)>0$, we see
\begin{equation}\label{rmzero}
R(M)=0.
\end{equation}
It is enough to show that $\abs{R(a)}<2\pi$ for all $a\in (0,M)$. We differentiate $R$ and get:
\begin{equation}\label{rprime}
R'=\dfrac{4\pi^2-R^2}{G_t}-\kappa_1(G_t)
\end{equation}
which implies that if $\abs{R}\geq 2\pi$ then $R'<0$. 

\medskip

{\bf First case.} Assume  that there exists $a_0\in (0,M)$ such that $R(a_0)\geq 2\pi$.

\smallskip

Then $R'(a_0)<0$ and one sees easily that $R$ is decreasing (and positive) on $(0,a_0)$. Therefore
 $\lim_{a\to 0}R(a)$ exists. We claim that, necessarily:
\begin{equation}\label{limit}
\lim_{a\to 0}R(a)=+\infty.
\end{equation}
In fact, since $R$ is decreasing on $(0,a_0)$ and $R(a_0)\geq 2\pi$, we see that $R-2\pi$ is uniformly bounded below by a positive constant on $(0,a_0/2)$ and, on that interval, there exists $c^2>0$ such that $4\pi^2-R^2\leq -c^2$. Integrating \eqref{rprime} on $(a,a_0/2)$ we see:
$$
\begin{aligned}
R(a_0/2)-R(a)&=\int_a^{a_0/2}\Big(\dfrac{4\pi^2-R^2(x)}{G_t(x)}-\kappa_1(G_t)\Big)\,dx\\
&<-c^2\int_{a}^{a_0/2}\dfrac{dx}{G_t(x)}.
\end{aligned}
$$
As $a\to 0$ we know that  $G_t(a)\sim 4\pi a$; hence the left-hand side diverges to $-\infty$ which proves \eqref{limit}.

\smallskip

Since $\lim_{a\to 0}R(a)=+\infty$ we get, from \eqref{rprime} (since $G_t(a)\sim 4\pi a$ when $a\to 0$):
$$
\lim_{a\to 0}\dfrac{aR'(a)}{R^2(a)}=-\dfrac{1}{4\pi}, \quad\text{hence}\quad \lim_{a\to 0}a\Big(\dfrac{1}{R}\Big)'(a)=
\dfrac{1}{4\pi}.
$$
Then there exists $\bar a>0$ such that, for  $x\in (0,\bar a)$: 
$$
\Big(\dfrac{1}{R}\Big)'(x)\geq\dfrac{1}{8\pi s}.
$$
Integrating the inequality for $x\in (a,\bar a)$ we obtain
$$
\dfrac{1}{R(\bar a)}-\dfrac{1}{R(a)}\geq \dfrac{1}{8\pi}\log(\frac{\bar a}{a}).
$$
Taking the limit as $a\to 0$ on both sides we reach a contradiction with \eqref{limit}. Therefore
$$
R(a)<2\pi,
$$
for all $a\in (0,M)$. 

\smallskip

{\bf Second case.} Assume that there exists $a_0$ such that $R(a_0)\leq -2\pi$. 

\smallskip

Then $R'(a_0)<0$ and $R'<0$ on $(a_0,M)$. Therefore $R(M)<-2\pi$ which is a contradiction with \eqref{rmzero}. Hence
$$
R(a)>-2\pi
$$
for all $a\in (0,M)$.
The proof is complete.
\end{proof}

Theorem \ref{three} now follows by taking $G=G_p$ and recalling by Lemma \ref{lem31} that $G_p\geq G^\star$.

\section{Proof of Theorem \ref{four}}\label{sub_4}

Let $\Omega^{\star}=B(0,R)$ be the geodesic disk (spherical cap) of radius $R\in (0,\pi)$ in $\mathbb S^2$, centered at $p^{\star}$. We briefly recall the description of the spectrum of the Neumann Laplacian on $\Omega^{\star}$. We use polar coordinates $(r,\theta)$ around $p^{\star}$
 and separate variables. As usual, we find a basis of $L^2(\Omega^{\star})$ of eigenfunctions in the form $u(r,\theta)=v(r)e^{ik\theta}$, where $k\in\mathbb Z$ (see e.g., \cite[\S II.5]{chavel}).  Expressing the  Laplacian $\Delta$ in polar coordinates, we see that $u$ as above is an eigenfunction of the Laplacian on $\Omega^{\ast}$ satisfying the Neumann boundary condition on $\partial\Omega^{\ast}$ if and only if the radial part $v(r)$ is an eigenfunction of the following Sturm-Liouville problem:
\begin{equation}\label{SLK}
SL_k:
\begin{cases}
-v''-\cot r \,v'+\dfrac{k^2}{\sin^2r}v=\mu v\,, & {\rm in\ }(0,R)\,,\\
\lim_{r\to 0^+}r v'(r)=v'(R)=0.
\end{cases}
\end{equation}
 For each $k\in\mathbb Z$,  problem $SL_k$ has a countable set of eigenvalues, denoted 
$$
\mu_{k1}(R)\leq \mu_{k2}(R)\leq\cdots\leq\mu_{kj}(R)\leq\cdots\nearrow+\infty
$$
with associated eigenfunctions $v_{kj}(r)$, where $j=1,2,\dots$, $k\in\mathbb Z$. Note that $\mu_{kj}=\mu_{-kj}$ for all $k\in\mathbb Z$, hence we may confine the analysis of the eigenvalues to $k=0,1,...$.
The smallest eigenvalue is $\mu_1(\Omega^{\star})=\mu_{01}(R)=0$ with eigenspace given by the constants. The second eigenvalue (lowest positive eigenvalue) is denoted $\mu_2(\Omega^{\star})$: it could be either $\mu_{02}(R)$ (radial) or  $\mu_{11}(R)$ (phase equal to $1$). In \cite[Proposition 6.1]{lang_laug_robin} it is shown that
\begin{equation}\label{second_N} \mu_{11}(R)<\mu_{02}(R)\,,\ \ \ \forall R\in(0,\pi).
\end{equation}

Therefore $\mu_2(\Omega^{\star})=\mu_{11}(R)$. In particular,  the  corresponding eigenspace is spanned by $e^{i\theta}v_{11}(r)$ and $e^{-i\theta}v_{11}(r)$. 
Using gauge invariance (Lemma \ref{one}), we see that $\lambda_2(\Omega^{\star},A_{p^{\star}})=\mu_2(\Omega^{\ast})=\mu_{11}(R)$ has multiplicity $2$; since $A_{p^{\star}}=d\theta$ and $e^{i\Theta_{p^{\star}}}=e^{i\theta}$,  the corresponding eigenspace is  spanned by $v_{11}(r)$ and $e^{2i\theta}v_{11}(r)$.

In particular, we see that $\lambda_2(\Omega^{\star},A_{p^{\star}})$ admits an eigenfunction which is real and radial (i.e., constant on the level lines of $\psi_{p^{\star}}$), and this is $v_{11}$; since $v_{11}$ does not change sign, it must be a first radial eigenfunction of $\Delta_{A_{p^{\star}}}$. In conclusion
$$
\kappa_1(\Omega^{\star},A_{p^{\star}})=\lambda_2(\Omega^{\star},A_{p^{\star}})
$$
as asserted.

\section{Proof of Theorem \ref{barycenter}}\label{sec_barycenter}

Recall that we have to prove that there exists $\bar p\in\Omega$ such that 
$$
\lambda_2(\Omega,A_{\bar p})\leq \kappa_1(\Omega,A_{\bar p})
$$
where on the right-hand side we have the lowest eigenvalue of the radial spectrum associated to the pair $(\Omega,A_{\bar p})$. To achieve that, 
consider, for each $p\in\Omega$, a unit norm eigenfunction $u_p$ (watch: of the radial eigenvalue problem \eqref{K1-u}) associated to  $\kappa_1(\Omega,A_{ p})$ so that
$$
\frac{\int_{\Omega}|d^{A_{p}}u_p|^2dv_S}{\int_{\Omega}|u_p|^2dv_S}=\kappa_1(\Omega,A_{ p}).
$$
Here $dv_S$ is the volume element of the standard round metric $g_S$ on $\mathbb S^2$. Recall that $\lambda_1(\Omega,A_p)=0$ with associated eigenfunction $e^{i\Theta_p}$; hence, if we can manage to find $p\in\Omega$ such that $\int_{\Omega}u_{p}e^{-i\Theta_{p}}\,dv_S=0$ then one can use $u_p$ as a test-function for the second eigenvalue and conclude that: 
$$
\lambda_2(\Omega,A_{ p})\leq \frac{\int_{\Omega}|d^{A_{p}}u_p|^2dv_S}{\int_{\Omega}|u_p|^2dv_S}=\kappa_1(\Omega,A_{ p}).
$$
Note that since $u_p$ is real, $\int_{\Omega}u_pe^{-i\Theta_p}dv_S=0$ if and only if $\int_{\Omega}u_pe^{i\Theta_p}dv_S=0$, and we will use this equivalent form for the orthogonality (this is not relevant for the proof, and we made this choice to simplify the presentation).

The proof of Theorem \ref{barycenter} is then reduced to the proof of the following claim:

\smallskip

\begin{claim}\label{lemmaW} The map $W:\Omega\to\mathbb C$ defined by
\begin{equation*}%\label{lemmaW} 
W(p)=\int_{\Omega}u_pe^{i\Theta_p}dv_S
\end{equation*}
has a zero.
\end{claim}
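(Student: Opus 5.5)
We work entirely in the unit disk, via the Riemann mapping theorem. Fix a conformal diffeomorphism $\Phi:\Omega\to\mathbb D$. For $z\in\mathbb D$ let $M_z(w)=\frac{w-z}{1-\bar z w}$ be the disk automorphism sending $z$ to $0$. For $p\in\Omega$ put $\phi_p=M_{\Phi(p)}\circ\Phi$, a conformal map $\Omega\to\mathbb D$ with $\phi_p(p)=0$.

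\textbf{Step 1: explicit form of $\Theta_p$ and $u_p$.} By the conformal invariance of the Green function and the formula for $\psi_0$ on $\mathbb D$, we have $\psi_p=-\frac1{2\pi}\log|\phi_p|$. Since $A_0=d\theta$ on $\mathbb D$, we get $A_p=\phi_p^{\star}d\theta$. Hence $e^{i\Theta_p}=c_p\,\phi_p/|\phi_p|$ with $|c_p|=1$, the constant coming from the base point $x_0$. Multiplying $W(p)$ by $\bar c_p$ does not affect its zeros, so we may take $e^{i\Theta_p}=\phi_p/|\phi_p|$.

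The first radial eigenfunction $u_p=f_p\circ\psi_p$ comes from the first Sturm--Liouville eigenfunction of \eqref{SL1}. It can be taken strictly positive, and it is unique by simplicity of $\kappa_1$. Writing $\rho$ for the conformal factor of $g_S$ pulled back by $\Phi^{-1}$ and changing variables $w=\Phi(x)$, the zeros of $W$ are the zeros of
\[
\widetilde W(z)=\int_{\mathbb D} h_z(w)\,\frac{M_z(w)}{|M_z(w)|}\,\rho(w)\,dA(w),\qquad z\in\mathbb D,
\]
where $h_z(w)=u_{\Phi^{-1}(z)}(\Phi^{-1}(w))>0$. It suffices to find a zero of $\widetilde W$.

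\textbf{Step 2: continuity.} Next we show that $z\mapsto\widetilde W(z)$ is continuous. The function $G_p$ depends continuously on $p$, because $\psi_p$ does away from the pole and the level sets vary smoothly. Since $\kappa_1$ is simple, the normalized positive eigenfunction $f_p$ depends continuously on $p$ in $L^2$, and hence so does $u_p$ in $L^2(\Omega)$. Also $\phi_p/|\phi_p|$ is bounded by $1$ and depends continuously on $p$ almost everywhere, so dominated convergence gives continuity of $\widetilde W$.

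\textbf{Step 3: boundary behaviour and degree.} The key identity is that for $|\zeta|=1$,
\[
\frac{w-\zeta}{1-\bar\zeta w}=-\zeta \qquad\text{for all } w\ne\zeta.
\]
Therefore, as $z\to\zeta$, $M_z(w)/|M_z(w)|\to-\zeta$ locally uniformly on $\overline{\mathbb D}\setminus\{\zeta\}$.

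Split $\widetilde W(z)$ into the integral over $D_\delta=\mathbb D\cap B(\zeta,\delta)$ and the integral over its complement.
\begin{itemize}
\item On the piece $D_\delta$ the integrand has modulus at most $h_z\rho$. By Cauchy--Schwarz and $\int u_p^2=1$, this piece is bounded by $|\Phi^{-1}(D_\delta)|^{1/2}$, which is small uniformly in $z$.
\item On the complement, $\widetilde W(z)+\zeta\int h_z\rho$ is small compared with $\int h_z\rho=\int_\Omega u_p$.
\end{itemize}
This yields $\operatorname{Re}\big(\bar z\,\widetilde W(z)\big)<0$ for $|z|\ge 1-\eta$, provided we have a uniform lower bound $\int_\Omega u_p\,dv_S\ge c>0$.

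With that inequality in hand, the map $z\mapsto -\widetilde W(z)$ on the circle $|z|=1-\eta$ is homotopic to the identity through nonvanishing maps, for instance via $t(-\widetilde W)+(1-t)z$. So it has degree $1$, and Brouwer's theorem, equivalently a degree argument, gives a zero of $\widetilde W$ inside. This proves Claim \ref{lemmaW}.

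\textbf{Main obstacle.} The hard part is the uniform lower bound $\int_\Omega u_p\ge c$ as $p\to\bd\Omega$, so that the mass of $u_p$ cannot escape into the region where the direction of $M_z$ is uncontrolled. I would first obtain a uniform $L^\infty$ bound on $u_p$, say $\|u_p\|_\infty\le C$. Then $1=\int u_p^2\le C\int u_p$. For the $L^\infty$ bound I would use the Sturm--Liouville equation together with $\kappa_1(\Omega,A_p)\le\kappa_1(\Omega^\star,A_{p^\star})$ (Theorem \ref{three}), the bound $G_p\ge G^\star$ from Lemma \ref{lem31}, and the monotonicity control $|G_pf_p'/f_p|<2\pi$ proved in Lemma \ref{32}. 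The latter bounds the logarithmic derivative of $f_p$ and hence its oscillation relative to its $L^2$ norm.
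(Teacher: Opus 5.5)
Your argument is correct in outline and takes a genuinely different route from the paper's. The paper proves more: $V$ extends continuously to $\overline{\mathbb D}$ with $V(q)=-\sqrt M\,q$ on $\partial\mathbb D$. To get this it shows that the radialized weight $\tilde\rho_q^2$ concentrates on $\partial\mathbb D$ (Lemma~\ref{lem_radial}). It then uses the Girouard--Karpukhin--Lagac\'e convergence to the Steklov problem (Proposition~\ref{propGKN}) to obtain $v_q\to r/\sqrt M$, first in $H^1$ and then uniformly. Only after that does it pass to the limit using $M_q(z)\to-e^{i\gamma}$.

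You use the same M\"obius observation, but you only establish the inward-pointing condition on a circle $|z|=1-\eta$, which is all the degree argument needs. Uniformity in the direction $\zeta=z/|z|$ follows from the identity $M_z(w)+\zeta=(1-|z|)(w+\zeta)/(1-\bar z w)$. Near $\zeta$, where the phase of $M_z$ is uncontrolled, you need only Cauchy--Schwarz and $\|u_p\|_{L^2}=1$, with no information on the shape of $u_p$. Your route therefore bypasses the Neumann-to-Steklov asymptotics and the identification of the limiting eigenfunction branch, and needs only integrability of the conformal factor. The cost is that you do not get the explicit boundary values of $V$, nor the convergence of the radial spectrum to the Steklov spectrum. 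Everything reduces to a uniform bound $\int_\Omega u_p\,dv_S\ge c>0$.

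That step is easy, but not for the reason you give. From $|G_pf_p'/f_p|<2\pi$ and $G_p\ge G^\star$ you only get $|(\log f_p)'|\le 2\pi/(a(4\pi-a))$, which behaves like $1/(2a)$ and is not integrable at $a=0$. Profiles such as $m\min\{(a/a_*)^{1/2},(a_*/a)^{1/2}\}$ satisfy this bound and have unit $L^2$ norm for suitable $m$, yet $m\to\infty$ as $a_*\to0$. So logarithmic-derivative control alone gives no $L^\infty$ bound; what excludes such profiles is the potential term of the energy.

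Write $u_p=g\circ\psi_p$ with $g\in H^1(0,\infty)$ as in \eqref{u-g}. Then $g(t)\to0$ as $t\to\infty$, and $2|gg'|\le\frac1{2\pi}(g'^2+4\pi^2g^2)$, so
\[
g(t)^2\le\int_t^\infty 2|g||g'|\,ds\le\frac1{2\pi}\int_0^\infty\big(g'^2+4\pi^2g^2\big)\,ds=\frac{\kappa_1(\Omega,A_p)}{2\pi}.
\]
By Theorem~\ref{three}, $\|u_p\|_\infty^2\le\kappa_1(\Omega^\star,A_{p^\star})/(2\pi)$ uniformly in $p$. Then $1=\int_\Omega u_p^2\le\|u_p\|_\infty\int_\Omega u_p$ gives $\int_\Omega u_p\,dv_S\ge\sqrt{2\pi/\kappa_1(\Omega^\star,A_{p^\star})}$.

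Your bound from Lemma~\ref{32} also works, via $|(f_p^2)'|<4\pi f_p^2/G_p$, but only together with $\int_0^M 4\pi^2 f_p^2/G_p\,da\le\kappa_1(\Omega,A_p)$. This is the same elementary estimate the paper uses in \eqref{czeroconvergence}. With it, your proof is complete.
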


\medskip

\subsection{First step: apply the Uniformization Theorem.} 
It will be convenient to see $W$ as a function on the unit disk: this is done by mapping $\Omega$ conformally onto the unit disk. We then apply Brouwer fixed point theorem to get the result.

\smallskip
Let us fix once and for all a reference conformal map
$$
\Phi:\mathbb D\to\Omega
$$
so that $(\Omega,g_S)$ is isometric to $(\mathbb D,\rho^2 g_E)$ for a certain conformal factor $\rho^2$, where $g_S$ is the round metric on $\mathbb S^2$ and $g_E$ is the Euclidean metric on $\mathbb D$ (the Uniformization Theorem guarantees the existence of such a map). Therefore $\Phi^\star g_S=\rho^2g_E$. We will denote the volume elements of $g_S$ and $g_E$ as $dv_S$ and $dv_E$, respectively. Note that $\rho$ is smooth and positive on $\mathbb D$. Next, we use the conformal group of the unit disk: for $q\in\mathbb D$, we consider the M\"obius map $M_q:\mathbb D\to\mathbb D$ (which is a conformal automorphism of the disk):
$$
M_q(z)=\dfrac{z-q}{1-\bar qz}\quad\text{with inverse}\quad M_q^{-1}(w)=
\dfrac{w+q}{1+\bar q w}.
$$

\begin{lem}\label{Phiq} The following facts hold:
\begin{enumerate}[i)]
\item Let $p\in\Omega$ and $q=\Phi^{-1}(p)$. Then the map
$$
\Phi_q\doteq\Phi\circ M_q^{-1}:\mathbb D\to\Omega
$$
is conformal and sends the origin to $p$: $\Phi_q(0)=p$.
\item $(\Omega,g_S)$ is isometric to $(\mathbb D,\rho_q^2g_E)$, where $\rho_q^2$ is the conformal factor of $\Phi_q^*g_S$. Explicitly,
\begin{equation}\label{conf_q}
\rho_q^2(z)=\rho^2\left(\frac{z+q}{1+\bar q z}\right)\frac{(1-|q|^2)^2}{|1+\bar q z|^4}
\end{equation}
and in particular
\begin{equation}\label{conf_q_2}
\rho_q^2\, dv_E=(M_q^{-1})^\star(\rho^2\,dv_E)
\end{equation}
\end{enumerate}
\end{lem}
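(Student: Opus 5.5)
The plan is to verify both items directly from the explicit form of the Möbius maps and the chain rule for pullbacks of conformal metrics.

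For i), note first that $M_q$ is a holomorphic bijection of $\mathbb D$ onto itself when $|q|<1$. Indeed, for $|z|=1$ one has $|z-q|=|1-\bar q z|$, so $M_q$ preserves the unit circle. The inverse $w\mapsto (w+q)/(1+\bar q w)$ is again of the same form (with $q$ replaced by $-q$), hence also a holomorphic automorphism. Then $\Phi_q=\Phi\circ M_q^{-1}$ is a composition of conformal diffeomorphisms, so it is a conformal diffeomorphism $\mathbb D\to\Omega$. Moreover $M_q^{-1}(0)=q$, so $\Phi_q(0)=\Phi(q)=p$.

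For ii), I would compute $\Phi_q^\star g_S=(M_q^{-1})^\star\Phi^\star g_S=(M_q^{-1})^\star(\rho^2 g_E)$. For a holomorphic map $h$ one has $h^\star g_E=|h'|^2g_E$. With $h=M_q^{-1}$,
$$h'(z)=\frac{(1+\bar q z)-\bar q(z+q)}{(1+\bar q z)^2}=\frac{1-|q|^2}{(1+\bar q z)^2}.$$
Therefore $(M_q^{-1})^\star(\rho^2g_E)=\rho^2(h(z))\,|h'(z)|^2 g_E$, which is exactly \eqref{conf_q}. Since $\Phi_q$ is a diffeomorphism, it is an isometry from $(\mathbb D,\rho_q^2g_E)$ onto $(\Omega,g_S)$.

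Finally, \eqref{conf_q_2} follows because in two dimensions the area form of $\rho^2 g_E$ is $\rho^2\,dv_E$. Pulling back by the orientation-preserving map $h$ multiplies $dv_E$ by the Jacobian $|h'|^2$, which gives $(M_q^{-1})^\star(\rho^2dv_E)=\rho^2\circ h\,|h'|^2dv_E=\rho_q^2dv_E$. No step is a real obstacle. The only care needed is the derivative computation and checking that $h$ preserves orientation, which holds since it is holomorphic, so the Jacobian equals $|h'|^2$ with no absolute-value sign issue.
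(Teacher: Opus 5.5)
Your proposal is correct and follows essentially the paper's argument: the paper likewise gets i) from $M_q^{-1}(0)=q$, and ii) from the chain $\Phi_q^\star g_S=(M_q^{-1})^\star(\rho^2 g_E)=(\rho^2\circ M_q^{-1})\,|(M_q^{-1})'|^2 g_E$. You additionally verify that $M_q$ is a disk automorphism, compute $(M_q^{-1})'(z)=(1-|q|^2)/(1+\bar q z)^2$ explicitly, and justify the area-form identity \eqref{conf_q_2}; these are details the paper leaves implicit.
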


\begin{proof}
Point $i)$ is immediate by observing that $M_q^{-1}(0)=q$.  Point $ii)$ follows from a straightforward calculation. In fact
$$
\begin{aligned}
\Phi^{\star}_qg_S&=(M_q^{-1})^{\star} \Phi^{\star}g_S\\
&=(M_q^{-1})^{\star}\rho^2g_E\\
&=(\rho^2\circ M_q^{-1})(M_q^{-1})^{\star}g_E
\end{aligned}
$$
and 
$$
(M_q^{-1})^{\star}g_E=\abs{(M_q^{-1})'}^2g_E=\frac{(1-|q|^2)^2}{|1+\bar q z|^4}g_E.
$$
\end{proof}
Since  conformal maps preserve the Green function, we see that the pull-back by $\Phi_q$ will take the Green function of $\Omega$ at $p$ to the Green function of $\mathbb D$ at the origin, which is explicit and is denoted by $\psi_0$; recall that in polar coordinates 
$$
\psi_0(r)=-\dfrac{1}{2\pi}\log r.
$$
Likewise, the pull-back of the potential one-form $A_p$ will be $d\theta$, independently on $q$.
We summarize these facts in the following lemma.

\begin{lem} \label{Phiqtwo} Fix $p\in\Omega$. Let $q\in\mathbb D$ such that $p=\Phi(q)$, and let $\Phi_q$ as in Lemma \ref{Phiq}. Then
\begin{enumerate}[i)]
\item $\Phi_q^{\star}\psi_p=-\dfrac{1}{2\pi}\log r$.
\item $\Phi_q^{\star}A_p=d\theta$.
\item $\Phi_q^{\star}e^{i\Theta_p}=e^{i\theta}$.
\end{enumerate}
\end{lem}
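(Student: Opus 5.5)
The statement to prove is Lemma \ref{Phiqtwo}. All three items follow from the conformal invariance of the Green function and the definitions of $A_p$ and $\Theta_p$, so the plan is to prove i), ii), iii) in that order, each by pulling back along $\Phi_q$.

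For i), Lemma \ref{Phiq} i) says that $\Phi_q:\mathbb D\to\Omega$ is a conformal diffeomorphism with $\Phi_q(0)=p$. By the conformal invariance of the Green function (part ii) of the Green function lemma), $\Phi_q^\star\psi_p$ is the Green function of $\mathbb D$ with pole $\Phi_q^{-1}(p)=0$. Part i) of that lemma then gives $\Phi_q^\star\psi_p=-\frac{1}{2\pi}\log r$. The one point to check is that the Dirichlet condition transfers. Since $\Phi$ is a Riemann map of a smooth Jordan domain, it extends as a homeomorphism to the boundary, so boundary values vanish. Likewise the logarithmic singularity and harmonicity are preserved, because in dimension two the Laplacian is conformally covariant: $\Delta_{\rho^2g}=\rho^{-2}\Delta_g$. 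Hence harmonic functions pull back to harmonic functions and the total flux $1$ of the pole is unchanged.

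For ii), the key fact is that in dimension two the Hodge star on $1$-forms depends only on the conformal class and the orientation. Since $\Phi_q$ is orientation preserving (it is holomorphic), $\Phi_q^\star(\star_{g_S}\,\omega)=\star_{g_E}\Phi_q^\star\omega$. Therefore
$$\Phi_q^\star A_p=-2\pi\star_{g_E} d(\Phi_q^\star\psi_p)=-2\pi\star_{g_E}d\psi_0 .$$
Part iii) of the lemma on $A_p$ gives $A_0=d\theta$, so $\Phi_q^\star A_p=d\theta$. A direct check is also easy: $d\psi_0=-\frac{1}{2\pi}\frac{dr}{r}$, and with the paper's orientation convention $\star dr=r\,d\theta$ up to the chosen sign, which is already encoded in that lemma.

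For iii), use that $\Theta_p(x)=\int_{c_x}A_p$, so by change of variables $\Phi_q^\star\Theta_p(z)=\int_{\Phi_q^{-1}\circ c}\Phi_q^\star A_p=\int d\theta$ along a path from $z_0=\Phi_q^{-1}(x_0)$ to $z$. This equals $\theta(z)-\theta(z_0)$ modulo $2\pi$, so $\Phi_q^\star e^{i\Theta_p}=e^{-i\theta(z_0)}e^{i\theta}$. The main subtlety is this constant phase coming from the base point $x_0$. I would handle it by choosing, for each $p$, the base point $x_0=\Phi_q(z_0)$ with $\theta(z_0)=0$, for instance $x_0=\Phi_q(1/2)$. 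This is legitimate because $\Theta_p$ is defined only up to such a normalization. Alternatively, the identity can be read modulo a unimodular constant, which does not affect the vanishing of $W$ in Claim \ref{lemmaW} or any eigenfunction statement. With this normalization, item iii) holds exactly.
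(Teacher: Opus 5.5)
Your proposal is correct and follows the same route as the paper. The paper justifies the lemma only briefly, by the conformal invariance of the Green function and, implicitly, of the Hodge star on $1$-forms in dimension two, exactly as in your items i) and ii). Your treatment of iii) is a legitimate refinement that the paper glosses over: with the fixed base point $x_0$ of Section~2 one only gets $\Phi_q^{\star}e^{i\Theta_p}=e^{-i\theta(z_0)}e^{i\theta}$. Either of your two fixes settles this: normalize the base point as $x_0=\Phi_q(1/2)$, or observe that a unimodular factor does not change the zeros of $W$ (equivalently, of $V$).
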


Recall that $u_p= f_p\circ\alpha_p\circ\psi_p$ is a minimizer of \eqref{K1-u}, i.e., a first radial eigenfunction associated to 
$\kappa_1(\Omega,A_p)$. Recall also  that $f_p: (0,M)\to \mathbb R$ is a first eigenfunction of the corresponding one-dimensional problem \eqref{SL1} and $\alpha_p(t)\doteq|\{\psi_p>t\}|$. We take $f_p$ normalized by $\int_0^Mf_p^2(a)da=1$ and positive: this corresponds to $\int_{\Omega}u_p^2\,dv_S=1$, $u_p>0$. 

\smallskip

The orthogonality relation $W(p)=0$ (where  $W:\Omega\to\mathbb C$ is as in Claim \ref{lemmaW}), viewed in $\mathbb  D$, becomes the following.

\begin{lem}\label{vq} Write $p=\Phi(q)$. Then
\begin{enumerate}[i)]
\item If $v_q:\mathbb D\to\mathbb C$ is $v_q\doteq \Phi_q^{\star}u_p$ then 
$
W(p)=\int_{\mathbb D}v_qe^{i\theta}\rho^2_qdv_E.
$
\item The function $v_q$ is radial, and in fact 
$$
v_q=f_p\circ s_q
$$
where $s_q(r)=\int_{B(0,r)}\rho^2_qdv_E$ is the area of the disk of radius $r$ in the conformal metric $\rho^2_qg_E$.

\end{enumerate}
\end{lem}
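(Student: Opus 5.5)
The plan is to transport everything to the unit disk through the conformal map $\Phi_q$ of Lemma \ref{Phiq}, using Lemma \ref{Phiqtwo} to identify the pulled-back Green function, potential and gauge factor. Since $\Phi_q:(\mathbb D,\rho_q^2 g_E)\to(\Omega,g_S)$ is an isometry, integrals transform by change of variables. This gives
\[
W(p)=\int_\Omega u_p e^{i\Theta_p}\,dv_S=\int_{\mathbb D}(\Phi_q^\star u_p)(\Phi_q^\star e^{i\Theta_p})\,\rho_q^2\,dv_E ,
\]
where $dv_S$ pulls back to $\rho_q^2\,dv_E$. By Lemma \ref{Phiqtwo} iii), $\Phi_q^\star e^{i\Theta_p}=e^{i\theta}$, which yields i). One caveat: $\Theta_p$ is defined only up to an additive constant (the choice of base point $x_0$). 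So the identity should be read with the normalization for which $\Phi_q^\star\Theta_p=\theta$, or equivalently up to a unimodular constant factor. That factor does not affect the vanishing of $W$, which is all that is needed.

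For ii), I would write $u_p=f_p\circ\alpha_p\circ\psi_p$, so that $v_q=f_p\circ(\alpha_p\circ\psi_p\circ\Phi_q)$. By Lemma \ref{Phiqtwo} i), $\psi_p\circ\Phi_q=\psi_0=-\frac1{2\pi}\log r$, so $v_q$ depends only on $r$ and is radial. It remains to identify $\alpha_p(\psi_0(r))$, which is the area in $(\Omega,g_S)$ of the superlevel set $\{\psi_p>\psi_0(r)\}$.

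Under $\Phi_q$ this superlevel set is the preimage of $\{\psi_0>\psi_0(r)\}$. Since $\psi_0$ is strictly decreasing in $r$, that set is exactly $B(0,r)$. Because $\Phi_q$ is an isometry onto $(\mathbb D,\rho_q^2g_E)$, the area equals $\int_{B(0,r)}\rho_q^2\,dv_E=s_q(r)$. Hence $\alpha_p\circ\psi_0=s_q$ and $v_q=f_p\circ s_q$.

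The only point needing care is this monotonicity and level-set bookkeeping, namely that $\Phi_q^{-1}(\{\psi_p>t\})=\{\psi_0>t\}$. This follows directly from the conformal invariance of the Green function. Everything else is a change of variables.
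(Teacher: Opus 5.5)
Your proof is correct and follows the paper's argument essentially verbatim: i) is the change of variables through the isometry $\Phi_q:(\mathbb D,\rho_q^2g_E)\to(\Omega,g_S)$ together with Lemma \ref{Phiqtwo}, and ii) is the identification $\alpha_p(\psi_0(r))=s_q(r)$ via $\Phi_q^{-1}(\{\psi_p>t\})=\{r<e^{-2\pi t}\}$. Your caveat about the base point is well taken: for a fixed $x_0$ one gets $\Phi_q^\star e^{i\Theta_p}=e^{i\theta}e^{-i\arg\Phi_q^{-1}(x_0)}$, a factor the paper's Lemma \ref{Phiqtwo} iii) tacitly normalizes away, and as you say this unimodular constant is harmless because only the zeros of $W$ matter.
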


%With this at hand, Claim \ref{lemma W}

\begin{proof} Identity $i)$ is simply a conformal change of coordinates through $\Phi_q$, using Lemma \ref{Phiq} and Lemma \ref{Phiqtwo} 
$$
\begin{aligned}
W(p)&=\int_{\Omega}u_pe^{i\Theta_p}dv_S\\
&=\int_{\mathbb D}\Phi^{\star}_qu_p\Phi^{\star}_q(e^{i\Theta_p})\Phi^{\star}_q(dv_S)\\
&=\int_{\mathbb D}v_qe^{i\theta}\rho^2_qdv_E.
\end{aligned}
$$
Proof of $ii)$. Since $\psi_p\circ\Phi_q=-\dfrac{1}{2\pi}\log r$, the level line $\{\psi_p=t\}$ is mapped to $r=e^{-2\pi t}$ and then
$$
\alpha_p(t)=\abs{\psi_p>t}=\abs{r<e^{-2\pi t}}_{\rho^2_q g_E}=s_q(e^{-2\pi t}).
$$
Now:
$$
\begin{aligned}
\Phi_q^{\star}u_p&=f_p\circ\alpha_p\circ\psi_p\circ\Phi_q\\
&=f_p\circ\alpha_p\Big(-\frac{1}{2\pi}\log r\Big)\\
&=f_p\circ s_q(r)
\end{aligned}
$$
as asserted.
\end{proof}

With this at hand, we note that Claim \ref{lemmaW} becomes

\smallskip

\begin{claim}\label{lemmaV}   The function $V:\mathbb D\to \mathbb C$ defined by
\begin{equation*}%\label{lemmaV}
V(q)=\int_{\mathbb D}v_qe^{i\theta}\rho^2_q dv_E
\end{equation*}
has a zero in $\mathbb D$, where $v_q$ is as in Lemma \ref{vq}, $i)$.
\end{claim}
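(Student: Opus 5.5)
The plan is to show that $V$ has a zero via a degree (Brouwer-type) argument on the closed disk. Concretely, I will prove two things. First, $V$ is continuous on $\mathbb D$. Second, as $|q|\to 1$, $V(q)$ points essentially in the direction of $q$ (or, uniformly, opposite to it). This means $\langle V(q),q\rangle>0$ for all $|q|=r_0$ close to $1$, where $\langle\cdot,\cdot\rangle$ is the real inner product on $\mathbb C=\mathbb R^2$. Then the map $q\mapsto q-tV(q)$, or equivalently a homotopy argument on the circle $|q|=r_0$, shows that $V$ has winding number $1$ there. Hence $V$ vanishes inside. To make this a Brouwer fixed point statement, I would apply Brouwer to $F(q)=q-\varepsilon V(q)$, suitably retracted onto $\overline{B(0,r_0)}$, or simply invoke the standard lemma: a continuous vector field on a closed disk that points outward on the boundary has a zero.

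\textbf{Continuity.} $\rho_q^2$ depends smoothly on $q$ by \eqref{conf_q}. Hence $s_q(r)$ is continuous in $(q,r)$, and so is $G_p$ (it is determined by $s_q$: $G_p(a)=2\pi r\, s_q'(r)\cdot\frac{r}{\ldots}$, explicitly via $r=s_q^{-1}(a)$, and is an average of $\rho_q^2$ on circles). The first Sturm–Liouville eigenvalue $\kappa_1(G_p)$ is simple, so the normalized positive eigenfunction $f_p$ depends continuously on $G_p$, for instance in $L^2$ and locally uniformly. Therefore $v_q=f_p\circ s_q$ and $V(q)$ are continuous in $q$.

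\textbf{Boundary behaviour.} I change variables back via $w=M_q^{-1}(z)$ using \eqref{conf_q_2}:
$$V(q)=\int_{\mathbb D} (f_p\circ s_q)(|M_q(w)|)\,\frac{M_q(w)}{|M_q(w)|}\,\rho^2(w)\,dv_E(w).$$
As $q\to\zeta\in\partial\mathbb D$, $M_q(w)\to -\zeta$ locally uniformly on $\mathbb D$, so the phase factor tends to $-\zeta$ on most of the mass of $\rho^2dv_E$. The delicate part is controlling the weight $f_p(s_q(|M_q(w)|))$. Since $f_p>0$ and $\int_0^M f_p^2=1$, I would argue as follows. The set where $|M_q(w)|\le 1-\delta$ has $\rho^2$-area $s_q(1-\delta)$, which tends to $0$ as $q\to\zeta$ because that set shrinks to a small region near the boundary point $\zeta$. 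Hence almost all area sits at $a=s_q(r)$ close to $M$. Then $\langle V(q),-\zeta\rangle\ge \int f_p(a)\cos(\text{angle})\,da$ minus small errors, and the main term is $\int_{a\text{ near }M}f_p$, which is positive. A bound $|f_p|\le C/\sqrt{\cdot}$ or a uniform $L^\infty$ bound for $f_p$ near $M$ (from the Sturm–Liouville equation, with $G_p$ bounded below on $[M/2,M]$ by Lemma \ref{lem31}) makes the error from the small-area region negligible by Cauchy–Schwarz. This gives $\langle V(q),q\rangle<0$ uniformly on $|q|=r_0$, so $-V$ points outward and has a zero.

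\textbf{Main obstacle.} The hard step is uniformity in this limit. As $q$ approaches the boundary the profile $G_p$ changes, and I need that $f_p$ does not concentrate its $L^2$ mass on small $a$. I would first try to establish a uniform bound for $f_p$ over all $p$, using that $f_p$ is positive and $\kappa_1(G_p)\le\kappa_1(G^\star)$ by Theorem \ref{three}, together with the monotonicity arguments on $R=G f'/f$ from Lemma \ref{32}. These give $|G_pf_p'|<2\pi f_p$, so $f_p$ cannot vary too wildly in the variable $\int da/G_p$.
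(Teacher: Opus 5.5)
Your route differs from the paper's and its skeleton is sound. However, the estimate that carries all the content is left open, and the mechanism you propose for it does not deliver it.

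\textbf{A correction that also simplifies your boundary analysis.} By \eqref{conf_q_2} and the definition of $s_q$, the map $w\mapsto a(w)\doteq s_q(|M_q(w)|)$ pushes $\rho^2\,dv_E$ forward \emph{exactly} onto $da$ on $(0,M)$. So it is not true that almost all area sits at $a$ close to $M$: for every $q$, half of it lies in $\{a<M/2\}$. Only the phase $e^{i\phi_q(w)}\doteq M_q(w)/|M_q(w)|$ degenerates, tending to $-\zeta$ locally uniformly as $q=|q|\zeta$ with $|q|\to1$. Since $\int_{\mathbb D}f_p(a(w))^2\rho^2\,dv_E=\int_0^Mf_p^2\,da=1$, Cauchy--Schwarz gives
\[
\Big|V(q)+\zeta\int_0^Mf_p\,da\Big|\leq\Big(\int_{\mathbb D}\big|e^{i\phi_q}+\zeta\big|^2\rho^2\,dv_E\Big)^{1/2}\longrightarrow0
\]
uniformly in $\zeta$ as $|q|\to1$. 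No $L^\infty$ information on $f_p$ is needed for the error terms.

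\textbf{The gap.} Everything therefore hinges on a bound $\int_0^Mf_p\,da\geq c>0$ uniform in $p$, i.e.\ on ruling out that the $L^2$ mass of $f_p$ escapes to $a=0$. The inequality $|G_pf_p'|<2\pi f_p$ from the proof of Lemma \ref{32}, together with $G_p\geq G^\star$, cannot do this. It only gives $f_p(a)\leq Cf_p(M/2)a^{-1/2}$ near $0$, which is not square integrable. Indeed, positive profiles $c_\eta a^{-1/2+\eta}$ with $c_\eta\to0$ (adjusted near $a=M$) satisfy $|G^\star f'|<2\pi f$ and $\int_0^Mf^2=1$, while $\int_0^Mf\to0$.

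\textbf{How to close it.} Use the energy bound, which you list but never use. At the minimizer, $\int_0^MG_pf_p'^2\,da\leq\kappa_1(\Omega,A_p)\leq\kappa_1(G^\star)$ by Theorem \ref{three}, and $G_p\geq G^\star\geq(4\pi-M)a$ by Lemma \ref{lem31}. Let $B$ be the Euclidean disk of area $M$ and set $u_p(x)\doteq f_p(\pi|x|^2)$. Then $\int_Bu_p^2=1$ and
\[
\int_B|\nabla u_p|^2=\int_0^M4\pi af_p'^2\,da\leq\frac{4\pi\kappa_1(G^\star)}{4\pi-M}.
\]
Sobolev gives $\|u_p\|_{L^4(B)}\leq C$, and the interpolation $\|u\|_2\leq\|u\|_1^{1/3}\|u\|_4^{2/3}$ yields $\int_0^Mf_p\,da=\|u_p\|_{L^1(B)}\geq C^{-2}$ for all $p$. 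Hence $\langle V(q),q\rangle<0$ on $|q|=r_0$ for $r_0$ near $1$, and the lemma of Appendix \ref{fixpoint}, rescaled to $\overline{B(0,r_0)}$, produces the zero.

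\textbf{Comparison with the paper.} With this repair your argument is a genuine alternative. The paper instead shows that $V$ extends continuously to $\overline{\mathbb D}$ with $V(q)=-\sqrt Mq$. It does this by proving, via Lemma \ref{lem_radial}, Proposition \ref{propGKN} and \eqref{czerohone}, that $v_q\to r/\sqrt M$ uniformly, i.e.\ that the radial problem converges to the Steklov problem. In your notation this amounts to $\int_0^Mf_p\,da\to\sqrt M$. Your version extracts only the sign, but with elementary tools, and it bypasses the Neumann-to-Steklov limit and the spectral continuity results entirely. The paper's route buys the exact boundary values and the asymptotic description of the radial spectrum recorded in the Remark after Proposition \ref{propGKN}.
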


\medskip

In the next steps we will prove the following fact. 

%The proof of Lemma \ref{lemmaV}, and hence of Theorem \ref{barycenter}, is complete once we show the following fact.

\begin{thm}\label{final} Let $V(q)=\int_{\mathbb D}v_qe^{i\theta}\rho^2_q\,dv_E$. Then:
\begin{enumerate}[i)]
\item $V$ is continuous in $\mathbb D$.
\item $V$ extends to a continuous function on $\overline{\mathbb D}$, and 
$
V(q)=-\sqrt Mq
$
for all $q\in\bd\mathbb D$.
\end{enumerate}
\end{thm}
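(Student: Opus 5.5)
Rewrite $V$ using the change of variables \eqref{conf_q_2}. Put $w=M_q^{-1}(z)$. Then $v_q(z)=f_p(s_q(|z|))$, $e^{i\theta}=z/|z|=M_q(w)/|M_q(w)|$ and $\rho_q^2dv_E=(M_q^{-1})^\star(\rho^2 dv_E)$, so
$$
V(q)=\int_{\mathbb D} f_{\Phi(q)}\big(s_q(|M_q(w)|)\big)\,\frac{M_q(w)}{|M_q(w)|}\,\rho^2(w)\,dv_E(w).
$$
Here $\rho^2dv_E$ is a fixed finite measure of total mass $M$, independent of $q$. All the $q$-dependence sits in the bounded phase $M_q(w)/|M_q(w)|$ and in the radial factor $F_q(w)\doteq f_{\Phi(q)}(s_q(|M_q(w)|))$. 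Moreover $s_q(|M_q(w)|)$ is the $\rho^2dv_E$-area of the hyperbolic disk $\{w':|M_q(w')|<|M_q(w)|\}$.

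\textbf{Continuity in $\mathbb D$.} For $q$ in a compact subset of $\mathbb D$, the map $q\mapsto G_{\Phi(q)}$ is continuous locally uniformly on $(0,M)$. Indeed $s_q$ depends smoothly on $q$, and $G_p(a)=1/|\beta_p'(a)|$ is computed from $\alpha_p(t)=s_q(e^{-2\pi t})$. In addition $G_{\Phi(q)}(a)\sim 4\pi a$ uniformly as $a\to0$, since $\rho_q$ is smooth and positive near $0$. Since $\kappa_1$ is simple, standard Sturm--Liouville perturbation theory (or a compactness argument on the Rayleigh quotient \eqref{RayleghG}, using uniqueness of the normalized positive minimizer) gives that $f_{\Phi(q)}$ depends continuously on $q$ in $L^2(0,M)$. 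Then $V(q)=\int_0^M f_{\Phi(q)}(a)\,\big(\text{average of } e^{i\theta}\text{ over the level } \{s_q=a\}\big)\,da$ is continuous by Cauchy--Schwarz and dominated convergence.

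\textbf{Boundary behaviour.} This is the main obstacle. Let $q\to q_0\in\partial\mathbb D$. For every fixed $w\in\mathbb D$, $M_q(w)\to -q_0$, so the phase tends to $-q_0$. Also, for any fixed $a_0<M$, the hyperbolic disks centered at $q$ of $\rho^2dv_E$-area $a_0$ escape to $q_0$, and they exhaust $\mathbb D$ minus a set of small measure as the area approaches $M$. Hence $s_q(|M_q(w)|)\to M$ for each fixed $w$, and $F_q(w)\to \lim f_{\Phi(q)}$ near $a=M$. So I must show that the normalized eigenfunctions $f_{\Phi(q)}$ concentrate. Intuitively $G_{\Phi(q)}$ blows up on compact subsets of $(0,M)$, because the levels of $\psi_p$ become long thin curves as $p\to\partial\Omega$. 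Then a Rayleigh-quotient argument forces $f_{\Phi(q)}$ to converge in $L^2(0,M)$ to the constant $1/\sqrt M$, with $\kappa_1\to 0$. The plan is:
\begin{enumerate}[(a)]
\item Show $G_{\Phi(q)}\to\infty$ locally uniformly on $(0,M)$, via the Cauchy--Schwarz bound $G\ge L^2$ from Lemma \ref{lem31}. The level curves are Euclidean circles, shrinking to $q_0$ in $w$-coordinates but, after $M_q$, occupying a fixed fraction of the area, so their spherical length must blow up.
\item Test the constant function, which is admissible since the $4\pi^2/G$ term is integrable. This gives $\kappa_1\le \frac{4\pi^2}{M}\int_0^M G^{-1}\to0$, where convergence of the integral uses (a) together with $G\ge G^\star$, and $G^\star$ has a $1/a$ singularity only logarithmically weighted against $f^2$.
\item Conclude that $\int G f'^2\to0$, and hence $f_{\Phi(q)}\to 1/\sqrt M$ in $L^2$, using positivity and normalization.
\end{enumerate}

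Finally, the pointwise convergence of the phase together with $L^2$ convergence of $F_q$ gives
$$
V(q)\to \int_{\mathbb D}\tfrac1{\sqrt M}(-q_0)\rho^2dv_E=-\sqrt M\,q_0 .
$$
Here one writes $F_q=F_q-M^{-1/2}+M^{-1/2}$ and controls the first term by Cauchy--Schwarz, since $\int_{\mathbb D}|F_q-M^{-1/2}|^2\rho^2dv_E=\|f_{\Phi(q)}-M^{-1/2}\|_{L^2(0,M)}^2$. This yields the continuous extension with $V=-\sqrt M q$ on $\partial\mathbb D$.
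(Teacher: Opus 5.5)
\textbf{The gap is in steps (b)--(c).} These steps are not merely unproved; they are false. The constant function is not admissible in \eqref{RayleghG}. Since $G_p(a)\sim 4\pi a$ as $a\to 0$, one has $\int_0^M G_p^{-1}\,da=+\infty$ for every $p$; a $1/a$ singularity is not integrable. Radial test functions must vanish at the pole because $A_p$ has flux $1$, and indeed $v_q(0)=0$. Moreover, $\kappa_1$ does not tend to $0$. For the normalized minimizer put $g(t)=f(\alpha_p(t))$. Since $g\in H^1(0,\infty)$,
$g(t)^2\le\int_t^\infty 2|g(s)g'(s)|\,ds\le\frac{1}{2\pi}\int_0^\infty(g'^2+4\pi^2g^2)\,ds=\frac{\kappa_1}{2\pi}$.
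Integrating $f^2\le\kappa_1/2\pi$ over $(0,M)$ gives $\kappa_1(\Omega,A_p)\ge 2\pi/M$ for every $p$. In fact $\kappa_1\to 2\pi/M$, the first positive eigenvalue of \eqref{Steklov_limit}. For the limit profile $v=r/\sqrt M$, each of the two terms of the energy equals $\pi/M$. So (c), which claims $\int_0^M Gf'^2\,da\to 0$, fails as well.

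The justification of (a) is also wrong. In $w$-coordinates, the super-level set of $\psi_p$ of area $a$ is a Euclidean disk of $\rho^2dv_E$-area $a$, hence of Euclidean area at least $a/\|\rho\|_\infty^2$. It does not shrink to $q_0$; it converges to a horodisk at $q_0$, and its boundary has spherical length at most $2\pi\|\rho\|_\infty$. So $G\ge L^2$ cannot produce blow-up. For the same reason, $s_q(|M_q(w)|)$ tends to the area of the horodisk at $q_0$ through $w$, not to $M$; for $w=0$ this is the disk with diameter $[0,q_0]$. $G$ does blow up, but because $|d\psi_p|$ degenerates on most of each level curve. This is visible from $G(s_q(r))=4\pi^2r^2\tilde\rho_q^2(r)$.

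\textbf{What is sound, and a repair.} Your continuity argument in $\mathbb D$ is fine. Your target, $f_{\Phi(q)}\to M^{-1/2}$ in $L^2(0,M)$, is true. Your final assembly is also correct: the push-forward of $\rho^2dv_E$ under $w\mapsto s_q(|M_q(w)|)$ is Lebesgue measure on $(0,M)$, and then Cauchy--Schwarz together with $M_q(w)\to -q_0$ concludes. The inequality above gives a cheap repair:
\begin{enumerate}[(1)]
\item Test $e^{-2\pi\psi_p}\in\mathcal R_p(\Omega)$, i.e.\ $v=r$ in the disk model. Its energy is exactly $2\pi$, while $\int_\Omega e^{-4\pi\psi_p}dv_S=\int_{\mathbb D}|M_q|^2\rho^2\,dv_E\to M$ by dominated convergence. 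Combined with the lower bound, $\kappa_1\to 2\pi/M$.
\item The bound $\sup f\le\sqrt{\kappa_1/2\pi}$ gives $\int_0^M f\,da\ge\sqrt{2\pi/\kappa_1}$.
\item Hence $\|f-M^{-1/2}\|^2_{L^2(0,M)}=2-2M^{-1/2}\int_0^M f\,da\le 2-2\sqrt{2\pi/(M\kappa_1)}\to 0$.
\end{enumerate}

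With this replacement your proof would genuinely differ from the paper's. The paper obtains uniform convergence $v_q\to r/\sqrt M$ from the concentration Lemma \ref{lem_radial} and the Neumann-to-Steklov convergence of Proposition \ref{propGKN}. It then passes to the limit pointwise in $\int_{\mathbb D}F_q(M_q(z))\rho^2\,dv_E$. Your $L^2(0,M)$ formulation needs only the weaker convergence. The repaired argument needs neither the blow-up of $G$ nor Proposition \ref{propGKN}. As written, however, (a)--(c) do not establish it.
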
 
If $V$ is viewed as a vector field on $\overline{\mathbb D}$, then $V$ is continuous and points inward at every point of the boundary. Then, it must have a zero in $\mathbb D$: this is an easy consequence of Brouwer fixed point theorem (we have included a proof in Appendix \ref{fixpoint}). This proves Claim \ref{lemmaV} and, with it, Theorem \ref{barycenter}.  

\medskip

\subsection{\bf Second step: change of variables}

Let $\Delta_{A_0}$ be the Aharonov-Bohm Laplacian with potential $A_0=-2\pi\star d\psi_0$, where $\psi_0$ is the Green function of the unit disk with pole at the origin. 
In order to prove Theorem \ref{final} we interpret $v_q$ as an eigenfunction of $\Delta_{A_0}$ with a density that depends on $q$, and we study the continuity in $q$ of the eigenfunction.

\begin{lem}\label{radial_pb} The function $v=v_q(r)$ is the first radial eigenfunction of the weighted problem:
\begin{equation}\label{weightedeigenvalue}
\begin{cases}
\Delta_{A_0}v=\mu \tilde\rho^2_qv\,, & {\rm in\ } \mathbb D\,,\\
d^{A_0}v(N)=0\,, & {\rm on\ } \bd\mathbb D
\end{cases}
\end{equation}
where the weight $\tilde\rho^2_q$ is radial, and equals
$$
\tilde\rho^2_q(r)=\dfrac{1}{2\pi}\int_0^{2\pi}\rho^2_q(r,\theta)\,d\theta
$$
for all $r\in (0,1)$.
\end{lem}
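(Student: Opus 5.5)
We need to show that $v_q=f_p\circ s_q$ is the first radial eigenfunction of \eqref{weightedeigenvalue}. The plan is to transport the one-dimensional variational characterization of Lemma \ref{two} to the disk through $\Phi_q$. By conformal invariance of the Dirichlet energy in dimension two, together with Lemma \ref{Phiqtwo}, the pullback identifies $H^1_{A_p}(\Omega)$ with $H^1_{A_0}(\mathbb D)$ and gives
$$
\int_\Omega |d^{A_p}u|^2\,dv_S=\int_{\mathbb D}|d^{A_0}(\Phi_q^\star u)|^2\,dv_E,\qquad \int_\Omega |u|^2\,dv_S=\int_{\mathbb D}|\Phi_q^\star u|^2\rho_q^2\,dv_E .
$$
By Lemma \ref{Phiqtwo} i), $\Phi_q^\star$ maps $\mathcal R_p(\Omega)$ onto the space of radial functions $w(r)$ on $\mathbb D$ (with the appropriate weighted $H^1$ regularity). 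Hence $\kappa_1(\Omega,A_p)$ equals the minimum, over nonzero radial $w$, of the quotient $\int_{\mathbb D}|d^{A_0}w|^2dv_E\big/\int_{\mathbb D}w^2\rho_q^2dv_E$, and $v_q$ is a minimizer by Lemma \ref{vq}.

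Since $w$ is radial, Fubini in polar coordinates gives $\int_{\mathbb D}w^2\rho_q^2\,dv_E=\int_0^1 w(r)^2\big(\int_0^{2\pi}\rho_q^2\,d\theta\big)r\,dr=\int_{\mathbb D}w^2\tilde\rho_q^2\,dv_E$. So on radial functions the weight $\rho_q^2$ can be replaced by its angular average $\tilde\rho_q^2$. Because $A_0=d\theta$, we have $|d^{A_0}w|^2=w'^2+w^2/r^2$, and the radial Rayleigh quotient becomes
$$
\frac{\int_0^1\big(w'^2+w^2/r^2\big)r\,dr}{\int_0^1 w^2\tilde\rho_q^2\,r\,dr}.
$$
Its Euler--Lagrange equation is $-\frac1r(rw')'+\frac{w}{r^2}=\mu\tilde\rho_q^2w$ on $(0,1)$, with natural boundary condition $w'(1)=0$. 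This is exactly \eqref{weightedeigenvalue} restricted to radial functions, since $\Delta_{A_0}w=-w''-w'/r+w/r^2$ for radial $w$ and $d^{A_0}w(N)=-w'(1)$. The minimizer $v_q$ therefore satisfies this equation with $\mu=\kappa_1(\Omega,A_p)$, in the weak sense and then classically by ODE regularity, since $\tilde\rho_q^2$ is smooth and positive on $[0,1)$. It is the first radial eigenfunction because it minimizes the radial quotient, and it does not change sign since $f_p>0$.

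The main obstacle is regularity and admissibility at the origin and the boundary. First, one must check that the function classes correspond: $f\in\mathcal F_p$ if and only if $w=f\circ s_q$ has finite $\int(w'^2+w^2/r^2)r\,dr$. This follows from the computation in Lemma \ref{two}, because both sides equal the same energy via $a=s_q(r)$. Second, at $r=0$ the condition $\lim G_pf'=0$ has to be matched with the singular endpoint condition $\lim_{r\to 0} rw'(r)=0$; this follows from $G_p(s_q(r))\,s_q'(r)^{-1}\cdot(\text{factor})\sim r$ near $0$. Third, if $\Omega$ is only smooth, $\rho_q$ is merely continuous up to $\bd\mathbb D$, so the Neumann condition should be read in the weak (natural) sense. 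With this reading the argument is consistent, since all that is needed later is the variational characterization.
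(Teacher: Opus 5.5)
Your argument is correct, but it follows a different route from the paper's.

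\textbf{The paper's route.} It works at the level of the equation (Appendix \ref{app_weighted}). It takes a solution $f$ of \eqref{SL1} and substitutes $a=s_q(r)$. It computes $s_q'(r)=r\,m(r)$ and $G_p(s_q(r))=2\pi r^2m(r)$, where $m(r)=\int_0^{2\pi}\rho_q^2\,d\theta$. By direct differentiation it checks that $v=f\circ s_q$ solves $-v''-v'/r+v/r^2=\kappa\,\frac{m(r)}{2\pi}\,v$. The condition $f'(M)=0$ becomes $v'(1)=0$, and $G_pf'\to0$ becomes $rv'\to0$. In fact $G_p(s_q(r))\,f'(s_q(r))=2\pi r\,v'(r)$ exactly; this is the precise version of your somewhat vague ``factor'' remark at the origin.

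\textbf{Your route.} You work at the level of the Rayleigh quotient. Conformal invariance of the magnetic Dirichlet energy, together with $\Phi_q^\star A_p=d\theta$, turns \eqref{K1-u} into a radial quotient on $\mathbb D$ with weight $\rho_q^2$. Fubini then replaces $\rho_q^2$ by its angular mean. The weighted equation and the natural Neumann condition come out as Euler--Lagrange conditions.

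\textbf{What each buys.} Your version explains structurally why only $\tilde\rho_q^2$ appears: the weight enters only through the $L^2$ term, and on radial functions that term sees only the angular average. It never needs the formula for $G_p$ in disk coordinates. Since $\Phi_q^\star$ preserves both quadratic forms on the radial classes, it also immediately identifies the whole radial spectrum $\kappa_k(\Omega,A_p)$ with the part of the spectrum of \eqref{Neumann_density} carried by the modes $v(r)e^{\pm i\theta}$. That identification is what is used in the Remark after Proposition \ref{propGKN} and in the proof of Theorem \ref{final}. The paper's computation is more elementary. It verifies the equation and both boundary conditions, including the singular one at $r=0$, for every solution of \eqref{SL1}.

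A minor point: since $\bd\Omega$ is smooth, the Kellogg--Warschawski theorem makes the conformal map smooth up to $\bd\mathbb D$ with nonvanishing derivative. So $\rho_q$ is smooth and positive on $\overline{\mathbb D}$, not merely continuous, and your weak reading of the boundary condition is harmless but unnecessary.
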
 
The proof  of Lemma \ref{radial_pb} consists in a change of variables and it is rather standard. We pass from the variable $a\in (0,M)$ (recall that $f_p$ is an eigenfunction of the Sturm-Liouville problem \eqref{SL1} in $(0,M)$) to the variable $r\in(0,1)$, where $r$ is the radius of a disk of area $a$ for the metric $\rho_q^2g_E$. For the reader's convenience, we have included the details of the change of variables in Appendix \ref{app_weighted}.

\smallskip
By Gauge invariance (Lemma \ref{one}) we deduce from Lemma \ref{radial_pb} that $w_q\doteq v_q(r)e^{-i\theta}$ is a Neumann eigenfunction for the Laplacian with weight:
\begin{equation}\label{Neumann_density}
\begin{cases}
\Delta w=\mu\tilde\rho_q^2 w\,, & {\rm in\ }\mathbb D\,,\\
dw(N)=0\,, & {\rm on\ }\partial\mathbb D.
\end{cases}
\end{equation}
In particular, $w_q=v_q(r)e^{-i\theta}$ is the first eigenfunction with angular part $e^{-i\theta}$ (recall that $v_q(r)>0$).

\subsection{Third step: the weight concentrates at the boundary}

The weight $\tilde\rho^2_q(r)$ is in fact obtained by averaging $\rho_q^2(r,\theta)$ over the circle of radius $r$.

\smallskip

The main fact for us is that when $q\to\bd\mathbb D$ the weight tends to concentrate at the boundary, in the following precise sense. Recall that $M$ is the volume of $\Omega^{\star}$, that is
$
M=\int_{\mathbb D}\rho^2_q\,dv_E=\int_{\mathbb D}\tilde\rho^2_q\,dv_E
$
for all $q\in\mathbb D$. In Appendix \ref{app_neumanntosteklov} we will prove the following Lemma.

\begin{lem}\label{lem_radial} For any $p>1$ and any $u\in W^{1,p}(\mathbb D)$ we have
\begin{equation}\label{condition_convergence}
\left|\int_{ \mathbb D}\tilde\rho_q^2u\,dv_E-\frac{M}{2\pi}\int_{\partial \mathbb D}u\,ds\right|\leq\omega_p(|q|)\|u\|_{W^{1,p}( \mathbb D)},
\end{equation}
where $\omega_p(|q|)\to 0$ as $|q|\to 1$. 
\end{lem}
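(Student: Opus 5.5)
Since $\tilde\rho_q^2$ is the circular average of $\rho_q^2$ and $u$ is integrated over $\mathbb D$, I would first drop the radial averaging. For fixed $u$ one has
$$
\int_{\mathbb D}\tilde\rho_q^2u\,dv_E=\int_{\mathbb D}\rho_q^2\,\tilde u\,dv_E ,
$$
where $\tilde u(r)=\frac1{2\pi}\int_0^{2\pi}u(r,\theta)\,d\theta$ is the radial average of $u$. Moreover $\|\tilde u\|_{W^{1,p}}\le\|u\|_{W^{1,p}}$ by Jensen, and $\int_{\partial\mathbb D}\tilde u\,ds=\int_{\partial\mathbb D}u\,ds$. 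So it suffices to prove \eqref{condition_convergence} with $\rho_q^2$ in place of $\tilde\rho_q^2$, for radial $u$ (or for general $u$, which is equally fine).

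Next I would use \eqref{conf_q_2} to move the weight back to the fixed density $\rho^2$:
$$
\int_{\mathbb D}\rho_q^2\,u\,dv_E=\int_{\mathbb D}\rho^2\,(u\circ M_q)\,dv_E .
$$
As $|q|\to1$, the map $M_q(z)=\frac{z-q}{1-\bar q z}$ sends every compact subset of $\mathbb D$ into a thin neighbourhood of $\partial\mathbb D$, near the point $-q/|q|$. For a radial $u=\tilde u(r)$ we have $u\circ M_q(z)=\tilde u(|M_q(z)|)$, and
$$
1-|M_q(z)|^2=\frac{(1-|q|^2)(1-|z|^2)}{|1-\bar q z|^2}\longrightarrow0
$$
uniformly on compacts. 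I then split $\mathbb D=\{|z|\le 1-\delta\}\cup\{1-\delta<|z|<1\}$.

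On the outer annulus, $\rho^2$ is integrable with total mass $M$, so its mass there tends to $0$ as $\delta\to0$. I need a uniform bound on $u$ there. For $p>2$, Morrey gives $\|u\|_\infty\le C\|u\|_{W^{1,p}}$. For $1<p\le 2$, I would use the radial one-dimensional estimate $|\tilde u(r)|\le C_\eta\|u\|_{W^{1,p}}$ for $r\ge\eta$, which holds since $\tilde u\in W^{1,p}((\eta,1),r\,dr)$ embeds into $C$. The image region $M_q(\{|z|>1-\delta\})$ may contain points near the origin, though. The cleanest fix is to also assume that $\rho^2$ is bounded, which is true when $\partial\Omega$ is smooth since $\Phi$ then extends smoothly. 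With that, I would use Hölder on $\rho^2\mathbf 1_{\{|z|>1-\delta\}}\in L^{p'}$ together with the change of variables $\|u\circ M_q\|_{L^p(\rho^2)}$. This annulus piece is the delicate one.

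On the inner disk, $|M_q(z)|\ge 1-\epsilon(q)$ with $\epsilon(q)\to0$. Therefore
$$
|\tilde u(|M_q z|)-\tilde u(1)|\le\int_{1-\epsilon}^1|\tilde u'|\,dr\le C\epsilon^{1/p'}\|u\|_{W^{1,p}}
$$
by Hölder, using the trace value $\tilde u(1)=\frac1{2\pi}\int_{\partial\mathbb D}u\,ds$. Combining the two pieces,
$$
\int\rho^2(u\circ M_q)=\tilde u(1)\int_{|z|\le1-\delta}\rho^2+O\big(\epsilon(q,\delta)^{1/p'}+\mathrm{mass}_\delta+\ldots\big)\|u\|_{W^{1,p}}.
$$
Replacing $\int_{|z|\le1-\delta}\rho^2$ by $M$ costs another mass term times the trace bound $|\tilde u(1)|\le C\|u\|_{W^{1,p}}$. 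Choosing $\delta=\delta(q)\to0$ slowly then gives $\omega_p(|q|)\to0$.

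The main obstacle is the uniform control of $u\circ M_q$ on the region where the Möbius map is not near the boundary, i.e. near $z\approx q$, in the low-integrability case $p\le2$. There I would rely on the radial reduction together with $\rho$ bounded, so that the small set $\{|M_q z|<1-\eta\}$ has small $\rho^2$-mass. Its Euclidean area is $O(1-|q|)^2$, and $\tilde u$ is in $L^{p}(r\,dr)$ there, so Hölder closes the estimate.
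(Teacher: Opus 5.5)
Your proof is correct, but it takes a different route from the paper's.

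\textbf{Comparison.} The paper never modifies $u$. It works directly with the radial weight. Using $\rho\le\|\rho\|_\infty$ and the explicit circular integral $\frac{1}{2\pi}\int_0^{2\pi}|1+\bar q re^{i\theta}|^{-4}\,d\theta=\frac{1+|q|^2r^2}{(1-|q|^2r^2)^3}$, it obtains a pointwise bound showing that $\tilde\rho_q^2$ is uniformly small on $B(0,R(|q|))$, with $R=1-1/|\log(1-|q|)|$. On the thin annulus $R<r<1$ it compares $u(r,\theta)$ with $u(1,\theta)$ by the fundamental theorem of calculus along rays plus H\"older. The defect of mass there equals the (small) mass of $B(0,R)$.

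You instead transfer the averaging onto $u$ and pull back by $M_q$ to the fixed density $\rho^2$. This is the same substitution via \eqref{conf_q_2} that the paper uses later in the proof of Theorem \ref{final}. You then split in the source variable:
\begin{enumerate}[(a)]
\item the compact part $|z|\le 1-\delta$ is mapped into the strip $1-|w|\le 2(1-|q|)/\delta^2$, where $\tilde u$ is close to its trace;
\item the collar $|z|>1-\delta$ has small $\rho^2$-mass, and the one-dimensional bound $\sup_{[1/2,1]}|\tilde u|\le C\|u\|_{W^{1,p}(\mathbb D)}$ controls it wherever $|M_qz|\ge 1/2$;
\item the small hyperbolic disk $M_q^{-1}(B(0,1/2))$ around $q$ is harmless, because $|(M_q^{-1})'(w)|^2\le 16(1-|q|^2)^2$ on $B(0,1/2)$ and $\tilde u\in L^1$.
\end{enumerate}

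What each buys: your route avoids the Poisson-type computation and exposes the mechanism, namely that the measure concentrates at a point and rotational invariance spreads it over the circle. The paper's route is a one-shot estimate with explicit $\omega_p$ and needs no reduction of $u$. Both use boundedness of $\rho$, which follows from smoothness of $\bd\Omega$.

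\textbf{One correction.} The parenthetical ``or for general $u$, which is equally fine'' is false, and the radial reduction is essential. The measure $\rho_q^2\,dv_E$ is the push-forward of $\rho^2\,dv_E$ under $M_q$, and $M_q(z)\to -e^{i\gamma}$ for every $z$ as $q\to e^{i\gamma}$. By dominated convergence, for continuous $u$ one gets $\int_{\mathbb D}\rho_q^2u\,dv_E\to M\,u(-e^{i\gamma})$ rather than $\frac{M}{2\pi}\int_{\bd\mathbb D}u\,ds$; the coordinate function $u=x$ is already a counterexample. For the same reason, $\|u\circ M_q\|_{L^p(\rho^2)}$ is uniformly bounded in $q$ only for radial $u$ when $p\le 2$. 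Your subsequent steps in fact use only $\tilde u$, so the argument stands.

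\textbf{A detail to state.} State explicitly that $\delta=\delta(q)$ must satisfy $(1-|q|)/\delta(q)^2\to 0$, since your $\epsilon(q,\delta)$ is of order $(1-|q|)/\delta^2$. For example, $\delta=(1-|q|)^{1/3}$ works and even yields a power rate for $\omega_p$.
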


 Lemma \ref{lem_radial} is stating that, if a sequence of points $q_n\in\mathbb D$ converges to the boundary point $e^{i\gamma}$ and if we set:
$$
\mu_n\doteq \tilde\rho_{q_n}^2 dv_E, \quad \mu\doteq \frac{M}{2\pi}ds
$$
where $ds$ is the arc-length element, then, as $n\to\infty$
\begin{equation}\label{mun}
\mu_n\to\mu
\end{equation}
in $W^{1,p}(\mathbb D)^*$ for all $p>1$. 

\medskip

 We are then studying the behavior of the Neumann eigenvalues and eigenfunctions of the Laplacian on a disk with a radial density that concentrates at the boundary keeping the mass fixed. This phenomenon of mass concentration to the boundary has been studied in \cite{lambertiprovenzano1} (see also \cite{dallaprovenzano,lambertiprovenzano2}). In \cite{lambertiprovenzano1} it has been proved that the Neumann problem with density of fixed mass concentrating uniformly at the boundary of $\mathbb D$ is well-behaved at the limit and converges to the Steklov problem on $\mathbb D$. In the case at hand we will see that the normalized eigenfunction $w_q=v_q(r)e^{-i\theta}$  tends to a second normalized eigenfunction of the Steklov problem
\begin{equation}\label{Steklov_limit}
\begin{cases}
\Delta u=0\,, & {\rm in\ }\mathbb D\,,\\
du(N)=\dfrac{M}{2\pi}\sigma u\,, & {\rm on\ }\partial\mathbb D
\end{cases}
\end{equation}
as $q\to\partial\mathbb D$. 

\smallskip
A more comprehensive and general analysis of how eigenvalues and eigenfunctions depend on measures is presented in \cite{GKL}. There, it is shown that the concentration \eqref{mun} guarantees convergence of spectra and convergence of eigenfunctions in $H^1(\mathbb D)$. 
We state the following proposition, which is a special case of \cite[Propositions 4.8 and 4.11]{GKL}, adapted to our simpler situation.
\begin{prop}\label{propGKN}
Let $\{q_n\}_{n=1}^{\infty}\subset\mathbb D$ be a sequence of points in $\mathbb D$ such that $q_n\to e^{i\gamma}\in\partial\mathbb D$. Suppose that
$$
\left|\int_{\mathbb D}\tilde\rho_{q_n}^2uv\,dv_E-\frac{M}{2\pi}\int_{\partial\mathbb D}uv\,ds\right|\leq\omega(|q_n|)\|u\|_{H^1(\mathbb D)}\|v\|_{H^1(\mathbb D)}
$$
for all $u,v\in H^1(\mathbb D)$, where $\omega(|q|)\to 0$ as $|q|\to 1$ is some modulus of continuity not depending on $u,v$. Let $\{\mu_k^{(q_n)}\}_{k=1}^{\infty}$ denote the eigenvalues of \eqref{Neumann_density} with $q=q_n$, and let $\{u_k^{(q_n)}\}_{k=1}^{\infty}$ be an orthonormal basis of $L^2(\mathbb D,\tilde\rho_{q_n}^2dv_E)$ of corresponding eigenfunctions. Let $\{\sigma_k\}_{k=1}^{\infty}$ denote the eigenvalues of \eqref{Steklov_limit} and let $\{u_k\}_{k=1}^{\infty}$ be an orthonormal basis of $L^2(\partial\mathbb D,\frac{M}{2\pi}ds)$ of corresponding eigenfunctions. Then
$$
\lim_{q_n\to e^{i\gamma}}\mu_k^{(q_n)}=\sigma_k
$$
and, up to extracting a subsequence,
$$
\lim_{q_n\to e^{i\gamma}}\|u_k^{(q_n)}-u_k\|_{H^1(\mathbb D)}=0.
$$
The convergence is along the whole sequence if $\sigma_k$ is simple.
\end{prop}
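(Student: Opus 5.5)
The approach is to recast both problems as eigenvalue problems for compact self-adjoint operators on the single Hilbert space $H^1(\mathbb D)$, show these operators converge in norm, and then read off convergence of eigenvalues from min-max and of eigenfunctions from the spectral projections. Write
$$
a(u,v)=\int_{\mathbb D}\langle du,dv\rangle\,dv_E,\qquad m_n(u,v)=\int_{\mathbb D}\tilde\rho^2_{q_n}uv\,dv_E,\qquad m(u,v)=\frac{M}{2\pi}\int_{\partial\mathbb D}uv\,ds .
$$
Both \eqref{Neumann_density} (with $q=q_n$) and \eqref{Steklov_limit} are generalized eigenvalue problems $a(u,\varphi)=\mu\, m_\bullet(u,\varphi)$ for all $\varphi\in H^1(\mathbb D)$, with $m_\bullet\in\{m_n,m\}$. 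The whole argument is therefore a perturbation statement for the mass form, and the hypothesis
$$
|m_n(u,v)-m(u,v)|\le\omega(|q_n|)\|u\|_{H^1}\|v\|_{H^1}
$$
is exactly a norm-smallness condition on $m_n-m$ as bilinear forms on $H^1(\mathbb D)$.

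\textbf{Step 1: uniform norms and operators.} By the trace-Poincaré inequality, $\langle u,v\rangle:=a(u,v)+m(u,v)$ is an inner product on $H^1(\mathbb D)$ equivalent to the standard one, say $\|u\|_{H^1}^2\le C\langle u,u\rangle$. The hypothesis then gives $|m_n(u,u)-m(u,u)|\le C\omega_n\langle u,u\rangle$, where $\omega_n:=\omega(|q_n|)$. Consequently $\langle u,v\rangle_n:=a+m_n$ is, for $n$ large, an inner product that is uniformly equivalent to $\langle\cdot,\cdot\rangle$, with constants $1\pm C\omega_n$.

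Define $T_n,T$ on $H^1(\mathbb D)$ by the Riesz representation
$$
\langle T_nf,\varphi\rangle_n=m_n(f,\varphi),\qquad \langle Tf,\varphi\rangle=m(f,\varphi).
$$
These operators have the following properties.
\begin{itemize}
\item $T_n$ is self-adjoint and nonnegative for $\langle\cdot,\cdot\rangle_n$, and $T$ is self-adjoint and nonnegative for $\langle\cdot,\cdot\rangle$.
\item Both are compact: $T$ because the trace $H^1\to L^2(\partial\mathbb D)$ is compact, and $T_n$ because of the Rellich embedding.
\item Their eigenvalues are $1/(1+\mu^{(q_n)}_k)$ and $1/(1+\sigma_k)$ respectively, with the same eigenfunctions as the original problems.
\item Since $\langle\cdot,\cdot\rangle_n-\langle\cdot,\cdot\rangle=m_n-m$ and $m_n-m$ is $O(\omega_n)$-small, a short computation shows $\|T_n-T\|_{H^1\to H^1}\le C'\omega_n\to0$.
\end{itemize}

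\textbf{Step 2: eigenvalues.} Rather than invoking abstract perturbation theory with varying inner products, I would compare Rayleigh quotients directly. From $|m_n(u,u)-m(u,u)|\le C\omega_n(a(u,u)+m(u,u))$ one gets, for every $u$ with $m(u,u)>0$,
$$
\frac{a(u,u)}{m_n(u,u)}\le \frac{a(u,u)}{m(u,u)}\,\frac{1}{1-C\omega_n\bigl(1+a(u,u)/m(u,u)\bigr)},
$$
and the symmetric inequality with the roles of $m_n$ and $m$ exchanged. Insert these into the min-max over $k$-dimensional subspaces of $H^1(\mathbb D)$; on the optimal subspaces the quotients $a/m$ and $a/m_n$ are bounded by $\sigma_k$ and $\mu^{(q_n)}_k$, respectively. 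This yields $\mu_k^{(q_n)}\le\sigma_k(1+O(\omega_n))$ and, conversely, $\sigma_k\le\mu_k^{(q_n)}(1+O(\omega_n))$ for fixed $k$, hence $\mu^{(q_n)}_k\to\sigma_k$. The case $k=1$ is trivial, since constants give $0$ in both problems.

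\textbf{Step 3: eigenfunctions.} Let $\Gamma$ be a small circle around $1/(1+\sigma_k)$ isolating its eigenvalue cluster. Norm convergence $T_n\to T$ implies that the Riesz projections $P_n=\frac{1}{2\pi i}\oint_\Gamma(z-T_n)^{-1}dz$ converge in operator norm to $P$; the non-self-adjointness in the fixed inner product is harmless because resolvents still converge in norm. Since $u_k^{(q_n)}\in\operatorname{Ran}P_n$ is bounded in $H^1$ (by the $m_n$-normalization and Step 1), we have
$$
u_k^{(q_n)}=P_nu_k^{(q_n)}=Pu_k^{(q_n)}+o(1),
$$
and $Pu_k^{(q_n)}$ lies in a fixed finite-dimensional space. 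Hence a subsequence converges in $H^1$ to an element $u$ of the $\sigma_k$-eigenspace. Its normalization passes to the limit, $m(u,u)=\lim m_n(u_k^{(q_n)},u_k^{(q_n)})=1$, again by the hypothesis, and the orthogonality relations pass to the limit in the same way. This allows the limits to be taken as the orthonormal basis $\{u_k\}$.

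When $\sigma_k$ is simple, the limit is determined up to sign, so fixing signs consistently (for instance, positivity of the radial part in our application) gives convergence of the whole sequence. I expect the main technical point to be this bookkeeping with the $n$-dependent inner products, namely ensuring that self-adjointness, the min-max comparison and the normalizations are all controlled by the single quantity $\omega_n$; the hypothesis is designed exactly to make this work.
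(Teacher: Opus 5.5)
Your proposal is correct, but it cannot follow the paper's route, because the paper gives no proof of its own here. It quotes the statement as a special case of \cite[Propositions 4.8 and 4.11]{GKL}, a general continuity theory for variational eigenvalues attached to Radon measures converging in the dual of a Sobolev space. The only thing the paper checks is the bilinear hypothesis, in the proof of Theorem \ref{final}, via Lemma \ref{lem_radial} and the embedding $\|uv\|_{W^{1,p}}\le C\|u\|_{H^1}\|v\|_{H^1}$ for $1<p<2$.

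You instead give a self-contained argument tailored to the disk:
\begin{itemize}
\item uniform equivalence of the inner products $a+m_n$ and $a+m$;
\item the bound $\|T_n-T\|_{\mathcal L(H^1)}=O(\omega(|q_n|))$, where your ``short computation'' is the identity $\langle (T_n-T)f,\varphi\rangle=(m_n-m)(f,\varphi)-(m_n-m)(T_nf,\varphi)$ together with $\sup_n\|T_n\|<\infty$;
\item a two-sided Rayleigh-quotient comparison inside the min-max;
\item norm convergence of the Riesz projections.
\end{itemize}
Two small points also check out. Compactness of $T_n$ uses $\tilde\rho^2_{q_n}\in L^\infty$ for each fixed $n$. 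In the Steklov min-max the test space must be one on which $m$ is positive definite; your bound $m\ge m_n\bigl(1-C\omega_n(1+\mu_k^{(q_n)})\bigr)$ on the optimal Neumann subspace guarantees this for large $n$, once the first inequality has shown $\mu_k^{(q_n)}$ is bounded.

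Your route gives explicit rates, $|\mu_k^{(q_n)}-\sigma_k|=O(\omega(|q_n|))$ and $\|P_n-P\|=O(\omega(|q_n|))$, and makes clear exactly which estimate is needed. It also exposes two points the statement glosses over. First, for a multiple $\sigma_k$ the orthonormal basis $\{u_k\}$ must be produced as the limit, with orthonormality passing to the limit through the hypothesis, rather than fixed in advance. Second, whole-sequence convergence for a simple $\sigma_k$ requires a sign normalization. The citation buys generality (arbitrary admissible measures on arbitrary manifolds), at the cost of leaving the reader to match the hypotheses of \cite{GKL} to this setting.
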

\begin{rem}Note that $\mu_k^{(q)}=\kappa_k(\Omega,A_p)$ with $\Phi(q)=p$, hence, when the pole of the Green function approaches the boundary, the radial spectrum converges to the spectrum of the Steklov problem \eqref{Steklov_limit}.
\end{rem}

\subsection{Fourth step: proof of Theorem \ref{final}}

\begin{proof}[Proof of Theorem \ref{final}]
\begin{enumerate}[i)]
 \item The map $V$ is continuous from $\mathbb D$ to $\mathbb C$. In fact, as soon as $q\in\mathbb D$, $v_q$ and $\rho_q^2$ vary smoothly with $q$. In particular, $q\mapsto v_q$ is continuous in $C^0([0,1]])$.
 \item It is not restrictive to consider sequences $\{q_n\}_{n=1}^{\infty}$ such that $q_n\to e^{i\gamma}\in\partial\mathbb D$. We are in the hypothesis of Propositions \ref{propGKN}  (see also \cite[Proposition 4.8 and 4.11]{GKL} and \cite[\S 5.1 and \S 5.2]{GKL}): in fact, we have, for any $u,v\in H^1(\mathbb D)$ and $p>1$ by Lemma \ref{lem_radial} that 
 $$
\left|\int_{ \mathbb D}\tilde\rho_{q_n}^2uv\,dv_E-\frac{M}{2\pi}\int_{\partial \mathbb D}uv\,ds\right|\leq\omega_p(|q_n|)\|uv\|_{W^{1,p}( \mathbb D)},
 $$
then, choosing $1<p<2$, we have by Sobolev Embedding
$$
\|uv\|_{W^{1,p}(\mathbb D)}\leq C\|u\|_{H^1(\mathbb D)}\|v\|_{H^1(\mathbb D)}.
$$
Proposition \ref{propGKN} gives convergence in $H^1(\mathbb D)$ of the eigenfunctions up to extracting subsequences, unless the limiting eigenvalue is simple, which is essentially the case at hand,  because we are looking at a specific sequence: $\{v_{q_n}(r)e^{i\theta}\}_{n=1}^{\infty}$, where the angular part is fixed along the whole sequence.

The eigenfunctions and eigenvalues of the limiting Steklov problem \eqref{Steklov_limit} are well-known: $\sigma_1=0$, $\sigma_2=\sigma_3=\frac{2\pi}{M}$, etc. An $L^2(\partial\mathbb D,\frac{M}{2\pi}ds)$-orthonormal basis of the eigenspace corresponding to $\sigma_2=\sigma_3$  is given by $\left\{\frac{r}{\sqrt{M}}e^{i\theta},\frac{r}{\sqrt{M}}e^{-i\theta}\right\}$. The eigenspace corresponding to the zero eigenvalue is one-dimensional and spanned by constant functions. All other eigenvalues are double. Now, for $|q_n|$ close to $1$, $\mu_2^{(q_n)}=\mu_3^{(q_n)}$ and this eigenvalue is double, converging to $\sigma_2=\sigma_3$. An associated orthonormal basis of eigenfunctions is then of the following form: $\{\tilde v_{q_n}(r)e^{i\theta},\tilde v_{q_n}(r)e^{-i\theta}\}$, for some $\tilde v_{q_n}(r)>0$ and this forces $\tilde v_{q_n}(r)=v_{q_n}(r)$: in fact, by a change of variables, defining $\tilde f_{p_n}:(0,M)\to\mathbb R$ by $\tilde v_{q_n}=\tilde f_{p_n}\circ s_{q_n}$, we have that $\tilde f_{p_n}$ must satisfy \eqref{SL1}, hence $\tilde f_{p_n}=f_{p_n}$ due to the normalization and the choice of the sign. Recall that $p_n=\Phi(q_n)$. Then, up to extracting a subsequence, $v_{q_n}(r)e^{i\theta}$ converges in $H^1(\mathbb D)$ to $\frac{r}{\sqrt{M}}e^{i\theta}$ as $q_n\to e^{i\gamma}$. The fact that we have fixed the angular part guarantees that the convergence is along the whole sequence. Finally, we have convergence in $C^0(\overline{\mathbb D})$, because for all $r\in (0,1)$: 
\begin{equation}\label{czerohone}
\left|v_{q_n}(r)e^{-i\theta}-\frac{r}{\sqrt{M}}e^{-i\theta}\right|^2\leq 
\frac{1}{2\pi}\|v_{q_n}(r)e^{-i\theta}-\frac{r}{\sqrt{M}}e^{-i\theta}\|_{H^1(\mathbb D)}^2.
\end{equation}
To verify that, set for simplicity of notation $\phi_n(r)\doteq v_{q_n}(r)-\frac{r}{\sqrt{M}}$.\\
Since $v_{q_n}(0)=0$ for all $q_n$, we have $\phi_n(0)=0$ for all $n$. Then, for all $r\in (0,1)$:

\begin{equation}\label{czeroconvergence}
\begin{aligned}
\abs{\phi_n(r)e^{-i\theta}}^2&=2\int_0^r\phi'_n(y)\phi_n(y)dy\\
&\leq \int_0^r\Big(\phi'_n(y)^2y+\phi_n(y)^2\frac 1y\Big)dy\\
&\leq \int_0^1\Big(\phi'_n(r)^2r+\phi_n(r)^2\frac 1r\Big)dr\\
&=\dfrac1{2\pi}\int_{\mathbb D}\abs{\nabla(\phi_n(r)e^{-i\theta})}^2dv_E\\
&\leq \frac{1}{2\pi}\|\phi_n(r)e^{-i\theta}\|_{H^1(\mathbb D)}^2
\end{aligned}
\end{equation}
which proves \eqref{czerohone}.
Therefore $v_{q_n}(r)$ converges uniformly to $r/\sqrt{M}$ as $n\to\infty$ and in particular
 $$
\lim_{q_n\to e^{i\gamma}}v_{q_n}(1)=\frac{1}{\sqrt{M}}.
 $$
Let
$$
F_{q_n}=v_{q_n}e^{i\theta},
$$
so that
$$
V(q_n)=\int_{\mathbb D}F_{q_n}(w)\rho^2_{q_n}(w)\,dv_E(w).
$$
From Lemma \ref{Phiq}, $ii)$ we know
$$
\rho^2_{q_n}\,dv_E=(M_{q_n}^{-1})^\star(\rho^2\, dv_E)
$$
and then,  by changing variables $w=M_{q_n}(z)$, 
$$
V({q_n})=\int_{\mathbb D}F_{q_n}(M_{q_n}(z))\rho^2(z)dv_E(z).
$$
Now, for all $z\in\mathbb D$
$$
\lim_{q_n\to e^{i\gamma}}M_{q_n}(z)=-e^{i\gamma}
$$
hence, as ${q_n}\to e^{i\gamma}$ we see that $F_{q_n}(M_{q_n}(z))\to -\frac{1}{\sqrt{M}}e^{i\gamma}$ for all $z$ and $V({q_n})\to -\sqrt{M}e^{i\gamma}$. Thus $V$ extends continuously to $\bd{\mathbb D}$, and for all $q\in \bd{\mathbb D}$ one has
$$
V(q)=-\sqrt{M}q.
$$
\end{enumerate}
\end{proof}

\section{Proof of Theorem \ref{thm_counter}}\label{sec:counter}
On the sphere $\mathbb S^2$ with polar coordinates
$(r,\theta)\in (0,\pi)\times [0,2\pi)$ we consider the domain
$$
\Omega_{\eps}\doteq \{(r,\theta): \eps<r<\pi-\eps\}
$$
for $\eps\in(0,\pi/2)$. Note that $\Omega_{\eps}$ is the sphere with two antipodal disks of radius $\eps$ removed. 
We denote by $D_{\eps}$ the geodesic disk of radius $\pi-\eps$:
$$
D_{\eps}\doteq \{(r,\theta): 0<r<\pi-\eps\}
$$
which is the sphere with a disk of radius $\eps$ removed. Theorem \ref{thm_counter} is a consequence of the following:

\begin{thm}\label{mainasy} As $\eps \to 0^{+}$ one has:
$$
\twoarray
{\mu_{2}(D_{\eps})=2-\frac 32\eps^{2}-\frac 32\eps^{4}|\log\eps|+o(\eps^{4}|\log\eps|)}
{\mu_{2}(\Omega_{\eps})=2-3\eps^{2}-3\eps^{4}|\log\eps|+o(\eps^{4}|\log\eps|)}
$$
\end{thm}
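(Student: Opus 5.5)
The plan is to separate variables exactly as in Section \ref{sub_4}. Both $D_\eps$ and $\Omega_\eps$ are rotationally invariant, so every Neumann eigenfunction can be taken of the form $v(r)e^{ik\theta}$. Writing $\mu=\nu(\nu+1)$ and $x=\cos r$, the radial part solves the associated Legendre equation of order $k$. Hence $v$ is a combination of $P_\nu^k(\cos r)$ and $Q_\nu^k(\cos r)$, or equivalently of $P_\nu^k(\pm\cos r)$. For $\eps=0$ the first positive eigenvalue of $\mathbb S^2$ is $\mu=2$ (so $\nu=1$), with eigenfunctions $\cos r$ ($k=0$) and $\sin r\,e^{\pm i\theta}$ ($k=\pm1$). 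For small $\eps$ the second Neumann eigenvalue must therefore come from a branch $\nu=1+\delta(\eps)$ with $k\in\{0,1\}$. I will expand $\delta$ in each case.

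\emph{Step 1: the disk $D_\eps$.} Here $\mu_2(D_\eps)=\mu_{11}(\pi-\eps)$ by \eqref{second_N}. The solution regular at $r=0$ is $P_\nu^1(\cos r)$, and the eigenvalue condition is $\frac{d}{dr}P^1_\nu(\cos r)=0$ at $r=\pi-\eps$, i.e.\ near $x=-1$. I will use the standard connection formula expressing $P_\nu^1(x)$ near $x=-1$ through $P_\nu^1(-x)$ and $Q_\nu^1(-x)$; the coefficient of the singular part carries the factor $\sin(\nu\pi)/\pi\approx -\delta$. Near $x=-1$, write $1+x\approx \eps^2/2$. The regular part behaves like $\sqrt{1+x}\,\bigl(1+O(1+x)\bigr)$. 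The singular part behaves like $(1+x)^{-1/2}$ plus a term $\sqrt{1+x}\,\log(1+x)$ whose coefficient depends on $\nu$ through digamma values. Imposing the Neumann condition gives an equation of the form $\delta\,(c_1\eps^{-2}+c_2\log\eps+\dots)=c_3+O(\eps^2)$. Solving it iteratively will yield $\delta=-\tfrac12\eps^2-\tfrac12\eps^4|\log\eps|+o(\eps^4|\log\eps|)$, and since $\mu=2+3\delta+O(\delta^2)$ this gives the first formula. As a sanity check, the leading term should agree with the classical small Neumann hole formula: the hole lowers the eigenvalue by roughly $\pi\eps^2\bigl(2|\nabla u|^2-\mu u^2\bigr)(x_0)/\|u\|^2$. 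With $u=\sin r\,e^{i\theta}$ at the pole $r=\pi$ we have $u=0$ and $|\nabla u|=1$, and this gives $-\tfrac32\eps^2$.

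\emph{Step 2: the annulus $\Omega_\eps$.} First I identify the mode: I must show that the $k=1$ branch lies below the $k=0$ branch, and below any $k\geq2$ branch, which stays near $6$. For $k=0$ the perturbed function is $\cos r$, which has $\nabla u=0$ and $u^2=1$ at both poles. The hole formula then gives an \emph{increase} of order $\eps^2$, whereas $k=1$ gives a decrease of order $\eps^2$. A rigorous version can be obtained from the same Legendre expansion with $k=0$, and it settles $\mu_2(\Omega_\eps)=\mu_{11}$ for the annulus. Next, the annulus is symmetric under $r\mapsto\pi-r$. The $k=1$ eigenfunction close to $\sin r$ is even about the equator, so I will take $v(r)=P^1_\nu(\cos r)+P^1_\nu(-\cos r)$ (or the correct symmetric combination with $Q^1_\nu$). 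This reduces the problem to a single Neumann condition at $r=\eps$. Each pole now contributes the same singular term, and the equation becomes $\delta\,(2c_1\eps^{-2}+\dots)=c_3+\dots$. This doubles the shift and gives $\delta=-\eps^2-\eps^4|\log\eps|+o(\eps^4|\log\eps|)$, hence the second formula.

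\emph{Main obstacle.} The delicate point is the second-order $\eps^4|\log\eps|$ term. It requires keeping the $\sqrt{1+x}\log(1+x)$ piece of the $Q$-type (second-kind) solution together with the $O(\delta)$ corrections to the regular part of $P^1_\nu$ near $x=-1$. This means differentiating the hypergeometric representation of $P^1_\nu$ in $\nu$ at $\nu=1$. It also requires checking that the $\delta^2$ contributions and the $O(\eps^4)$ terms without a logarithm do not interfere at this order. I would first do the computation for the disk carefully, then reuse it for the annulus through the symmetry reduction. Finally, subtracting the two expansions after matching the areas gives Theorem \ref{thm_counter}. Since $|\Omega_\eps|=|D_{\eps'}|$ with $1-\cos\eps'=2(1-\cos\eps)$, we have $\eps'^2=2\eps^2-\tfrac14\eps^4+O(\eps^6)$, so $\eps'^4=4\eps^4+O(\eps^6)$ and $|\log\eps'|=|\log\eps|+O(1)$. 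Substituting into the first formula gives $\mu_2(\Omega^\star_\eps)=2-3\eps^2-6\eps^4|\log\eps|+o(\eps^4|\log\eps|)$. Subtracting this from $\mu_2(\Omega_\eps)$ yields $3\eps^4|\log\eps|+o(\eps^4|\log\eps|)$.
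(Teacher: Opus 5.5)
Your proposal is correct and follows essentially the paper's route: separation of variables, the connection formula relating Legendre functions at $x$ and $-x$, the $(1-x)\log(1-x)$ term in the expansion of the second-kind function at $x=1$, and the even combination $p_\nu(r)+p_\nu(\pi-r)$ for $\Omega_\eps$. Here $p_\nu(r)=\sin r\,P_\nu'(\cos r)$, which is your $P^1_\nu(\cos r)$ up to sign, and the paper packages the singular part as $R_\nu=xQ_\nu'-\nu(\nu+1)Q_\nu$; the differences are peripheral, as follows. The paper gets $\mu_2(\Omega_\eps)=\sigma_{11}(\eps)$ for every $\eps$ by noting that $v_{02}'$ is a $k=1$ eigenfunction with Dirichlet conditions, so $\sigma_{02}$ is at least the first $k=1$ Dirichlet eigenvalue, which exceeds $\sigma_{11}$; this is simpler than your asymptotic $k=0$ comparison. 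Your ``main obstacle'' is also lighter than you expect: the $O(\delta)$ corrections to the regular part enter the Neumann equation only at order $\eps^2$, so they shift $\delta$ only at order $\eps^4$ with no logarithm, and no $\nu$-derivative of the hypergeometric series is needed. Finally, a harmless slip in the corollary: $\eps'^2=2\eps^2+\frac16\eps^4+O(\eps^6)$, not $2\eps^2-\frac14\eps^4$.
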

Theorem \ref{thm_counter} becomes a corollary of Theorem \ref{mainasy}.
\begin{proof}[Proof of Theorem \ref{thm_counter}] Let $s(\eps)$ be such that $\Omega^{\star}_{\eps}=D_{s(\eps)}$. 
Then the condition $\abs{\Omega_{\eps}}=\abs{D_{s(\eps)}}$ gives $\cos(s(\eps))=2\cos\eps-1$
hence
$$
s(\eps)=\arccos(2\cos\eps-1)=2\arccos(\sqrt{\cos\eps}).
$$
Expanding near $\eps=0$ we have
$
s(\eps)=\sqrt 2\eps+O(\eps^{3}).
$
From Theorem \ref{mainasy} we see that
$$
\mu_{2}(\Omega^{\star}_{\eps})=\mu_{2}(D_{s(\eps)})=2-3\eps^{2}-6\eps^{4}|\log\eps|+o(\eps^{4}|\log\eps|)
$$
as $\eps\to 0^+$, hence
$$
\mu_{2}(\Omega_{\eps})-\mu_{2}(\Omega^{\star}_{\eps})=3\eps^{4}|\log\eps|+o(\eps^{4}|\log\eps|)
$$
as $\eps\to 0^+$ and the assertion follows. 
\end{proof}
The rest of this section is devoted to the proof of Theorem \ref{mainasy}.

\subsection{Well-known facts on the first positive Neumann eigenvalue of $\Omega_\eps$ and $D_\eps$} Here we identify the first positive Neumann eigenvalue of $\Omega_\eps$ and $D_\eps$ as the first eigenvalue of suitable Sturm-Liouville problems.

As usual, we find a basis of $L^2(D_\eps)$ of eigenfunctions in the form $u(r,\theta)=w(r)e^{ik\theta}$, where $k\in\mathbb Z$ (see e.g., \cite[\S II.5]{chavel}).  Expressing the  Laplacian $\Delta$ in polar coordinates, we see that $u$ as above is an eigenfunction of the Laplacian on $D_\eps$ satisfying the Neumann boundary condition  if and only if the radial part $w(r)$ is an eigenfunction of the following Sturm-Liouville problem:
\begin{equation}\label{ODE_disk_k}
\begin{cases}
-w''(r)-\cot(r)w'(r)+\frac{k^2}{\sin^2(r)}w(r)=\eta w(r)\,, & {\rm in\ }(0,\pi-\eps)\,,\\
\lim_{r\to 0^+}\sin(r)w'(r)=w'(\pi-\eps)=0.
\end{cases}
\end{equation}
From standard Sturm-Liouville theory, when $k=0$ the boundary condition is equivalent to requiring $w'(0)=0$, while for $k\ne 0$ it is equivalent to $w(0)=0$.

 For each $k\in\mathbb Z$,  problem \eqref{ODE_disk_k} has a countable set of eigenvalues, denoted 
$$
\eta_{k1}(\eps)\leq \eta_{k2}(\eps)\leq\cdots\leq\eta_{kj}(\eps)\leq\cdots\nearrow+\infty
$$
with associated eigenfunctions $w_{kj,\eps}(r)$, where $j=1,2,\dots$, $k\in\mathbb Z$. Note that $\eta_{kj}(\eps)=\eta_{-kj}(\eps)$ for all $k\in\mathbb Z$, hence we may confine the analysis of the eigenvalues to $k=0,1,...$. 
The smallest eigenvalue is $\mu_1(D_\eps)=\eta_{01}(\eps)=0$ for all $\eps\in(0,\pi)$,  with eigenspace given by the constants. The second eigenvalue (lowest positive eigenvalue) is denoted $\mu_2(D_\eps)$: it could be either $\eta_{02}(\eps)$ (radial) or  $\eta_{11}(\eps)$.

Analogously, there exists a basis of $L^2(\Omega_\eps)$ of eigenfunctions in the form $u(r,\theta)=v(r)e^{ik\theta}$, where $k\in\mathbb Z$. Then $u$ is an eigenfunction of the Laplacian on $\Omega_\eps$ satisfying the Neumann boundary conditions on the two boundary circles if and only if the radial part $v(r)$ is an eigenfunction of the following Sturm-Liouville problem:
\begin{equation}\label{ODE_annulus_k}
\begin{cases}
-v''(r)-\cot(r)v'(r)+\frac{k^2}{\sin^2(r)}v(r)=\sigma v(r)\,, & {\rm in\ }(\eps,\pi-\eps)\,,\\
v'(\eps)=v'(\pi-\eps)=0.
\end{cases}
\end{equation}
This is a regular Sturm-Liouville problem. For each $k\in\mathbb Z$,  problem \eqref{ODE_annulus_k} has a countable set of eigenvalues, denoted 
$$
\sigma_{k1}(\eps)\leq \sigma_{k2}(\eps)\leq\cdots\leq\sigma_{kj}(\eps)\leq\cdots\nearrow+\infty
$$
with associated eigenfunctions $v_{kj,\eps}(r)$, where $j=1,2,\dots$, $k\in\mathbb Z$. Also here we may confine the analysis of the eigenvalues to $k=0,1,...$. The second eigenvalue (lowest positive eigenvalue) is denoted $\mu_2(\Omega_\eps)$: as before, it could be either $\sigma_{02}(\eps)$ (radial) or  $\sigma_{11}(\eps)$ (see Appendix \ref{app:lemma_radial_eig}).

We are now in position to identify the first positive Neumann eigenvalue of $D_\eps$ and $\Omega_\eps$.

\begin{lem}\label{lem_first_annulus}We have:
\begin{enumerate}[i)]
\item For all $\eps\in(0,\pi)$
$$
\mu_2(D_\eps)=\eta_{11}(\eps)
$$
and a corresponding eigenspace is spanned by $\{w_{11,\eps}(r)e^{i\theta},w_{11,\eps}(r)e^{-i\theta}\}$.
\item For all $\eps\in(0,\pi/2)$
$$
\mu_2(\Omega_\eps)=\sigma_{11}(\eps)
$$
and a corresponding eigenspace is spanned by $\{v_{11,\eps}(r)e^{i\theta},v_{11,\eps}(r)e^{-i\theta}\}$.
\end{enumerate}
\end{lem}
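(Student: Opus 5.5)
Part i) is already settled earlier in the paper: $D_\eps$ is the geodesic disk of radius $R=\pi-\eps$, so \eqref{second_N} from \cite[Proposition 6.1]{lang_laug_robin} gives $\eta_{11}(\eps)=\mu_{11}(\pi-\eps)<\mu_{02}(\pi-\eps)=\eta_{02}(\eps)$. For $k\ge 2$ the potential term $k^2/\sin^2 r$ is pointwise larger than for $k=1$, and the boundary conditions are the same ($w(0)=0$ for $k\ne0$). Hence $\eta_{k1}(\eps)>\eta_{11}(\eps)$ by comparison of Rayleigh quotients, and $\eta_{kj}\ge\eta_{k1}$. So $\mu_2(D_\eps)=\eta_{11}(\eps)$, and the eigenspace is spanned by $w_{11,\eps}e^{\pm i\theta}$, since $\eta_{11}$ is simple in $SL_1$.

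For part ii) I would argue in the same way. The candidates for $\mu_2(\Omega_\eps)$ are $\sigma_{02}(\eps)$ and $\sigma_{11}(\eps)$; all other $\sigma_{kj}$ exceed one of these by the same monotonicity in $k^2$ (here the boundary conditions are Neumann for every $k$, so the Rayleigh quotient comparison is direct). What remains is to show $\sigma_{11}(\eps)<\sigma_{02}(\eps)$ for $\eps\in(0,\pi/2)$.

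\begin{itemize}
\item \emph{Symmetry.} The annulus is invariant under $r\mapsto\pi-r$. Each eigenfunction of \eqref{ODE_annulus_k} is therefore even or odd about $\pi/2$.
\item \emph{Nodal structure.} The radial eigenfunction $v_{02}$ is orthogonal to the constants and has exactly one zero. By simplicity it must be odd, with its zero at $r=\pi/2$.
\item \emph{Reduction to a half-interval.} $\sigma_{02}$ is then the first eigenvalue of $k=0$ on $(\eps,\pi/2)$ with Neumann at $\eps$ and Dirichlet at $\pi/2$. Likewise, $\sigma_{11}$ is the first eigenvalue of $k=1$ on $(\eps,\pi/2)$ with Neumann conditions at both ends, because the first eigenfunction is positive and even.
\end{itemize}

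To compare the two half-interval problems I would use the Neumann-to-$k=1$ trick. Let $v=v_{02}$ on $(\eps,\pi/2)$; it satisfies $v'(\eps)=0$ and $v(\pi/2)=0$, and after a sign change $v>0$ there. Set $\phi=-v'$. Differentiating the $k=0$ equation $-(\sin r\,v')'=\sigma\sin r\,v$ shows that $\phi$ satisfies
\[
-\phi''-\cot r\,\phi'+\frac{\phi}{\sin^2 r}=\sigma_{02}\,\phi .
\]
This is exactly the $k=1$ equation. Next, $\phi(\eps)=0$, and at $\pi/2$ the equation gives $v''(\pi/2)=-\cot(\pi/2)v'(\pi/2)=0$, so $\phi'(\pi/2)=0$. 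Moreover $\phi>0$ on $(\eps,\pi/2)$, since $(\sin r\,v')'=-\sigma\sin r\,v<0$ and $v'(\eps)=0$ force $v'<0$.

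So $\sigma_{02}$ is the first eigenvalue of the $k=1$ operator on $(\eps,\pi/2)$ with Dirichlet at $\eps$ and Neumann at $\pi/2$. Relaxing the Dirichlet condition at $\eps$ to Neumann strictly lowers the first eigenvalue: the minimizer of the Neumann problem is positive up to $\eps$, so it is not admissible for the Dirichlet problem. This yields $\sigma_{11}(\eps)<\sigma_{02}(\eps)$.

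The main obstacle I expect is justifying the Dirichlet/Neumann comparison at $\eps$ rigorously. The alternative is to compare directly with Rayleigh quotients through the identity $\int\sin r\,(\phi'^2+\phi^2/\sin^2 r)$, with care over boundary terms.
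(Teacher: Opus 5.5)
Your proof is correct. Part i) matches the paper, which also relies on \cite{lang_laug_robin}.

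For part ii) your key mechanism is the same as the paper's: differentiate the $k=0$ eigenfunction $v_{02}$ to get a solution of the $k=1$ equation that vanishes where $v_{02}$ has Neumann data, then use that a Dirichlet condition raises the first eigenvalue. The difference is that you first reduce to the half-interval $(\eps,\pi/2)$, using the reflection $r\mapsto\pi-r$ and a nodal count. The paper stays on the whole interval. There $g_{02}=v_{02}'$ already vanishes at both $\eps$ and $\pi-\eps$, so $\sigma_{02}$ is \emph{some} eigenvalue of the $k=1$ Dirichlet problem on $(\eps,\pi-\eps)$, and hence $\sigma_{02}\ge\lambda_{11}>\sigma_{11}$.

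The paper's version therefore does not need:
\begin{enumerate}[i)]
\item the parity of $v_{02}$;
\item the sign of $\phi$;
\item the identification of $\sigma_{11}$ as a half-interval Neumann eigenvalue.
\end{enumerate}
It also applies verbatim to non-symmetric bands $\{\eps_1<r<\pi-\eps_2\}$. Your route gives the sharper fact that $\sigma_{02}$ is exactly the first mixed Dirichlet--Neumann eigenvalue of the $k=1$ problem on $(\eps,\pi/2)$, but this is not needed for the lemma.

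The step you flag as the main obstacle is one line in either setting. Functions vanishing at $\eps$ form a subclass of the Neumann admissible class, so the first eigenvalues satisfy $\le$. If equality held, the Dirichlet eigenfunction $\phi$ would minimize the Neumann Rayleigh quotient, and so would also satisfy the natural condition $\phi'(\eps)=0$. Together with $\phi(\eps)=0$, ODE uniqueness then forces $\phi\equiv0$, a contradiction. The paper asserts $\lambda_{11}>\sigma_{11}$ without comment for the same reason.
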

Point $i)$ is proved e.g., in \cite{lang_laug_robin} and point $ii)$ can be proved using the same argument. For the reader's convenience, we have included a proof of $ii$) in Appendix \ref{app:lemma_radial_eig}.

\smallskip

We will also need the following preliminary estimate:

\begin{lem}\label{lem_est_eigen} There exist $C>0$ and $\eps_0>0$ such that, for all $\eps\in(0,\eps_0)$
\begin{enumerate}[i)]
\item  $|\mu_2(D_\eps)-2|\leq C\eps^2$ and $\mu_2(D_\eps)< 2$.
\item $|\mu_2(\Omega_\eps)-2|\leq C\eps^2$ and $\mu_2(\Omega_\eps)<2$.
\end{enumerate}
The constant $C$ is independent of $\eps$.
\end{lem}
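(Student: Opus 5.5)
By Lemma \ref{lem_first_annulus} it suffices to estimate $\eta_{11}(\eps)$ and $\sigma_{11}(\eps)$, the first eigenvalues of the Sturm--Liouville problems \eqref{ODE_disk_k} and \eqref{ODE_annulus_k} with $k=1$. The plan is to get the upper bound $<2$ and the lower bound $2-C\eps^2$ separately. Both come from comparison with the sphere: for $k=1$ on the full interval $(0,\pi)$, the function $\sin r$ solves $-w''-\cot r\,w'+w/\sin^2 r=2w$. Hence the first eigenvalue of the $k=1$ problem on the whole sphere is $2$, attained by $w=\sin r$, corresponding to the restriction of the coordinate function $\sin r\,e^{i\theta}$.

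For the strict upper bound I use $\sin r$ as a test function in the Rayleigh quotient
\[
\frac{\int (w'^2+w^2/\sin^2 r)\sin r\,dr}{\int w^2\sin r\,dr}
\]
over $I=(0,\pi-\eps)$ or $I=(\eps,\pi-\eps)$. The admissible class imposes no condition at the Neumann endpoints, and $w(0)=0$ holds for the disk. Integrating by parts with the equation $-(\sin r\,w')'+w/\sin r=2\sin r\,w$ gives
\[
\int_I (w'^2+w^2/\sin^2 r)\sin r\,dr = 2\int_I w^2\sin r\,dr + \big[\sin r\,w\,w'\big]_{\partial I}.
\]
With $w=\sin r$ the boundary term is $\sin^2 r\cos r$. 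At $r=\pi-\eps$ this equals $-\sin^2\eps\cos\eps<0$. At $r=\eps$ the term is subtracted, so it contributes $-\sin^2\eps\cos\eps<0$ as well. The quotient is therefore $2-c\eps^2/\int_I\sin^3 r\,dr$, which is strictly less than $2$ and also gives $2-\mu_2\ge c'\eps^2$ as a consistency check.

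For the lower bound $\mu_2\ge 2-C\eps^2$, I take the positive first eigenfunction $w$ of \eqref{ODE_annulus_k} (resp.\ \eqref{ODE_disk_k}) with eigenvalue $\sigma$. I then compute a Wronskian-type identity against $\sin r$. Multiply the equation for $w$ by $\sin r\cdot\sin r$ and the equation for $\sin r$ by $w\sin r$, subtract, and integrate over $I$. This gives
\[
(2-\sigma)\int_I w\sin^2 r\,dr=\Big[\sin r\,(w\cos r-\sin r\,w')\Big]_{\partial I}=\Big[w\sin r\cos r\Big]_{\partial I},
\]
using $w'=0$ at the Neumann endpoints. The right-hand side is bounded by roughly $\eps\,(w(\eps)+w(\pi-\eps))$. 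It therefore suffices to show that the endpoint values of $w$ are bounded by $C\eps\cdot\frac{1}{\eps^2}\int_I w\sin^2 r\,dr$, i.e.\ comparable to $\eps$ times the average of $w$ under the normalisation $\int_I w\sin^2 r=1$. This is the main obstacle: I need to control the eigenfunction near the small holes, where the term $w/\sin^2 r$ is large.

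To handle it I use the a priori bound $\sigma\le 2$ from the first step. Near $r=\eps$, the equation $(\sin r\,w')'=\sin r\,(1/\sin^2 r-\sigma)w\ge 0$ holds for $r<r_0$ fixed. Hence $\sin r\,w'$ is increasing from $0$ at $r=\eps$, so $w'\ge 0$ there, and $w$ is increasing on $(\eps,r_0)$. Moreover $(\sin r\,w')'\ge w/(2\sin r)$, and integrating from $\eps$ gives $\sin r\,w'(r)\ge \tfrac12 w(\eps)\log(r/\eps)$. Integrating once more shows $w$ grows at least like $w(\eps)\cdot\tfrac12\log^2(r/\eps)$ up to $r_0$, which only yields a logarithmic gain. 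A better route is to compare with the explicit solution $\sin r+c\,(\text{second solution}\sim 1/r)$ of the $k=1$ equation at $\lambda=2$ chosen with zero derivative at $\eps$. That solution is $\approx r+\eps^2/r$, so at $r=\eps$ its value is $\approx 2\eps$ while its bulk size is $O(1)$. A Sturm comparison argument, which is valid because $\sigma=2+O(\text{small})$, then shows $w(\eps)\le C\eps\|w\|$, and similarly at $\pi-\eps$; for the disk $D_\eps$ only the endpoint $\pi-\eps$ appears. This yields $2-\sigma\le C\eps\cdot\eps=C\eps^2$, and together with the first step gives both parts of Lemma \ref{lem_est_eigen}.
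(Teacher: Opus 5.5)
\textbf{Verdict: correct, by a different route for the rate.} Your strict upper bound is exactly the paper's argument: the paper also inserts $\sin r\,e^{i\theta}$ into the Rayleigh quotient, and your integration by parts makes it explicit (each Neumann endpoint contributes $-\sin^2\eps\cos\eps$). For the rate $\abs{\mu_2-2}\le C\eps^2$, the paper gives no argument of its own; it cites \cite[Proposition 3]{bucur_gafa}, a general result for small holes of arbitrary shape. Your route is instead a self-contained ODE proof built on an exact Wronskian identity against $\sin r$. With the correct sign, that identity reads
\[
(2-\sigma)\int_I w\sin^2 r\,dr=\Big[\sin r\,(\sin r\,w'-w\cos r)\Big]_{\partial I}=\sin\eps\cos\eps\,\big(w(\eps)+w(\pi-\eps)\big),
\]
and only the $\pi-\eps$ term appears for $D_\eps$. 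The bracket you wrote has the opposite sign, which would force $\sigma>2$. This is harmless because you only use absolute values.

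What your approach buys: combined with the local profile $w\approx r+\eps^2/r$ near a hole and $w\approx\sin r$ in the bulk, the identity already predicts the leading coefficients $\tfrac32\eps^2$ and $3\eps^2$ of Theorem \ref{mainasy}. The price is that it depends on the rotational symmetry, whereas the cited result does not.

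\textbf{Two points to tighten in the comparison step.} The step you only sketch does go through, but two things need fixing.
\begin{enumerate}
\item The comparison needs only the one-sided bound $\sigma<2$ from your first step, not $\sigma=2+O(\text{small})$; assuming two-sided closeness would be circular, since that is what you are proving. Let $z=\sin r+B_\eps q(r)$, with $q(r)=-\frac{\cos r}{2\sin r}+\frac12\sin r\log\tan\frac r2$ and $B_\eps=-\cos\eps/q'(\eps)\approx-2\eps^2$, so that $z'(\eps)=0$ and $z>0$ on $[\eps,\pi/8]$. Then $W=\sin r\,(zw'-wz')$ vanishes at $\eps$ and satisfies $W'=(2-\sigma)\sin r\,zw\ge 0$. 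Hence $w/z$ is nondecreasing, and $w(\eps)\le w(r_0)\,z(\eps)/z(r_0)\le C\eps\,w(r_0)$, since $z(\eps)\approx 2\eps$ and $z(r_0)\approx\sin r_0$.
\item Make $\|w\|$ concrete. On $(\eps,\pi/4]$ one has $1/\sin^2 r\ge 2>\sigma$, so your observation that $\sin r\,w'$ increases from $0$ shows $w$ is increasing there. With $r_0=\pi/8$ this gives $w(r_0)\int_{\pi/8}^{\pi/4}\sin^2 r\,dr\le\int_I w\sin^2 r\,dr$.
\end{enumerate}
The endpoint $\pi-\eps$ is handled by the symmetry $r\mapsto\pi-r$ of the equation. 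Inserting both bounds into the identity gives $2-\sigma\le C\eps^2$.
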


Lemma \ref{lem_est_eigen} tells us that at the limit as $\eps\to 0^+$, $\mu_2(\Omega_\eps)$ and $\mu_2(D_\eps)$ converge to $\mu_2(\mathbb S^2)=2$ from below. It also gives the rate of convergence: $\eps^2$.  The rate of convergence in Lemma \ref{lem_est_eigen} is again well-known and holds when small holes of more general shapes are removed from a spherical domain. We refer e.g., to \cite[Proposition 3]{bucur_gafa}  for a proof. The fact that the eigenvalues converge from below follows from a direct computation: it is enough to use $\sin(r)e^{i\theta}$ as test function in the Rayleigh quotients of $\mu_2(D_\eps)$ and $\mu_2(\Omega_\eps)$.

\smallskip

\subsection{Generalities on Legendre functions}  Consider  the ODE on $(0,\pi)$:
\begin{equation}\label{ODE}
-v''(r)-\cot(r)\,v'(r)+\dfrac{1}{\sin^{2}r}v(r)=\lambda v(r).
\end{equation}
We have seen that $\eta_{11}(\eps)$ is the first eigenvalue of \eqref{ODE} on the subinterval $(0,\pi-\eps)$ while $\sigma_{11}(\eps)$ is the first eigenvalue of \eqref{ODE} on the subinterval $(\eps,\pi-\eps)$, both with Neumann conditions.

\smallskip

Set $x=\cos r$ and $v(r)=u(x)=u(\cos r)$. Then $u:(-1,1)\to\mathbb R$ satisfies
\begin{equation}\label{ODEtwo}
(1-x^{2})u''(x)-2xu'(x)+\Big(\lambda-\dfrac{1}{1-x^{2}}\Big)u(x)=0.
\end{equation}
Let $\nu>0$ be so that
$$
\lambda=\nu(\nu+1).
$$
Consider  the ODE on $(-1,1)$:
\begin{equation}\label{legendre}
(1-x^{2})u''(x)-2xu'(x)+\nu(\nu+1)u(x)=0
\end{equation}
By Frobenius theory, since the coefficients of \eqref{legendre} are analytic on $(-1,1)$, so is any  solution of it. There is a unique solution, denoted $P_{\nu}(x)$, which is analytic at $x=1$ and satisfies $P_{\nu}(1)=1$. More precisely we have, as $x\to 1^{-}$ (see \cite[14.3.4 and 15.2.1]{nist}): 
\begin{equation}\label{asyP}
P_{\nu}(x)=1-\dfrac{\nu(\nu+1)}{2}(1-x)+O(1-x)^{2}.
\end{equation}
By Frobenius method, one can construct a second independent solution, denoted $Q_{\nu}(x)$, which is such that, when $\nu\notin \mathbb Z$ (see \cite[14.9.10]{nist}):
\begin{equation}\label{explicitQ}
Q_{\nu}(x)=\dfrac{\pi}{2\sin(\pi\nu)}\left(\cos(\pi\nu)P_{\nu}(x)-P_{\nu}(-x)\right).
\end{equation}
We will need the following refined asymptotics as $x\to 1^{-}$ of $Q_\nu$, which we prove in Appendix \ref{app_Q}:
\begin{lem}\label{appthree}The function $Q_{\nu}(x)$ has a logarithmic singularity at $x=1^-$, and, as $x\to 1^{-}$:
\begin{equation}\label{asyQ}
Q_{\nu}(x)=-\frac 12\log(1-x)+c(\nu)+\dfrac{\nu(\nu+1)}{4}(1-x)\log(1-x)+o((1-x)|\log(1-x)|),
\end{equation}
where $c(\nu)=\frac{1}{2}\log(2)-\gamma-\psi(\nu+1)$. Here $\gamma$ is Euler's constant and $\psi(z)\doteq\frac{\Gamma'(z)}{\Gamma(z)}$.
\end{lem}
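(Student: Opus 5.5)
Starting from the connection formula \eqref{explicitQ}, $Q_\nu(x)=\frac{\pi}{2\sin(\pi\nu)}\big(\cos(\pi\nu)P_\nu(x)-P_\nu(-x)\big)$, the only term that is singular at $x=1$ is $P_\nu(-x)$. Its behaviour at $x\to1^-$ is the behaviour of $P_\nu(y)$ as $y\to -1^+$. So the plan is to write $P_\nu$ as a hypergeometric function centred at $x=1$ and then use the standard logarithmic connection formula for ${}_2F_1$ at argument $1$, at the degenerate parameter value $c=a+b$.

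\textbf{Step 1: the hypergeometric representation.} We use
$$
P_\nu(y)={}_2F_1\Big(-\nu,\nu+1;1;\tfrac{1-y}{2}\Big).
$$
With $y=-x$ the argument becomes $z=\frac{1+x}{2}$, so $1-z=\frac{1-x}{2}\to0$. The parameters are $a=-\nu$, $b=\nu+1$, $c=1$, which satisfy $c-a-b=0$. This is exactly the logarithmic case (NIST 15.8.10, or the first two terms of Abramowitz–Stegun 15.3.10), where
$$
{}_2F_1(a,b;a+b;z)=\frac{\Gamma(a+b)}{\Gamma(a)\Gamma(b)}\sum_{n\ge0}\frac{(a)_n(b)_n}{(n!)^2}\big[2\psi(n+1)-\psi(a+n)-\psi(b+n)-\log(1-z)\big](1-z)^n .
$$
We keep only the terms $n=0$ and $n=1$. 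The remainder is $O((1-z)^2|\log(1-z)|)$, which is $o((1-x)|\log(1-x)|)$.

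\textbf{Step 2: simplify the constants.} The prefactor is $\Gamma(1)/(\Gamma(-\nu)\Gamma(\nu+1))$. By the reflection formula this equals $-\sin(\pi\nu)/\pi$, and it cancels against $\frac{\pi}{2\sin\pi\nu}$ together with the minus sign in front of $P_\nu(-x)$, leaving an overall factor $\frac12$.

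\emph{Log term.} Since $-\log(1-z)=-\log(1-x)+\log 2$, the $n=0$ term produces $-\frac12\log(1-x)$, which is the leading term of \eqref{asyQ}.

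\emph{Constant term.} The $n=0$ constant is $\frac12\big[\log2+2\psi(1)-\psi(-\nu)-\psi(\nu+1)\big]$. To this we add the regular piece $\frac{\pi\cos\pi\nu}{2\sin\pi\nu}P_\nu(1)=\frac{\pi}{2}\cot(\pi\nu)$. The reflection identity $\psi(-\nu)=\psi(\nu+1)+\pi\cot(\pi\nu)$ (from $\psi(1-z)-\psi(z)=\pi\cot\pi z$ at $z=\nu+1$) makes the two cotangent terms cancel. With $\psi(1)=-\gamma$ the constant becomes
$$
c(\nu)=\tfrac12\log2-\gamma-\psi(\nu+1).
$$
I expect this cancellation and the sign bookkeeping to be the main place where errors can creep in. I would double check it numerically for one value of $\nu$ by comparing the formula with direct evaluation of \eqref{explicitQ}.

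\emph{Next-order term.} For $n=1$ we have $(a)_1(b)_1=-\nu(\nu+1)$ and $(1-z)=\frac{1-x}{2}$. Multiplying by the overall $\frac12$, the logarithmic part is
$$
\tfrac12\cdot(-\nu(\nu+1))\cdot\tfrac{1-x}{2}\cdot(-\log(1-x))=\frac{\nu(\nu+1)}{4}(1-x)\log(1-x).
$$
Everything else at this order is $O(1-x)$: the non-log part of the $n=1$ term, and the correction $-\frac{\nu(\nu+1)}{2}(1-x)$ coming from $P_\nu(x)$ via \eqref{asyP}. All of these are absorbed into $o((1-x)|\log(1-x)|)$.

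The argument assumes $\nu\notin\mathbb Z$, as required in \eqref{explicitQ}. For integer $\nu$ the result follows by continuity in $\nu$, or directly from the integer-order definition of $Q_\nu$. For the application only $\nu$ close to $1$ is needed, and the remainder bounds are locally uniform in $\nu$, which I would state explicitly.
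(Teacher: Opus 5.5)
Your computation is correct for non-integer $\nu$, which is all that Section~\ref{sec:counter} uses ($\nu_\eps=1-\alpha$ with $\alpha>0$). The bookkeeping checks out:
\begin{itemize}
\item the factor $-\sin(\pi\nu)/\pi$ turns $-\frac{\pi}{2\sin(\pi\nu)}P_\nu(-x)$ into $\frac12\sum_n$;
\item $\frac{\pi}{2}\cot(\pi\nu)$ cancels the cotangent part of $-\frac12\psi(-\nu)$;
\item the $n=1$ logarithm gives $\frac{\nu(\nu+1)}{4}(1-x)\log(1-x)$.
\end{itemize}

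Your route differs from the paper's. The paper argues structurally. Since $x=1$ has double indicial root $0$, $Q_\nu=A\big(w(x)+P_\nu(x)\log(1-x)\big)+BP_\nu(x)$ with $w$ analytic and $w(1)=0$. It then imports the first two terms of $Q_\nu$ from NIST 14.8.3 to get $A=-\frac12$, $B=c(\nu)$. Finally it reads off the $(1-x)\log(1-x)$ coefficient from $-\frac12P_\nu(x)\log(1-x)$ and \eqref{asyP}.

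You instead derive everything, including $c(\nu)$, from the normalization \eqref{explicitQ}. You use the degenerate Gauss connection formula and the reflection formulas for $\Gamma$ and $\psi$. This makes your proof self-contained: you check that $c(\nu)$ belongs to the same $Q_\nu$ used in Lemma~\ref{apptwo}, rather than assuming the cited expansion uses that normalization. The paper's version is shorter and covers integer $\nu$ at once.

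The two are closer than they look. Since $\sum_n\frac{(-\nu)_n(\nu+1)_n}{(n!)^2}\big(\frac{1-x}{2}\big)^n=P_\nu(x)$, the logarithmic parts of your series sum exactly to $-\frac12P_\nu(x)\log\frac{1-x}{2}$. That is the paper's Frobenius decomposition.

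One caveat about your final paragraph. Section~\ref{sec:counter} applies the lemma with $\nu=\nu_\eps\to1$, so uniformity near $\nu=1$ matters, and your termwise bounds are not uniform there.
\begin{itemize}
\item The non-logarithmic part of the $n=1$ term contains $\psi(1-\nu)\sim-\frac{1}{1-\nu}$.
\item The regular correction is $\frac{\pi}{2}\cot(\pi\nu)\big(P_\nu(x)-1\big)$, not $-\frac{\nu(\nu+1)}{2}(1-x)$; you dropped the prefactor. It is also of size $\frac{1-x}{1-\nu}$.
\end{itemize}
With $1-x\approx\eps^2/2$ and $1-\nu\sim\eps^2$, both pieces are of order one, not $o\big((1-x)|\log(1-x)|\big)$, unless you show they cancel. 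They do cancel, by $\psi(1-\nu)=\psi(\nu)+\pi\cot(\pi\nu)$, exactly as at $n=0$. For $n\geq2$ the factor $(1-\nu)$ in $(-\nu)_n$ neutralizes $\cot(\pi\nu)$.

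The same point must be settled before \emph{by continuity in $\nu$} can handle integer $\nu$. Your alternative via $Q_n=\frac12P_n\log\frac{1+x}{1-x}-W_{n-1}$, with $W_{n-1}(1)=\sum_{k=1}^n\frac1k$, does work. The cleanest fix is the structural one above. $Q_\nu+\frac12P_\nu\log\frac{1-x}{2}$ is a power series in $1-x$ whose coefficients are fixed by $c(\nu)$ and the Frobenius recursion. These coefficients therefore depend continuously on $\nu$, which gives local uniformity at once.
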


The functions $P_{\nu}(x)$ and $Q_{\nu}(x)$ are known in the literature as the {\it standard Legendre functions of degree $\nu$} of the first and second kind, or {\it Ferres} functions (since we are in the interval $(-1,1)$), and constitute the standard basis of solutions of \eqref{legendre} in $(-1,1)$. We can write any solution of \eqref{ODEtwo} in terms of $P_\nu(x),Q_\nu(x)$, or, equivalently, $P_\nu(x),P_\nu(-x)$:

\begin{lem}
The space of solutions in $(-1,1)$ of \eqref{ODEtwo} is two-dimensional, spanned by:
$$
U_{1}(x)=\sqrt{1-x^{2}}P'_{\nu}(x), \quad U_{2}(x)=\sqrt{1-x^{2}}P'_{\nu}(-x)
$$
\end{lem}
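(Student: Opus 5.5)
The plan is to verify directly that $U_1$ and $U_2$ solve \eqref{ODEtwo}, and then to show that they are linearly independent. The first part is the classical passage from the Legendre equation \eqref{legendre} to the associated Legendre equation of order $1$.

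Let $u$ be any solution of \eqref{legendre} on $(-1,1)$. Differentiating \eqref{legendre} once, I will show that $y=u'$ satisfies
$$
(1-x^2)y''-4xy'+(\nu(\nu+1)-2)y=0 .
$$
Next I set $U=\sqrt{1-x^2}\,y$. I will compute $U'$ and $U''$ in terms of $y,y',y''$ and substitute into the left-hand side of \eqref{ODEtwo}. The terms involving $y''$ and $y'$ should reproduce $\sqrt{1-x^2}\big((1-x^2)y''-4xy'\big)$. The remaining $y$-terms should collapse to
$$
\sqrt{1-x^2}\Big(-2+\nu(\nu+1)\Big)y+\Big(\tfrac{x^2}{(1-x^2)^{3/2}}\cdot(\ldots)\Big),
$$
and the coefficient $-\frac{1}{1-x^2}$ in \eqref{ODEtwo} is exactly what cancels the leftover $x^2/(1-x^2)$ contributions. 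This is a routine computation, which I will carry out carefully but not belabor.

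Applying this with $u=P_\nu(x)$ gives $U_1$. For $U_2$, note that \eqref{legendre} is invariant under $x\mapsto -x$, so $u(x)=P_\nu(-x)$ is also a solution. Its derivative is $-P_\nu'(-x)$, and the minus sign is irrelevant by linearity. Hence $U_2$ is a solution as well.

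For linear independence, it suffices to show that $P_\nu'(x)$ and $P_\nu'(-x)$ are independent. Here I use the behavior at $x\to1^-$. By \eqref{asyP}, $P_\nu'(x)\to \nu(\nu+1)/2$, which is finite. On the other hand, $P_\nu(-x)$ is a combination of $P_\nu(x)$ and $Q_\nu(x)$ with a nonzero $Q_\nu$ coefficient; from \eqref{explicitQ} this coefficient is $-2\sin(\pi\nu)/\pi\neq0$ when $\nu\notin\mathbb Z$. By Lemma \ref{appthree}, $Q_\nu'(x)\sim \frac{1}{2(1-x)}$ is unbounded. Therefore $P_\nu'(-x)$ is unbounded near $1$ while $P_\nu'(x)$ is bounded, so they cannot be proportional, since a nonzero constant multiple of a bounded function is bounded.

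The main obstacle is integer $\nu$. In that case $P_\nu$ is a Legendre polynomial, $P_\nu(-x)=\pm P_\nu(x)$, and independence genuinely fails. I would therefore state the lemma, and use it later, only for $\nu\notin\mathbb Z$. This covers the regime of interest, since $\lambda$ is close to but below $2$, so $\nu$ is close to but below $1$. Alternatively, for integer $\nu$ one could replace $U_2$ with $\sqrt{1-x^2}\,Q_\nu'(x)$.

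Finally, the solution space of the linear second-order ODE \eqref{ODEtwo}, whose coefficients are smooth on $(-1,1)$, is two-dimensional by standard ODE theory. This concludes the argument.
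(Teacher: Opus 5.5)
Your proposal is correct and follows the same route as the paper, which treats the lemma as a straightforward calculation: differentiating the Legendre equation, substituting $U=\sqrt{1-x^2}\,y$, and using the $x\mapsto -x$ symmetry is exactly that calculation, and your boundedness argument as $x\to 1^-$ supplies the linear independence the paper leaves implicit. Your caveat is also right: for $\nu\in\mathbb Z$ the functions $U_1,U_2$ are proportional (e.g.\ $\nu=1$ gives $U_1=U_2=\sqrt{1-x^2}$), so the lemma needs $\nu\notin\mathbb Z$, which holds where it is used since there $\nu\in(0,1)$.
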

The proof is a straightforward calculation. An immediate consequence is that:
\begin{lem} Two linearly independent solutions of \eqref{ODE} are given by
$$
p_{\nu}(r)=\sin (r)P'_{\nu}(\cos r), \quad q_{\nu}(r)=p_{\nu}(\pi-r)
$$
so that 
$$
\lim_{r\to 0^{+}}p_{\nu}(r)=0, \quad \lim_{r\to \pi^{-}}q_{\nu}(r)=0,
$$
and then $p_{\nu}$ is regular at $r=0$ and $q_{\nu}$ is regular at $r=\pi$.
\end{lem}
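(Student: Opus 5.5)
The plan is to derive the statement from the preceding lemma, which says that $U_1(x)=\sqrt{1-x^2}P'_\nu(x)$ and $U_2(x)=\sqrt{1-x^2}P'_\nu(-x)$ span the solutions of \eqref{ODEtwo}, and then undo the substitution $x=\cos r$, $v(r)=u(\cos r)$ from \eqref{ODE} to \eqref{ODEtwo}. Since $\sqrt{1-\cos^2 r}=\sin r$ for $r\in(0,\pi)$, the function $U_1$ pulls back to $v(r)=\sin(r)P'_\nu(\cos r)=p_\nu(r)$. Likewise $U_2$ pulls back to $\sin(r)P'_\nu(-\cos r)$. Because $-\cos r=\cos(\pi-r)$ and $\sin r=\sin(\pi-r)$, this equals $p_\nu(\pi-r)=q_\nu(r)$. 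As $r\mapsto\cos r$ is a diffeomorphism $(0,\pi)\to(-1,1)$, linear independence of $U_1,U_2$ transfers directly to linear independence of $p_\nu,q_\nu$. Alternatively, one can note that \eqref{ODE} is invariant under $r\mapsto\pi-r$ (since $\cot(\pi-r)=-\cot r$ and the sign change is compensated by the derivative), so $q_\nu$ solves \eqref{ODE} whenever $p_\nu$ does.

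For the boundary behaviour, I would use that $P_\nu$ is analytic at $x=1$, with expansion \eqref{asyP}. Hence $P'_\nu(x)\to -\nu(\nu+1)/2$ as $x\to1^-$, so $P'_\nu(\cos r)$ stays bounded as $r\to0^+$. Multiplying by $\sin r\to0$ gives $p_\nu(r)\to0$, and in fact $p_\nu(r)\sim -\frac{\nu(\nu+1)}{2}\,r$. Because $\cos r$ is analytic in $r$, $p_\nu$ is analytic at $r=0$, i.e.\ regular there. By the reflection $q_\nu(r)=p_\nu(\pi-r)$, the same statements hold for $q_\nu$ at $r=\pi$.

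The only point needing care, and the step I expect to be the main obstacle, is the independence claim in the preceding lemma, if one wants to verify it rather than assume it. I would check it through the boundary behaviour: $p_\nu$ vanishes at $r=0$, whereas $q_\nu$ involves $P'_\nu(-x)$ near $x=1$. Writing $P_\nu(-x)$ via \eqref{explicitQ} in terms of $P_\nu$ and $Q_\nu$ for $\nu\notin\mathbb Z$, the logarithmic singularity of $Q_\nu$ in Lemma \ref{appthree} gives $Q_\nu'(x)\sim\frac{1}{2(1-x)}$. Consequently $q_\nu(r)\sim c/r$ with $c\neq0$ as $r\to0^+$, so $q_\nu$ cannot be a multiple of $p_\nu$. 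This covers non-integer $\nu$, which is the regime relevant here since $\lambda$ is near $2$ but not equal to it.
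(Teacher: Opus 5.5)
Your proposal is correct and follows the paper, which obtains the lemma as an immediate consequence of the preceding one via $x=\cos r$ and $-\cos r=\cos(\pi-r)$, with the limits coming from the analyticity of $P_\nu$ at $x=1$ (fix the sign slip there: \eqref{asyP} gives $P'_\nu(1)=\frac{\nu(\nu+1)}{2}>0$, so $p_\nu(r)\sim\frac{\nu(\nu+1)}{2}\,r$, e.g.\ $p_1(r)=\sin r$). Keep your independence check $q_\nu(r)\sim\frac{2\sin(\pi\nu)}{\pi r}$, but state explicitly that $\nu\notin\mathbb Z$ is a genuine hypothesis: for $\nu=n\in\mathbb N$, $P_n(-x)=(-1)^nP_n(x)$ gives $q_n=(-1)^{n+1}p_n$ (e.g.\ $q_1=p_1=\sin r$), so this lemma and the preceding one tacitly require $\nu\notin\mathbb Z$, which holds in the application since there $\nu=1-\alpha\in(0,1)$.
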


Let $D_{\eps}$ be the geodesic ball of radius $\pi-\eps$. Then we write $\eta_{11}(\eps)=\nu_{\eps}(\nu_{\eps}+1)$. Since $p_{\nu_{\eps}}$ is regular at $0$, in order to be a  Neumann eigenfunction of $D_{\eps}$ associated to $\eta_{11}(\eps)$, one imposes the boundary condition
$
p'_{\nu_{\eps}}(\pi-\eps)=0.
$
We summarize:
\begin{lem} A Neumann eigenfunction of $D_{\eps}$ associated to $\eta_{11}(\eps)=\nu_{\eps}(\nu_{\eps}+1)$ is defined on $(0,\pi-\eps)$ and given by  
$$
v(r)\doteq p_{\nu_{\eps}}(r)
$$
where $\nu_{\eps}>0$ is the lowest positive root of 
$$
p'_{\nu_{\eps}}(\pi-\eps)=0.
$$
\end{lem}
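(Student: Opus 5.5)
The statement summarizes three facts. First, the restriction of a Neumann eigenfunction to $(0,\pi-\eps)$ solves \eqref{ODE} and is regular at $r=0$. Second, $p_{\nu}$ is, up to scaling, the only such solution. Third, $\eta_{11}(\eps)$ corresponds to the smallest admissible $\nu$. I would prove them in that order.

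\textbf{Step 1: the eigenfunction is a multiple of $p_\nu$.} By Lemma \ref{lem_first_annulus}, $i)$, the eigenspace of $\mu_2(D_\eps)=\eta_{11}(\eps)$ is spanned by $w_{11,\eps}(r)e^{\pm i\theta}$. Here $w_{11,\eps}$ solves \eqref{ODE} with $\lambda=\eta_{11}(\eps)$, and the condition $\lim_{r\to0^+}\sin r\,w'(r)=0$ is equivalent to $w(0)=0$. Write $\lambda=\nu(\nu+1)$ with $\nu>0$; this is possible since $\eta_{11}(\eps)>0$, because a zero eigenvalue would force $w$ constant, and $w(0)=0$ would then give $w\equiv0$. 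By the preceding lemma, the solution space of \eqref{ODE} is spanned by $p_\nu$ and $q_\nu$, with $p_\nu(0^+)=0$.

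It remains to check that $q_\nu$, or any combination containing it, is not regular at $0$. Since $q_\nu(r)=\sin r\,P_\nu'(-\cos r)$, this amounts to the behaviour of $P'_\nu(-x)$ as $x\to1^-$. From \eqref{explicitQ} and Lemma \ref{appthree}, for $\nu\notin\mathbb Z$ we have $P_\nu(-x)=-\frac{2\sin\pi\nu}{\pi}\left(-\tfrac12\log(1-x)+\dots\right)$. Differentiating gives $P_\nu'(-x)\sim c/(1-x)$ with $c\neq0$, so $\sin r\,P'_\nu(-\cos r)\sim c'/r\to\infty$. For integer $\nu$ the only candidate is $\nu=1$, where $\lambda=2$; Lemma \ref{lem_est_eigen} excludes this for small $\eps$, and in general one can argue directly through the Wronskian. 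Alternatively, Abel's formula gives a Wronskian $W(p_\nu,q_\nu)=C/\sin r$ with $C\neq0$, since the two solutions are independent. If both were bounded at $0$ with the derivative bound $\sin r\,w'\to0$, the Wronskian would be $o(1/\sin r)$, a contradiction. This Wronskian route is cleaner and is the one I would use. Hence $w_{11,\eps}=c\,p_{\nu}$.

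\textbf{Step 2: the boundary condition.} The Neumann condition at $r=\pi-\eps$ reads $p'_{\nu}(\pi-\eps)=0$. So $\nu=\nu_\eps$ is a positive root of $\nu\mapsto p_\nu'(\pi-\eps)$.

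\textbf{Step 3: lowest root.} This is the main point. I need that $\eta_{11}(\eps)$ corresponds to the \emph{lowest} positive root. Conversely, for every positive root $\nu$, $p_\nu$ is an eigenfunction of \eqref{ODE_disk_k} with $k=1$ and eigenvalue $\nu(\nu+1)$. The map $\nu\mapsto\nu(\nu+1)$ is increasing on $(0,\infty)$, so the set of roots corresponds bijectively and monotonically to the spectrum $\{\eta_{1j}(\eps)\}_j$. In particular the smallest eigenvalue $\eta_{11}(\eps)$ comes from the smallest positive root. Simplicity of the eigenvalues of this limit-point/regular Sturm–Liouville problem guarantees there is no ambiguity in the correspondence.

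The only delicate part is the regularity argument at $r=0$ in Step 1. The Wronskian argument, combined with the singular-endpoint boundary condition, handles it without needing separate treatment of integer $\nu$.
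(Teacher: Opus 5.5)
Your proposal is correct and follows the paper's route: $p_\nu$ is the solution regular at $r=0$, the Neumann condition at $\pi-\eps$ reads $p'_\nu(\pi-\eps)=0$, and monotonicity of $\nu\mapsto\nu(\nu+1)$ matches $\eta_{11}(\eps)$ with the lowest positive root, with you supplying the uniqueness and ordering details the paper leaves implicit. One small fix: your Wronskian step relies on $p_\nu,q_\nu$ being independent, which fails for integer $\nu$ since $q_\nu=\pm p_\nu$ (e.g.\ $p_1=q_1=\sin r$, and $\nu=1$ does occur at $\eps=\pi/2$); instead apply Abel's formula to the pair $(p_\nu,w)$, where $\sin r\,W(p_\nu,w)$ is constant and tends to $0$ as $r\to0^+$, which gives $w=c\,p_\nu$ for every $\nu>0$ with no independence assumption.
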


We now turn our attention to the domain $\Omega_{\eps}$. By symmetry, the eigenfunction $v_{11,\eps}$ must be even or odd with respect to $\pi/2$, but since it is the first eigenfunction of \eqref{ODE_annulus_k} with $k=1$ it doesn't change sign, so it is even with respect to $\pi/2$. Therefore:

\begin{lem} A Neumann eigenfunction of $\Omega_{\eps}$ associated to $\sigma_{11}(\eps)$ is given by  
$$
v(r)\doteq p_{\nu_{\eps}}(r)+p_{\nu_{\eps}}(\pi-r)
$$ 
where $\nu_{\eps}>0$ is now the lowest positive root of 
$$
p'_{\nu_\eps}(\eps)-p'_{\nu_\eps}(\pi-\eps)=0.
$$
The corresponding eigenvalue is
$$
\sigma_{11}(\eps)=\nu_{\eps}(\nu_{\eps}+1).
$$
\end{lem}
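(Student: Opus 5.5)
The plan is to exploit the reflection symmetry $r\mapsto\pi-r$ of $\Omega_\eps$ and reduce to a one-dimensional statement about solutions of \eqref{ODE}.

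\emph{Step 1: reduction to the ODE.} By Lemma \ref{lem_first_annulus}, ii), $\sigma_{11}(\eps)$ is the first eigenvalue of \eqref{ODE_annulus_k} with $k=1$, with a first eigenfunction $v_{11,\eps}$. Write $\sigma_{11}(\eps)=\nu_\eps(\nu_\eps+1)$ with $\nu_\eps>0$; this is possible since $\sigma_{11}(\eps)>0$ (the potential $1/\sin^2 r$ is positive).

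\emph{Step 2: symmetry.} The map $r\mapsto \pi-r$ preserves $(\eps,\pi-\eps)$, preserves the ODE \eqref{ODE} (because $\cot(\pi-r)=-\cot r$ and the first-derivative term also changes sign), and preserves the Neumann conditions. Hence $v_{11,\eps}(\pi-r)$ is again an eigenfunction for the same eigenvalue. The first eigenvalue of a regular Sturm--Liouville problem is simple, so $v_{11,\eps}(\pi-r)=c\,v_{11,\eps}(r)$ with $c=\pm1$. The first eigenfunction has no zeros in the open interval, so $c=1$ and $v_{11,\eps}$ is even about $\pi/2$.

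\emph{Step 3: representation.} By the preceding lemma, $p_\nu$ and $q_\nu(r)=p_\nu(\pi-r)$ span the solutions of \eqref{ODE} on $(0,\pi)$ whenever they are independent. Independence holds because $p_\nu\to0$ at $0$, while $q_\nu$ is singular there: $P'_\nu(-x)$ blows up like $1/(1+x)$ times a nonzero constant when $\nu\notin\mathbb Z$, by \eqref{explicitQ} and \eqref{asyQ}. Integer $\nu$ cannot occur here, since then $\lambda$ would be a sphere eigenvalue $\ge 2$, whereas $\mu_2(\Omega_\eps)<2$. For the case $\nu_\eps<1$, which is what we get from $\mu_2(\Omega_\eps)<2$ (Lemma \ref{lem_est_eigen}), the argument is direct.

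So $v=a\,p_\nu+b\,q_\nu$. Evenness forces $a=b$: the odd combination $p_\nu-q_\nu$ is odd, and an odd function cannot be a nonzero multiple of an even one. Therefore $v$ is a multiple of $p_{\nu_\eps}(r)+p_{\nu_\eps}(\pi-r)$.

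\emph{Step 4: boundary condition.} We have $v'(r)=p'_\nu(r)-p'_\nu(\pi-r)$. The condition $v'(\eps)=0$ gives $p'_\nu(\eps)-p'_\nu(\pi-\eps)=0$, and $v'(\pi-\eps)=0$ gives the same equation with the opposite sign.

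Conversely, every root $\nu$ of this equation yields an eigenfunction, provided $p_\nu+q_\nu\not\equiv0$, which holds by independence. Hence the eigenvalues of \eqref{ODE_annulus_k} with $k=1$ having even eigenfunctions are exactly $\nu(\nu+1)$ for the positive roots $\nu$. Since $\nu\mapsto\nu(\nu+1)$ is increasing on $\nu>0$ and $\sigma_{11}$ is the smallest eigenvalue, which has an even eigenfunction by Step 2, $\nu_\eps$ is the lowest positive root.

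\emph{Main obstacle.} The only delicate point is independence of $p_\nu$ and $q_\nu$, together with excluding degenerate values of $\nu$. I would handle it via the Wronskian, using the singular behaviour of $q_\nu$ at $0$ coming from Lemma \ref{appthree}.
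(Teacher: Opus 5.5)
Your proposal is correct and follows the same route as the paper: reflection symmetry plus simplicity and positivity of the first $k=1$ eigenfunction force $v_{11,\eps}$ to be even about $\pi/2$, hence a multiple of $p_{\nu_\eps}(r)+p_{\nu_\eps}(\pi-r)$, and the Neumann condition gives the equation for $\nu_\eps$. You also supply details the paper leaves implicit, namely that $\nu_\eps\in(0,1)$ is non-integer (for integer $\nu$ the functions $p_\nu,q_\nu$ are in fact dependent) and why $\nu_\eps$ is the \emph{lowest} root; there are two harmless slips. First, $P'_\nu(-x)$ blows up like $1/(1-x)$ as $x\to1^-$, not $1/(1+x)$. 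Second, the converse in Step 4 is not literally exact, since every even integer $\nu$ is a spurious root (there $p_\nu+q_\nu\equiv0$), but these roots lie above $1>\nu_\eps$ and do not affect the conclusion.
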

Of course the values of $\nu_{\eps}$ corresponding to $\eta_{11}(\eps)$ and $\sigma_{11}(\eps)$ are different. From now on, for simplicity of notation, we will drop the dependence of $\nu$ on $\eps$.

\smallskip

We will need  the following formula

\begin{lem}\label{apptwo} One has:
\begin{equation}\label{pprime}
p'_{\nu}(\pi-\eps)=\cos(\pi\nu)p'_{\nu}(\eps)+\frac2{\pi}\sin(\pi\nu)R_{\nu}(\cos\eps),
\end{equation}
where 
$
R_{\nu}(x)=xQ'_{\nu}(x)-\nu(\nu+1)Q_{\nu}(x).
$
\end{lem}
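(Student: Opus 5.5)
The identity is obtained by expressing $p'_\nu$ through a first-order differential operator acting on Legendre functions, and then inserting the connection formula \eqref{explicitQ} between $P_\nu(-x)$, $P_\nu(x)$ and $Q_\nu(x)$.

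\textbf{Step 1: a formula for $p'_\nu$.} Differentiate $p_\nu(r)=\sin(r)P'_\nu(\cos r)$ to get
$$
p'_\nu(r)=\cos r\,P'_\nu(\cos r)-\sin^2 r\,P''_\nu(\cos r).
$$
Now eliminate the second derivative using the Legendre equation \eqref{legendre} at $x=\cos r$, namely $(1-x^2)P''_\nu=2xP'_\nu-\nu(\nu+1)P_\nu$. This gives
$$
p'_\nu(r)=-xP'_\nu(x)+\nu(\nu+1)P_\nu(x),\qquad x=\cos r.
$$
Introduce the linear operator $(\mathcal S f)(x)\doteq -xf'(x)+\nu(\nu+1)f(x)$. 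Then $p'_\nu(\eps)=(\mathcal S P_\nu)(\cos\eps)$. Since $\cos(\pi-\eps)=-\cos\eps$, we also have
$$
p'_\nu(\pi-\eps)=\cos\eps\,P'_\nu(-\cos\eps)+\nu(\nu+1)P_\nu(-\cos\eps).
$$
Set $g(x)\doteq P_\nu(-x)$, so that $g'(x)=-P'_\nu(-x)$. The last display then reads $p'_\nu(\pi-\eps)=(\mathcal S g)(\cos\eps)$.

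\textbf{Step 2: connection formula.} For $\nu\notin\mathbb Z$, rewrite \eqref{explicitQ} as
$$
P_\nu(-x)=\cos(\pi\nu)P_\nu(x)-\tfrac{2}{\pi}\sin(\pi\nu)Q_\nu(x).
$$
Apply the linear operator $\mathcal S$ to both sides and evaluate at $x=\cos\eps$:
$$
p'_\nu(\pi-\eps)=\cos(\pi\nu)\,p'_\nu(\eps)-\tfrac{2}{\pi}\sin(\pi\nu)\,(\mathcal S Q_\nu)(\cos\eps).
$$
Since $-(\mathcal S Q_\nu)(x)=xQ'_\nu(x)-\nu(\nu+1)Q_\nu(x)=R_\nu(x)$, this is exactly \eqref{pprime}.

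\textbf{Step 3: integer $\nu$.} This is the only delicate point, because \eqref{explicitQ} was stated only for $\nu\notin\mathbb Z$. The plan is to argue by continuity in $\nu$. The left-hand side and $\cos(\pi\nu)p'_\nu(\eps)$ are continuous (indeed analytic) in $\nu$, since $P_\nu$ depends analytically on $\nu$ through its hypergeometric representation. Therefore $\frac{2}{\pi}\sin(\pi\nu)R_\nu(\cos\eps)$ has a limit as $\nu$ tends to an integer, and the identity holds for integer $\nu$ with that limit as the meaning of the last term. This limit agrees with the value computed from the standard Ferrers $Q_n$, which is finite, so the term vanishes at integers. In the application only the nonintegral values $\nu_\eps$ near $1$ matter in any case.

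I expect no real obstacle: the whole computation is Step 1 combined with the linearity of $\mathcal S$.
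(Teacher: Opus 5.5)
Your proposal is correct and follows essentially the paper's route: the paper also uses the Legendre equation to reduce $p'_\nu(r)$ to $-xP'_\nu(x)+\nu(\nu+1)P_\nu(x)$, then substitutes the connection formula $P_\nu(-x)=\cos(\pi\nu)P_\nu(x)-\frac{2}{\pi}\sin(\pi\nu)Q_\nu(x)$ and its derivative, which is exactly what applying your operator $\mathcal S$ does. Your Step 3 goes beyond what the paper needs, since the cited connection formula holds for all $\nu$ and only $\nu\in(0,1)$ is used; at integers $n$ it also follows more directly from parity, $P_n(-x)=(-1)^nP_n(x)$.
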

We included a proof of Lemma \ref{apptwo} in Appendix \ref{app2}. We have all the ingredients to prove Theorem \ref{mainasy}.
\subsection{Proof of Theorem \ref{mainasy}} We set $\lambda(\eps)=\eta_{11}(\eps)$ for $D_\eps$ and $\lambda(\eps)=\sigma_{11}(\eps)$ for $\Omega_\eps$. From Lemma \ref{lem_est_eigen} we have that, as $\eps\to 0^{+}$:
\begin{equation}\label{asylambda}
\abs{\lambda(\eps)-2}\leq C\eps^{2}
\end{equation}
where $C>0$ is a universal constant. Recall the definition $\lambda(\eps)=\nu_\eps(\nu_\eps+1)$ (note: $\nu_\eps$ depends on whether we are considering $\eta_{11}(\eps)$ or $\sigma_{11}(\eps)$). We drop from now on the dependence of $\lambda(\eps)$ and $\nu_\eps$ on $\eps$ for simplicity, and write $\lambda,\nu$. Set $\nu=1-\alpha$ so that $\alpha>0$ and 
$\lambda=(1-\alpha)(2-\alpha)$. The fact that $\alpha>0$ (for $\eps$ small) follows since $\lambda<2$, see Lemma \ref{lem_est_eigen}. It is readily seen from \eqref{asylambda} that then
$$
\alpha\leq C\eps^{2}, \quad \abs{1-\nu}\leq C\eps^{2}.
$$
Using the expansion of $P_{\nu}(x)$ in \eqref{asyP} one finds easily that then $p'_{\nu}(\eps)=1+O(\eps^{2})$ and $\cos(\pi\nu)=-1+O(\eps^{2})$.
Hence for the first term in the right-hand side of \eqref{pprime} we  have
$$
\cos(\pi\nu)p'_{\nu}(\eps)=-1+O(\eps^{2}).
$$
About the second term, observe that
$$
\dfrac 2{\pi}\sin(\pi\nu)=\dfrac 2{\pi}\sin(\pi(1-\alpha))=2\alpha+O(\alpha^{3})=2\alpha+O(\eps^{6})
$$
From the expression of $R_{\nu}(x)$ in Lemma \ref{pprime} and from the asymptotics of $Q_{\nu}(x)$ in Lemma \ref{appthree} we obtain:
$$
R_{\nu}(x)=\dfrac{x}{2(1-x)}-\dfrac{\nu(\nu+1)}{4}x\log(1-x)+\dfrac 12\nu(\nu+1)\log(1-x)+O(1).
$$
as $x\to 1^{-}$. Since $\abs{\nu-1}=\abs{\alpha}\leq C\eps^{2}$, and $\cos\eps=1-\frac{\eps^{2}}{2}+O(\eps^{4})$ as $\eps\to 0$, we get
$$
R_{\nu}(\cos\eps)=\dfrac 1{\eps^{2}}+\log\eps+O(1)
$$
as $\eps\to 0$. We conclude that
\begin{equation}\label{conclusion}
\begin{aligned}
p'_{\nu}(\pi-\eps)&=\cos(\pi\nu)p'_{\nu}(\eps)+\frac2{\pi}\sin(\pi\nu)R_{\nu}(\cos\eps)\\
&=-1+\dfrac{2\alpha}{\eps^{2}}+2\alpha\log\eps+O(\eps^{2})
\end{aligned}
\end{equation}
We can now finish the proof. We start from the expansion of $\eta_{11}(\eps)$. The Neumann condition imposes
$$
p'_{\nu}(\pi-\eps)=0.
$$
This implies immediately that 
$
\alpha=\frac{\eps^{2}}{2}+\phi(\eps)
$
where $\phi(\eps)=o(\eps^{2})$. To match the log term, one must have necessarily
$$
\phi(\eps)=-\frac 12\eps^{4}\log\eps+o(\eps^{4}|\log\eps|).
$$
Recalling that $\eta_{11}(\eps)=(1-\alpha)(2-\alpha)$ we get the first part of the theorem:
$$
\eta_{11}(\eps)=2-\frac 32\eps^{2}+\frac 32\eps^{4}\log\eps+o(\eps^{4}|\log\eps|).
$$
Let us now consider the eigenvalue $\sigma_{11}(\eps)$ of the annulus $\Omega_{\eps}$. The Neumann condition imposes 
$$
p'_{\nu}(\eps)-p'_{\nu}(\pi-\eps)=0.
$$
Recall that $p'_{\nu}(\eps)=1+O(\eps^{2})$; for $p'_{\nu}(\pi-\eps)$ we proceed as before, and there is a difference only in the constant term:
$$
p'_{\nu}(\eps)-p'_{\nu}(\pi-\eps)=
-2+\dfrac{2\alpha}{\eps^{2}}+2\alpha\log\eps+O(\eps^{2})
$$
which implies that
$$
\sigma_{11}(\eps)=2-3\eps^{2}+3\eps^{4}\log\eps+o(\eps^{4}|\log\eps|).
$$
This ends the proof.

\appendix
\section{}\label{sec_app}

\subsection{Proof of  \eqref{weightedeigenvalue} in Lemma \ref{radial_pb}}\label{app_weighted}
Let $f$ be a solution of problem \eqref{SL1}:
\begin{equation}\label{SLEQ}
\begin{cases}
-(Gf')'+\frac{4\pi^2}{G}f=\kappa f\,, & {\rm in\ }(0,M)\,,\\
\lim_{a\to 0^+}G(a)f'(a)=f'(M)=0.
\end{cases}
\end{equation}
We perform the change of variable $a=s_q(r)$, where $s_q(r)\doteq\int_{B(0,r)}\rho_q^2$. Set $v(r)=f(s_q(r))$. We have that
$$
v'(r)=f'(s_q(r))s_q'(r)=r f'(s_q(r))\int_0^{2\pi}\rho_q^2(r,\theta)d\theta;
$$
$$
v''(r)=f''(s_q(r))s'_q(r)^2+f'(s_q(r))s_q''(r).
$$
 To simplify the notation, set
$$
m(r):=\int_0^{2\pi}\rho_q^2(r,\theta)d\theta.
$$
In particular, $s_q'(r)=rm(r)$.
Then
$$
f'(s_q(r))=\frac{v'(r)}{r m(r)}
$$
and
$$
f''(s_q(r))=\frac{1}{r^2m(r)^2}(v''(r)-v'(r)\frac{(rm(r))'}{rm(r)}).
$$
The function $G(s_q(r))$ is also easily computed: 
$$
G(s_q(r))=2\pi r^2 m(r)
$$
and then
$$
G'(s_q(r))=\frac{4\pi r m(r)+2\pi r^2m'(r)}{s_q'(r)}=\frac{4\pi r m(r)+2\pi r^2m'(r)}{r m(r)}=4\pi+\frac{2\pi r m'(r)}{m(r)}.
$$
Then replacing everything in the left-hand side of \eqref{SLEQ}
\begin{multline*}
-G'(s_q(r))f'(s_q(r))-G(s_q(r))f''(s_q(r))+\frac{4\pi^2 f(s_q(r))}{G(s_q(r))}\\
=\frac{2\pi}{m(r)}(-v''(r)-\frac{v'(r)}{r}+\frac{v(r)}{r^2})
\end{multline*}
so that the equation reads
$$
-v''(r)-\frac{v'(r)}{r}+\frac{v(r)}{r^2}=\kappa\frac{m(r)}{2\pi}v(r)=\kappa\tilde\rho_q^2(r) v(r)
$$
where
$$
\tilde\rho_q^2(r)=\frac{m(r)}{2\pi}
$$
is the radialization of $\rho_q^2$, which is what we wanted. The boundary condition $f'(M)=0$ translates into $m(1)v'(1)=0$, which implies $v'(1)=0$. On the other hand, since $\lim_{r\to 0^+}s_q(r)f'(s_q(r))=0$, we get that $\lim_{r\to 0^+}rv'(r)=0$. This characterizes the eigenfunctions of the form $u=v(r)e^{\pm i\theta}$ of
$$
\Delta u=\mu \tilde\rho_q^2 u
$$
on $\mathbb D$ with Neumann boundary conditions, or, equivalently by a change of gauge, the radial eigenfunctions of $\Delta_{A_0}u=\mu\tilde\rho^2_q u$ on $\mathbb D$ with Neumann boundary conditions.

\subsection{Proof of Lemma \ref{lem_radial}}\label{app_neumanntosteklov}
We will prove \eqref{condition_convergence} for  $u\in C^{\infty}(\overline{\mathbb D})$. The result will follow by density of $C^{\infty}(\overline{\mathbb D})$ in $W^{1,p}(\mathbb D)$. As we are interested in the behavior as $\abs q\to 1$, we can assume that $|q|>1-1/e\geq \frac 12$, and let 
\begin{align}\label{R}
&R=R(|q|)\doteq 1-\frac{1}{\abs{\log(1-|q|)}},\\
&\omega_1(\abs q)\doteq \|\rho\|_{\infty}^2(1-\abs{q})^2\abs{\log(1-\abs{q})}^3,\\
&\omega_2(|q|)\doteq\frac{2}{\abs{\log(1-|q|)}}.
\end{align}
In what follows, $C_1,C_2,...$ denote positive constants not depending on $q\in\mathbb D$ (but possibly depending on $p>1$ and the volume $M$). Note that $\omega_i(\abs{q})\to 0$ as $\abs{q}\to 1$, $i=1,2$. The proof depends on the following pointwise estimate, which shows that when $q$ is close to the boundary the support of the measure 
$\tilde\rho^2_qdv_E$ concentrates in the strip $R(\abs q)<r< 1$, whose width tends to zero as $\abs q\to 1$.

\begin{lem}\label{applemma} For all $r\in [0,R(\abs q)]$ one has:
$$
\tilde\rho_q^2(r)\leq C_1\omega_1(\abs{q}).
$$
In particular, 
$$
\int_{B(0,R)}\tilde\rho_q^2 udv_E\leq C_1\omega_1(\abs q)\int_{\mathbb D}\abs u dv_E\leq C_1\omega_1(\abs q)\|u\|_{W^{1,1}(\mathbb D)}.
$$
\end{lem}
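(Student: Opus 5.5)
The plan is to bound $\rho_q^2(r,\theta)$ pointwise from the explicit formula \eqref{conf_q} in Lemma \ref{Phiq}, and then average in $\theta$. Since $\rho$ is bounded on $\mathbb D$, I will use $\|\rho\|_\infty$. For $|z|=r\le R$ and $|q|<1$ the formula gives
$$
\rho_q^2(z)\le \|\rho\|_\infty^2\,\frac{(1-|q|^2)^2}{|1+\bar q z|^4}.
$$
The task is therefore to bound the denominator from below, uniformly in $\theta$.

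For the denominator, $|1+\bar q z|\ge 1-|q|r\ge 1-R$ whenever $r\le R$. By the definition \eqref{R}, $1-R=1/|\log(1-|q|)|$. Together with $(1-|q|^2)^2\le 4(1-|q|)^2$ this yields
$$
\rho_q^2(r,\theta)\le 4\|\rho\|_\infty^2(1-|q|)^2|\log(1-|q|)|^4 .
$$
This carries one power of the logarithm more than $\omega_1$, so the crude bound is not enough. I would not use the pointwise bound directly but average over $\theta$, using
$$
\frac1{2\pi}\int_0^{2\pi}\frac{d\theta}{|1+|q|re^{i\theta}|^4}=\frac{1+|q|^2r^2}{(1-|q|^2r^2)^3}.
$$
This is a standard Poisson-kernel-type identity: write $|1+se^{i\theta}|^{-4}$ as $\bigl|\sum_k (k+1)(-s)^ke^{ik\theta}\bigr|^2$ and apply Parseval, which gives $\sum_k (k+1)^2s^{2k}$. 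The right-hand side is at most $2/(1-|q|r)^3\le 2|\log(1-|q|)|^3$. Hence
$$
\tilde\rho_q^2(r)\le 8\|\rho\|_\infty^2(1-|q|)^2|\log(1-|q|)|^3=C_1\omega_1(|q|),
$$
which is exactly the first claim with $C_1=8$. I expect this averaging step to be the main (mild) obstacle. It is the reason the lemma is stated for the radialized weight $\tilde\rho_q^2$ rather than for $\rho_q^2$ itself.

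The integral statement then follows immediately. On $B(0,R)$ we have
$$
\int_{B(0,R)}\tilde\rho_q^2u\,dv_E\le \sup_{r\le R}\tilde\rho_q^2\int_{\mathbb D}|u|\,dv_E\le C_1\omega_1(|q|)\|u\|_{L^1(\mathbb D)},
$$
and $\|u\|_{L^1(\mathbb D)}\le\|u\|_{W^{1,1}(\mathbb D)}$. The assumption $|q|>1-1/e$ is needed only so that $R\ge0$ and the logarithms are at least $1$.
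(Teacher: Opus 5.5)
Your proof is correct and follows the paper's argument: bound $\rho^2$ by $\|\rho\|_\infty^2$ in the formula \eqref{conf_q}, compute the $\theta$-average of $|1+\bar q re^{i\theta}|^{-4}$ explicitly as $(1+|q|^2r^2)/(1-|q|^2r^2)^3$, and then use the definition of $R$. The differences are cosmetic. You use $1-|q|r\ge 1-R=1/|\log(1-|q|)|$ directly, whereas the paper uses monotonicity in $r$ and the slightly weaker bound $1-|q|R\ge 1/(2|\log(1-|q|)|)$, which needs $|q|\ge 1/2$. You also justify the explicit integral via Parseval, which the paper leaves as ``an explicit integration''.
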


\begin{proof} We have by Lemma \ref{Phiq} $ii)$:
\begin{multline*}
\tilde\rho_q^2(r)=\frac{1}{2\pi}\int_0^{2\pi}\rho^2\left(\frac{re^{i\theta}+q}{1+\bar q r e^{i\theta}}\right)\frac{(1-|q|^2)^2}{|1+\bar q r e^{i\theta}|^4}d\theta\\
\leq\frac{\|\rho\|_{\infty}^2}{2\pi}\int_0^{2\pi}\frac{(1-|q|^2)^2}{|1+\bar q r e^{i\theta}|^4}d\theta=\|\rho\|_{\infty}^2\frac{(1-|q|^2)^2(1+r^2|q|^2)}{(1-|q|^2r^2)^3}
\end{multline*}
where the last equality is an explicit integration. The last term, for fixed $q$, is increasing in $r$, so that
$$
\tilde\rho_q^2(r)\leq\|\rho\|_{\infty}^2\frac{(1-|q|^2)^2(1+R^2|q|^2)}{(1-|q|^2R^2)^3}\leq 
2\|\rho\|_{\infty}^2\frac{(1-|q|^2)^2}{(1-|q|^2R^2)^3}\leq C_2\|\rho\|_{\infty}^2\dfrac{(1-\abs{q})^2}
{(1-\abs{q}R)^3}
$$
Now, by the definition of $R$, since $\abs q\geq 1/2$:
$$
1-\abs{q}R=1-\abs q+\frac{\abs q}{\abs{\log(1-\abs q)}}\geq \frac{1}{2\abs{\log(1-\abs q)}}
$$
so that 
$$
\|\rho\|_{\infty}^2\dfrac{(1-\abs{q})^2}{(1-\abs{q}R)^3}\leq 8\|\rho\|_{\infty}^2(1-\abs{q})^2\abs{\log(1-\abs{q})}^3=8\omega_1(\abs q)
$$
which proves the claim.
\end{proof}
Let $\mathbb D_{R,1}$ denote the annulus $\{r:R<r<1\}$. Writing $\mathbb D =B(0,R)\cup\mathbb D_{R,1}$, we have
\begin{equation}\label{part1}
\left|\int_{\mathbb D}\tilde\rho^2_q u\,dv_E-\frac{M}{2\pi}\int_{\partial \mathbb D}u\,ds\right|\\
\leq\left|\int_{B(0,R)}\tilde\rho^2_q u\, dv_E\right|+\left|\int_R^1\int_0^{2\pi}\tilde\rho^2_qr u dr d\theta-\frac{M}{2\pi}\int_0^{2\pi}u(1,\theta)d\theta\right|
\end{equation}

The first summand is bounded above as in Lemma \ref{applemma} , and in particular, by H\"older's inequality
\begin{equation}\label{est1}
\left|\int_{B(0,R)}\tilde\rho^2_q u\, dv_E\right|\leq C_1|\mathbb D|^{\frac{1}{p'}}\omega_1(|q|)\|u\|_{W^{1,p}(\mathbb D)},
\end{equation}
where $p'$ is such that $\frac{1}{p}+\frac{1}{p'}=1$.

\smallskip

Now we consider the second summand of \eqref{part1}. It is convenient to highlight the dependence of $u$ on $(r,\theta)$.
\begin{multline}\label{part2}
\left|\int_R^1\int_0^{2\pi}\tilde\rho^2_qr u dr d\theta-\frac{M}{2\pi}\int_0^{2\pi}u(1,\theta)d\theta\right|\\
=\left|\int_0^{2\pi}u(1,\theta)\left(\int_R^1\tilde\rho^2_q r dr-\frac{M}{2\pi}\right)d\theta+\int_0^{2\pi}\int_R^1\tilde\rho^2_q r\Big(u(r,\theta)-u(1,\theta)\Big)drd\theta\right|\\
\leq \int_0^{2\pi}\left| u(1,\theta)\right|\left|\int_R^1\tilde\rho^2_q r dr-\frac{M}{2\pi}\right|d\theta+\int_0^{2\pi}\int_R^1\tilde\rho^2_q r\abs{u(r,\theta)-u(1,\theta)}drd\theta.
\end{multline}
We start considering the first summand in the third line of \eqref{part2}. First note that,
since the total mass of $\tilde\rho_q^2$ is $M$, we have
$$
\int_R^1\tilde\rho^2_q rdr-\frac{M}{2\pi}=-\frac{1}{2\pi}\int_0^R\int_0^{2\pi}\tilde\rho^2_q r d\theta dr=-\frac{1}{2\pi}\int_{B(0,R)}\tilde\rho_q^2dv_E
$$
and by Lemma \ref{applemma}:
$$
\left|\int_R^1\tilde\rho^2_qrdr-\frac{M}{2\pi}\right|\leq C_2\omega_1(\abs q).
$$
Since 
$$
\int_0^{2\pi}\abs{u(1,\theta)}\,d\theta=\int_{\bd\mathbb D}\abs{u}\leq C_{Tr}\|u\|_{W^{1,1}(\mathbb D)},
$$
where $C_{Tr}$ is the trace constant of $W^{1,1}(\mathbb D)\to L^1(\partial \mathbb D)$,
we conclude that 
\begin{equation*}
\int_0^{2\pi}\left| u(1,\theta)\right|\left|\int_R^1\tilde\rho^2_q r dr-\frac{M}{2\pi}\right|d\theta\leq C_3\omega_1(\abs q)\|u\|_{W^{1,1}(\mathbb D)},
\end{equation*}
which again, by H\"older's inequality, implies
\begin{equation}\label{est2}
\int_0^{2\pi}\left| u(1,\theta)\right|\left|\int_R^1\tilde\rho^2_q r dr-\frac{M}{2\pi}\right|d\theta\leq C_3|\mathbb D|^{\frac{1}{p'}}\omega_1(\abs q)\|u\|_{W^{1,p}(\mathbb D)}.
\end{equation}

It remains to estimate the second summand in the third line of \eqref{part2}. We have that, for all $r\in (R,1)$:
\begin{equation*}
|u(r,\theta)-u(1,\theta)|\leq\int_r^1|\partial_yu(y,\theta)|dy\leq\frac{1}{ R}\int_R^1|du|rdr.
\end{equation*}
and then we see:
\begin{equation}\label{est3}
\begin{aligned}
\int_0^{2\pi}\int_R^1\tilde\rho^2_q r\abs{u(r,\theta)-u(1,\theta)}drd\theta&\leq\int_R^1\tilde\rho^2_qr\left(\int_0^{2\pi}|u(r,\theta)-u(1,\theta)|d\theta\right) dr\\
&\leq\left(\dfrac 1{2\pi}\int_0^{2\pi}\int_R^1\tilde\rho^2_q rdrd\theta\right)\left(\frac{1}{ R}\int_0^{2\pi}\int_R^1|du|rdrd\theta\right)\\
&\leq\frac{M}{2\pi R}\|u\|_{W^{1,1}(\mathbb D_{R,1})}\\
&\leq\frac{M\pi^{\frac{1}{p'}}(1-R^2)^{\frac{1}{p'}}}{2\pi R}\|u\|_{W^{1,p}(\mathbb D)}\\
&= C_4\omega_2(|q|)^{\frac{1}{p'}}\|u\|_{W^{1,p}(\mathbb D)}
\end{aligned}
\end{equation}
because the annulus $\mathbb D_{R,1}$  has Euclidean area $\pi(1-R^2)$ and in the fourth line we use H\"older inequality. Note that, by the definition of $R$, we have
$$
1-R^2\leq \dfrac{2}{\abs{\log(1-\abs q)}}=\omega_2(\abs q).
$$
Recall also that $R=R(|q|)\to 1$ as $|q|\to 1$; as $q>\frac 12$, we see that $R(\abs q)$ is uniformly bounded below. Taking into account \eqref{est1}, \eqref{est2} and \eqref{est3}, the lemma holds with 
$$
\omega_p(|q|)\doteq C_5\omega_1(|q|)+C_6\omega_2(|q|)^{\frac{p}{p-1}},
$$
which tends to zero as $q$ approaches $\bd\mathbb D$.

\subsection{Application of Brouwer fixed point Theorem}\label{fixpoint}

We recall the following well-known application of Brouwer fixed point theorem.

\begin{thm}  Assume that $V:\overline{\mathbb D}\to\mathbb C$ is a continuous vector field such that $\langle V(x),x\rangle<0$ at every point $x$ of the boundary. Then $V$ has at least a zero in $\mathbb D$.
\end{thm}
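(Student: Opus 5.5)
We argue by contradiction, reducing the statement to Brouwer's fixed point theorem. Suppose $V$ has no zero in $\overline{\mathbb D}$. The boundary hypothesis $\langle V(x),x\rangle<0$ already rules out zeros on $\partial\mathbb D$, so the assumption is really that $V$ does not vanish in the interior. We then define the continuous map
$$
F:\overline{\mathbb D}\to\overline{\mathbb D},\qquad F(x)=\frac{V(x)}{|V(x)|}.
$$
It is well defined because $V$ never vanishes, and it is continuous as a quotient of continuous maps with nonvanishing denominator. By Brouwer's theorem, $F$ has a fixed point $x_0\in\overline{\mathbb D}$.

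Next we locate $x_0$. Since $|F(x_0)|=1$ and $F(x_0)=x_0$, we get $|x_0|=1$, so $x_0\in\partial\mathbb D$. Moreover $V(x_0)=|V(x_0)|\,x_0$. Taking the scalar product with $x_0$ gives
$$
\langle V(x_0),x_0\rangle=|V(x_0)|\,|x_0|^2=|V(x_0)|>0.
$$
This contradicts the hypothesis $\langle V(x_0),x_0\rangle<0$ at the boundary point $x_0$. Hence $V$ must vanish somewhere, and since it cannot vanish on $\partial\mathbb D$, the zero lies in $\mathbb D$.

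There is no genuine obstacle here; the only point needing care is that the normalization $V/|V|$ requires nonvanishing on all of $\overline{\mathbb D}$, and the boundary sign condition supplies exactly that on $\partial\mathbb D$. In the application, Theorem \ref{final} gives $V(q)=-\sqrt M q$ on $\partial\mathbb D$, so $\langle V(q),q\rangle=-\sqrt M<0$ there and the statement applies.
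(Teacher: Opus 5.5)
Your proof is correct, and it takes a different route from the paper's.

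The paper argues directly. It sets $F(x)=x+\eps V(x)$, uses compactness of $\partial\mathbb D$ to get a margin $\langle V(x),x\rangle\le-\delta$ there together with a bound $|V|\le M$, and shows $|F(x)|^2\le 1-2\eps\delta+\eps^2M^2<1$ on $\partial\mathbb D$ for small $\eps$. A fixed point of $F$ is then read off directly as a zero of $V$.

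You instead argue by contradiction with the normalized field $V/|V|$. Its image lies on the unit circle, so any fixed point is forced onto $\partial\mathbb D$, where $V$ would point outward. This buys you two things:
\begin{itemize}
\item You need no quantitative estimates at all: no $\delta$, no $M$, no choice of $\eps$.
\item The argument works under the weaker hypothesis $\langle V(x),x\rangle\le 0$ on $\partial\mathbb D$, giving a zero in $\overline{\mathbb D}$. The strict inequality is used only at the end, to push that zero into $\mathbb D$.
\end{itemize}

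Your route also avoids a point the paper's write-up passes over. The estimate there only shows $F(\partial\mathbb D)\subset\mathbb D$, whereas Brouwer's theorem needs $F(\overline{\mathbb D})\subset\overline{\mathbb D}$. Closing this requires an extra step, for example either of the following:
\begin{itemize}
\item Use uniform continuity to keep $\langle V(x),x\rangle\le-\delta/2$ on an annulus $1-\eta\le|x|\le 1$, and take $\eps<\min(\delta/M^2,\eta/M)$ so that the inner part is also handled.
\item Compose $F$ with the radial retraction onto $\overline{\mathbb D}$; a fixed point of the composition on $\partial\mathbb D$ is then ruled out by the boundary estimate.
\end{itemize}

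Your application to $V(q)=-\sqrt M\,q$ is also correct, since $\langle V(q),q\rangle=-\sqrt M|q|^2=-\sqrt M<0$ on $\partial\mathbb D$.
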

\begin{proof} Fix $\eps>0$ and consider the map $F:\overline{\mathbb D}\to\mathbb C$ given by
$$
F(x)=x+\eps V(x).
$$
We have:
$$
\abs{F(x)}^2=\abs{x}^2+2\eps\scal{x}{V(x)}+\eps^2\abs{V(x)}^2.
$$
By assumption, there is $\delta>0$ such that, on $\bd\mathbb D$
$$
\scal{V(x)}{x}\leq -\delta;
$$
let also $M=\max_{\overline{\mathbb D}}|V|$. Then, on $\bd{\mathbb D}$ on has:
$$
\abs{F(x)}^2\leq 1-2\eps\delta+\eps^2M^2.
$$
If $\eps<\frac{2\delta}{M^2}$ we see that $\abs{F(x)}^2<1$ and then $F:\overline{\mathbb D}\to\mathbb D$. By Brouwer fixed point theorem, $F$ has a fixed point $x_0\in\mathbb D$ ($F(x_0)=x_0$) and then
$V(x_0)=0$.
\end{proof}

\subsection{Proof of Lemma \ref{lem_first_annulus}}\label{app:lemma_radial_eig}
We recall the min-max principle for the eigenvalues of \eqref{ODE_annulus_k}:
\begin{equation}\label{minmax_k}
\sigma_{kj}=\min_{\substack{V\subset H^1(\eps,\pi-\eps)\\{\rm dim}\,V=j}}\max_{0\ne v\in V}\frac{\int_\eps^{\pi-\eps}\left(\sin(r)v(r)'^2+\frac{k^2}{\sin(r)}v(r)^2\right)dr}{\int_\eps^{\pi-\eps}v(r)^2\sin(r)dr}.
\end{equation}
We have that $\mu_1(\Omega_\eps)=\sigma_{01}=0$ with constant eigenfunctions. Hence, from the monotonicity with respect to $k\in\mathbb Z$ of the Rayleigh in \eqref{minmax_k}, we conclude that
$$
\mu_2(\Omega_\eps)=\min\{\sigma_{02}(\eps),\sigma_{11}(\eps)\},
$$
hence the proof is concluded provided
$$
\sigma_{11}(\eps)<\sigma_{02}(\eps)
$$
for all $\eps\in(0,\pi/2)$. For simplicity, through the rest of the proof, we drop the dependence on $\eps$ and write $\sigma_{kj}$ and $v_{kj}$ for the eigenvalues and eigenfunctions of \eqref{ODE_annulus_k}.
Let $v_{02}$ be an eigenfunction associated with $\sigma_{02}$. Set
$$
g_{02}:=v_{02}'.
$$
Then one checks that
$$
-g_{02}''(r)-\cot(r)g_{02}'(r)+\frac{1}{\sin^2(r)}g_{02}(r)=\sigma_{02} g_{02}(r),
$$
and this follows by differentiating the differential equation in \eqref{ODE_annulus_k} with $k=0$, $v=v_{02}$, $\sigma=\sigma_{02}$. At the endpoints we have
$$
g_{02}(\eps)=g_{02}(\pi-\eps)=0.
$$
This means that $\sigma_{02}$ is an eigenvalue of the following Dirichlet problem:
\begin{equation}\label{ODE_annulus_1_2}
\begin{cases}
-v''(r)-\cot(r)v'(r)+\frac{1}{\sin^2(r)}v(r)=\lambda v(r)\,, & {\rm in\ }(\eps,\pi-\eps)\,,\\
v(\eps)=v(\pi-\eps)=0.
\end{cases}
\end{equation}
In particular
$$
\sigma_{02}\geq\lambda_{11}
$$
where $\lambda_{11}$ is the first eigenvalue of \eqref{ODE_annulus_1_2}. Recall that $\sigma_{11}$ is the first eigenvalue of the Neumann problem:
\begin{equation}
\begin{cases}
-v''(r)-\cot(r)v'(r)+\frac{1}{\sin^2(r)}v(r)=\mu v(r)\,, & {\rm in\ }(\eps,\pi-\eps)\,,\\
v'(\eps)=v'(\pi-\eps)=0.
\end{cases}
\end{equation}
The proof is concluded observing that:
$$
\sigma_{02}\geq\lambda_{11}>\sigma_{11}.
$$
\qed

\subsection{Proof of Lemma \ref{appthree}}\label{app_Q}

 Given $P_{\nu}(x)$, the Frobenius method gives a second linearly independent solution, (see \cite[2.7.6]{nist}) of the form
$$
w(x)+P_{\nu}(x)\log (1-x),
$$
where $w(x)$ is analytic in $x=1$ and $w(x)=O(1-x)$ as $x\to 1^{-}$. Therefore, the general solution is 
$$
A\Big(w(x)+P_{\nu}(x)\log (1-x)\Big)+BP_{\nu}(x),
$$
with $A,B\in\mathbb R$. The definition \eqref{explicitQ} of $Q_\nu$ fixes the constants $A,B$. More precisely, $Q_\nu$ is the unique solution which behaves as follows (see \cite[14.8.3]{nist}):
$$
Q_\nu(x)=-\frac{1}{2}\log(1-x)+c(\nu)+O((1-x)|\log(1-x)|)
$$
as $x\to 1^-$. Here $c(\nu)=\frac{1}{2}\log(2)-\gamma-\psi(\nu+1)$, where $\gamma$ is Euler's constant and $\psi(z)\doteq\frac{\Gamma'(z)}{\Gamma(z)}$. Hence $A=-\frac 12$ and $B=c(\nu)$. Now the conclusion follows straightforwardly by using the expansion of $P_\nu(x)$ in \eqref{asyP}. \qed

\subsection{Proof of Lemma \ref{apptwo}}\label{app2}

Recall that, for $r\in (0,\pi)$ we have, by definition:
$$
p_{\nu}(r)=\sin r P'_{\nu}(\cos r).
$$
Differentiating with respect to $r$, setting $x=\cos r$ and using the identity defining $P_{\nu}$:
$$
(1-x^{2})P''_{\nu}(x)-2xP'_{\nu}(x)+\nu(\nu+1)P_{\nu}(x)=0
$$
we get 
\begin{equation}\label{appzero}
p'_{\nu}(r)=-xP'_{\nu}(x)+\nu(\nu+1)P_{\nu}(x).
\end{equation}
Since $\cos(\pi-r)=-x$ we get:

\begin{equation}\label{appfirst}
p'_{\nu}(\pi-r)=xP'_{\nu}(-x)+\nu(\nu+1)P_{\nu}(-x).
\end{equation}
From \cite[14.9.10]{nist} we have
\begin{equation}\label{appsecond}
P_{\nu}(-x)=\cos(\pi\nu)P_{\nu}(x)-\frac 2{\pi}\sin(\pi\nu)Q_{\nu}(x)
\end{equation}
 hence
 \begin{equation}\label{appthird}
P'_{\nu}(-x)=-\cos(\pi\nu)P'_{\nu}(x)+\frac 2{\pi}\sin(\pi\nu)Q'_{\nu}(x)
\end{equation}
Substituting \eqref{appsecond} and  \eqref{appthird} in \eqref {appfirst} and taking into account \eqref{appzero} we get the final identity.\qed

\begin{figure}
     \centering

     \begin{subfigure}[]{0.48\textwidth}
         \centering
         \includegraphics[width=\textwidth]{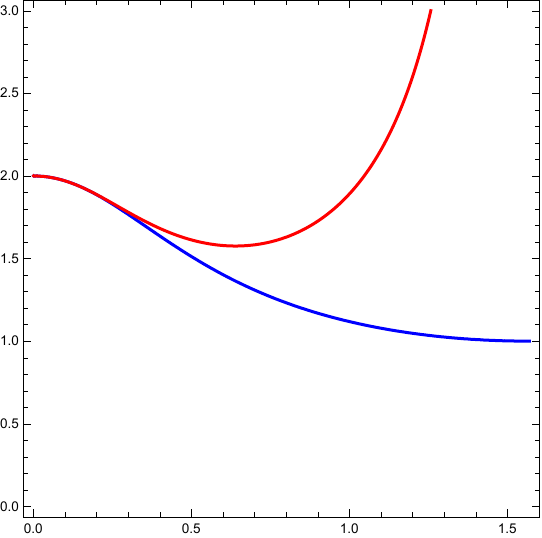}
         \caption{{\color{red}$\mu_2(\Omega^\star_\eps)$} and {\color{blue}$\mu_2(\Omega_\eps)$}}
         \label{A}
     \end{subfigure}
     \hfill
     \begin{subfigure}[]{0.48\textwidth}
         \centering
         \includegraphics[width=\textwidth]{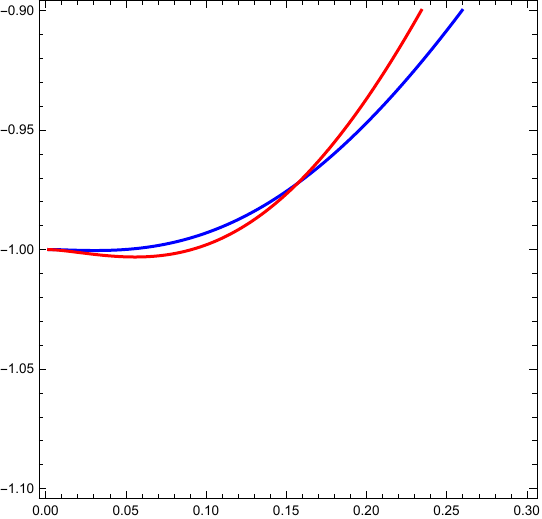}
         \caption{{\color{red}$\frac{\mu_2(\Omega^\star_\eps)-2}{3\eps^2}$} and {\color{blue}$\frac{\mu_2(\Omega_\eps)-2}{3\eps^2}$}}
         \label{B}
        
     \end{subfigure}
        \caption{The intersection of the two branches in Figure \ref{mathematica} (B) is numerically computed to happen at $\eps\approx 0.1571$, which corresponds to an area of about 98.77\% that of $\mathbb S^2$.}
        \label{mathematica}
\end{figure}

\bibliography{bibliography.bib}

@book{nist,
 editor = {Olver, Frank W. J. and Lozier, Daniel W. and Boisvert, Ronald F. and Clark, Charles W.},
 title = {{NIST} handbook of mathematical functions},
 isbn = {978-0-521-19225-5; 978-0-521-14063-8},
 year = {2010},
 publisher = {Cambridge: Cambridge University Press},
 language = {English},
 zbMATH = {5765058},
 Zbl = {1198.00002}
}

@article{MPS25,
    AUTHOR = {Michetti, Marco and Provenzano, Luigi and Savo, Alessandro},
      TITLE = {Isoperimetric inequalities and sharp upper bounds for {A}haronov-{B}ohm eigenvalues on surfaces},
   JOURNAL = {	arXiv:2604.11718},
  FJOURNAL = {	arXiv:2604.11718},
      YEAR = {2026},
}

@article {lang_laug_robin,
    AUTHOR = {Langford, Jeffrey J. and Laugesen, Richard S.},
     TITLE = {Maximizing the second {R}obin eigenvalue of simply connected
              curved membranes},
   JOURNAL = {Comput. Methods Funct. Theory},
  FJOURNAL = {Computational Methods and Function Theory},
    VOLUME = {25},
      YEAR = {2025},
    NUMBER = {1},
     PAGES = {83--117},
      ISSN = {1617-9447,2195-3724},
   MRCLASS = {35P15 (35J25 35R01 58J50)},
  MRNUMBER = {4887603},
MRREVIEWER = {Srinivasan\ Kesavan},
       DOI = {10.1007/s40315-023-00516-1},
       URL = {https://doi.org/10.1007/s40315-023-00516-1},
}

@article {GKL,
    AUTHOR = {Girouard, Alexandre and Karpukhin, Mikhail and Lagac\'e, Jean},
     TITLE = {Continuity of eigenvalues and shape optimisation for {L}aplace
              and {S}teklov problems},
   JOURNAL = {Geom. Funct. Anal.},
  FJOURNAL = {Geometric and Functional Analysis},
    VOLUME = {31},
      YEAR = {2021},
    NUMBER = {3},
     PAGES = {513--561},
      ISSN = {1016-443X,1420-8970},
   MRCLASS = {49R05 (35P05 46E35 49Q10)},
  MRNUMBER = {4311579},
MRREVIEWER = {Andrzej\ M.\ My\'sli\'nski},
       DOI = {10.1007/s00039-021-00573-5},
       URL = {https://doi.org/10.1007/s00039-021-00573-5},
}

@article {CLPS25,
    AUTHOR = {Colbois, Bruno and L\'ena, Corentin and Provenzano, Luigi and
              Savo, Alessandro},
     TITLE = {A reverse {F}aber-{K}rahn inequality for the magnetic
              {L}aplacian},
   JOURNAL = {J. Math. Pures Appl. (9)},
  FJOURNAL = {Journal de Math\'ematiques Pures et Appliqu\'ees. Neuvi\`eme
              S\'erie},
    VOLUME = {192},
      YEAR = {2024},
     PAGES = {Paper No. 103632, 33},
      ISSN = {0021-7824,1776-3371},
   MRCLASS = {35P15 (35J25 49Q10 49R05 81Q10)},
  MRNUMBER = {4824640},
       DOI = {10.1016/j.matpur.2024.103632},
       URL = {https://doi.org/10.1016/j.matpur.2024.103632},
}

@article {bucur_gafa,
    AUTHOR = {Bucur, Dorin and Laugesen, Richard S. and Martinet, Eloi and
              Nahon, Micka\"el},
     TITLE = {Spherical caps do not always maximize {N}eumann eigenvalues on
              the sphere},
   JOURNAL = {Geom. Funct. Anal.},
  FJOURNAL = {Geometric and Functional Analysis},
    VOLUME = {35},
      YEAR = {2025},
    NUMBER = {5},
     PAGES = {1313--1345},
      ISSN = {1016-443X,1420-8970},
   MRCLASS = {35P15 (58J50)},
  MRNUMBER = {4990469},
       DOI = {10.1007/s00039-025-00721-1},
       URL = {https://doi.org/10.1007/s00039-025-00721-1},
}

@article {laugesen,
    AUTHOR = {Langford, Jeffrey J. and Laugesen, Richard S.},
     TITLE = {Maximizers beyond the hemisphere for the second {N}eumann
              eigenvalue},
   JOURNAL = {Math. Ann.},
  FJOURNAL = {Mathematische Annalen},
    VOLUME = {386},
      YEAR = {2023},
    NUMBER = {3-4},
     PAGES = {2255--2281},
      ISSN = {0025-5831,1432-1807},
   MRCLASS = {35P15 (58J50)},
  MRNUMBER = {4612417},
MRREVIEWER = {Panagiotis\ Polymerakis},
       DOI = {10.1007/s00208-022-02455-z},
       URL = {https://doi.org/10.1007/s00208-022-02455-z},
}

@article {bandle2,
    AUTHOR = {Bandle, Catherine},
     TITLE = {Isoperimetric inequality for some eigenvalues of an
              inhomogeneous, free membrane},
   JOURNAL = {SIAM J. Appl. Math.},
  FJOURNAL = {SIAM Journal on Applied Mathematics},
    VOLUME = {22},
      YEAR = {1972},
     PAGES = {142--147},
      ISSN = {0036-1399},
   MRCLASS = {35P15 (52A40 73.35)},
  MRNUMBER = {313648},
MRREVIEWER = {D.\ O.\ Banks},
       DOI = {10.1137/0122016},
       URL = {https://doi.org/10.1137/0122016},
}

@article {CPS22,
    AUTHOR = {Colbois, Bruno and Provenzano, Luigi and Savo, Alessandro},
     TITLE = {Isoperimetric inequalities for the magnetic {N}eumann and
              {S}teklov problems with {A}haronov-{B}ohm magnetic potential},
   JOURNAL = {J. Geom. Anal.},
  FJOURNAL = {Journal of Geometric Analysis},
    VOLUME = {32},
      YEAR = {2022},
    NUMBER = {11},
     PAGES = {Paper No. 285, 38},
      ISSN = {1050-6926,1559-002X},
   MRCLASS = {35J10 (35P15 49R05 58J50 81Q10)},
  MRNUMBER = {4478480},
       DOI = {10.1007/s12220-022-01001-2},
       URL = {https://doi.org/10.1007/s12220-022-01001-2},
}

@article {martinet_sphere,
    AUTHOR = {Martinet, Eloi},
     TITLE = {Numerical optimization of {N}eumann eigenvalues of domains in
              the sphere},
   JOURNAL = {J. Comput. Phys.},
  FJOURNAL = {Journal of Computational Physics},
    VOLUME = {508},
      YEAR = {2024},
     PAGES = {Paper No. 113002, 26},
      ISSN = {0021-9991,1090-2716},
   MRCLASS = {65N25},
  MRNUMBER = {4733245},
       DOI = {10.1016/j.jcp.2024.113002},
       URL = {https://doi.org/10.1016/j.jcp.2024.113002},
}

@article {bucur_sphere_2,
    AUTHOR = {Bucur, Dorin and Martinet, Eloi and Nahon, Micka\"el},
     TITLE = {Sharp inequalities for {N}eumann eigenvalues on the sphere},
   JOURNAL = {J. Differential Geom.},
  FJOURNAL = {Journal of Differential Geometry},
    VOLUME = {131},
      YEAR = {2025},
    NUMBER = {1},
     PAGES = {43--63},
      ISSN = {0022-040X,1945-743X},
   MRCLASS = {35P15},
  MRNUMBER = {4947549},
MRREVIEWER = {Richard\ S.\ Laugesen},
       DOI = {10.4310/jdg/1755542131},
       URL = {https://doi.org/10.4310/jdg/1755542131},
}

@book {FH_book,
    AUTHOR = {Fournais, S\oren and Helffer, Bernard},
     TITLE = {Spectral methods in surface superconductivity},
    SERIES = {Progress in Nonlinear Differential Equations and their
              Applications},
    VOLUME = {77},
 PUBLISHER = {Birkh\"{a}user Boston, Inc., Boston, MA},
      YEAR = {2010},
     PAGES = {xx+324},
      ISBN = {978-0-8176-4796-4},
   MRCLASS = {35-02 (35P15 35Q56 47F05 47N50 49N60 82D55)},
  MRNUMBER = {2662319},
MRREVIEWER = {Yuri A. Kordyukov},
}

@article {szego_1,
    AUTHOR = {Szeg\"{o}, G.},
     TITLE = {Inequalities for certain eigenvalues of a membrane of given
              area},
   JOURNAL = {J. Rational Mech. Anal.},
  FJOURNAL = {Journal of Rational Mechanics and Analysis},
    VOLUME = {3},
      YEAR = {1954},
     PAGES = {343--356},
      ISSN = {1943-5282},
   MRCLASS = {36.0X},
  MRNUMBER = {61749},
MRREVIEWER = {E. T. Copson},
       DOI = {10.1512/iumj.1954.3.53017},
       URL = {https://doi.org/10.1512/iumj.1954.3.53017},
}

@article {dallaprovenzano,
    AUTHOR = {Dalla Riva, Matteo and Provenzano, Luigi},
     TITLE = {On vibrating thin membranes with mass concentrated near the
              boundary: an asymptotic analysis},
   JOURNAL = {SIAM J. Math. Anal.},
  FJOURNAL = {SIAM Journal on Mathematical Analysis},
    VOLUME = {50},
      YEAR = {2018},
    NUMBER = {3},
     PAGES = {2928--2967},
      ISSN = {0036-1410,1095-7154},
   MRCLASS = {35J25 (35B25 35C20 35P05 70Jxx 74K15)},
  MRNUMBER = {3813233},
MRREVIEWER = {Mario\ M.\ Coclite},
       DOI = {10.1137/17M1118221},
       URL = {https://doi.org/10.1137/17M1118221},
}

@preamble{
   "\def\cprime{$'$} "
}

@article {lambertiprovenzano2,
    AUTHOR = {Lamberti, Pier Domenico and Provenzano, Luigi},
     TITLE = {Neumann to {S}teklov eigenvalues: asymptotic and monotonicity
              results},
   JOURNAL = {Proc. Roy. Soc. Edinburgh Sect. A},
  FJOURNAL = {Proceedings of the Royal Society of Edinburgh. Section A.
              Mathematics},
    VOLUME = {147},
      YEAR = {2017},
    NUMBER = {2},
     PAGES = {429--447},
      ISSN = {0308-2105,1473-7124},
   MRCLASS = {35J05 (33C10 35B25 35C20 35J25 35P15)},
  MRNUMBER = {3627957},
       DOI = {10.1017/S0308210516000214},
       URL = {https://doi.org/10.1017/S0308210516000214},
}

@book {chavel,
    AUTHOR = {Chavel, Isaac},
     TITLE = {Eigenvalues in {R}iemannian geometry},
    SERIES = {Pure and Applied Mathematics},
    VOLUME = {115},
      NOTE = {Including a chapter by Burton Randol,
              With an appendix by Jozef Dodziuk},
 PUBLISHER = {Academic Press, Inc., Orlando, FL},
      YEAR = {1984},
     PAGES = {xiv+362},
      ISBN = {0-12-170640-0},
   MRCLASS = {58G25 (35P99 53C20)},
  MRNUMBER = {768584},
MRREVIEWER = {G{\'e}rard Besson},
}

@book {kato1,
    AUTHOR = {Kato, Tosio},
     TITLE = {Perturbation theory for linear operators},
   EDITION = {Second},
      NOTE = {Grundlehren der Mathematischen Wissenschaften, Band 132},
 PUBLISHER = {Springer-Verlag, Berlin-New York},
      YEAR = {1976},
     PAGES = {xxi+619},
   MRCLASS = {47-XX},
  MRNUMBER = {0407617 (53 \#11389)},
}

@incollection{lambertiprovenzano1,
year={2015},
isbn={978-3-319-12576-3},
booktitle={Current Trends in Analysis and Its Applications},
series={Trends in Mathematics},
editor={Mityushev, Vladimir V. and Ruzhansky, Michael V.},
doi={10.1007/978-3-319-12577-0_21},
title={Viewing the {S}teklov Eigenvalues of the {L}aplace Operator as Critical {N}eumann Eigenvalues},
url={http://dx.doi.org/10.1007/978-3-319-12577-0_21},
publisher={Springer International Publishing},
author={Lamberti, PierDomenico and Provenzano, Luigi},
pages={171-178},
language={English}
}

@article {weinberger,
    AUTHOR = {Weinberger, H. F.},
     TITLE = {An isoperimetric inequality for the {$N$}-dimensional free
              membrane problem},
   JOURNAL = {J. Rational Mech. Anal.},
    VOLUME = {5},
      YEAR = {1956},
     PAGES = {633--636},
   MRCLASS = {52.0X},
  MRNUMBER = {0079286 (18,63c)},
MRREVIEWER = {P. Funk},
}

@article {ash_beng,
    AUTHOR = {Ashbaugh, Mark S. and Benguria, Rafael D.},
     TITLE = {Sharp upper bound to the first nonzero {N}eumann eigenvalue
              for bounded domains in spaces of constant curvature},
   JOURNAL = {J. London Math. Soc. (2)},
  FJOURNAL = {Journal of the London Mathematical Society. Second Series},
    VOLUME = {52},
      YEAR = {1995},
    NUMBER = {2},
     PAGES = {402--416},
      ISSN = {0024-6107},
   MRCLASS = {35P15 (35J05 58G25)},
  MRNUMBER = {1356151},
MRREVIEWER = {Karl-Josef Witsch},
       DOI = {10.1112/jlms/52.2.402},
       URL = {https://doi.org/10.1112/jlms/52.2.402},
}

@article {chavel_isop,
    AUTHOR = {Chavel, Isaac and Feldman, Edgar A.},
     TITLE = {Isoperimetric inequalities on curved surfaces},
   JOURNAL = {Adv. in Math.},
  FJOURNAL = {Advances in Mathematics},
    VOLUME = {37},
      YEAR = {1980},
    NUMBER = {2},
     PAGES = {83--98},
      ISSN = {0001-8708},
   MRCLASS = {53C65 (49G05)},
  MRNUMBER = {591721},
MRREVIEWER = {R.\ Osserman},
       DOI = {10.1016/0001-8708(80)90028-6},
       URL = {https://doi.org/10.1016/0001-8708(80)90028-6},
}

@article {Da,
    AUTHOR = {Daners, Daniel},
     TITLE = {A {F}aber-{K}rahn inequality for {R}obin problems in any space
              dimension},
   JOURNAL = {Math. Ann.},
  FJOURNAL = {Mathematische Annalen},
    VOLUME = {335},
      YEAR = {2006},
    NUMBER = {4},
     PAGES = {767--785},
      ISSN = {0025-5831},
   MRCLASS = {35P15 (35J25)},
  MRNUMBER = {2232016},
MRREVIEWER = {Siegfried Carl},
       DOI = {10.1007/s00208-006-0753-8},
       URL = {https://doi.org/10.1007/s00208-006-0753-8},
}

@article {lan_lau,
    AUTHOR = {Langford, Jeffrey J. and Laugesen, Richard S.},
     TITLE = {Maximizers beyond the hemisphere for the second {N}eumann
              eigenvalue},
   JOURNAL = {Math. Ann.},
  FJOURNAL = {Mathematische Annalen},
    VOLUME = {386},
      YEAR = {2023},
    NUMBER = {3-4},
     PAGES = {2255--2281},
      ISSN = {0025-5831},
   MRCLASS = {35P15 (58J50)},
  MRNUMBER = {4612417},
MRREVIEWER = {Panagiotis Polymerakis},
       DOI = {10.1007/s00208-022-02455-z},
       URL = {https://doi.org/10.1007/s00208-022-02455-z},
}
\bibliographystyle{abbrv}

\end{document}